\theoremstyle{plain}
\newtheorem{theorem}{Theorem}[section]
\newtheorem{lemma}[theorem]{Lemma}
\newtheorem{corollary}[theorem]{Corollary}
\newtheorem{proposition}[theorem]{Proposition}
\theoremstyle{definition}
\newtheorem{definition}[theorem]{Definition}
\newtheorem{remark}[theorem]{Remark}
\let\phi\varphi
\let\epsilon\varepsilon
\newcommand{\norm}[1]{\Vert#1\Vert}
\newcommand{\bignorm}[1]{\bigl\Vert#1\bigr\Vert}
\newcommand{\Bignorm}[1]{\Bigl\Vert#1\Bigr\Vert}
\newcommand{\decnorm}[1]{\Vert#1\Vert_{dec}}
 \newcommand{\A}{\mbox{${\mathcal A}$}}
   \newcommand{\B}{\mbox{${\mathcal B}$}}
   \newcommand{\D}{\mbox{${\mathcal D}$}}
   \renewcommand{\H}{\mbox{${\mathcal H}$}}
    \newcommand{\M}{\mbox{${\mathcal M}$}}
   \newcommand{\N}{\mbox{${\mathcal N}$}}
\newcommand{\Ndb}{\ensuremath{\mathbb{N}}}
\newcommand{\Cdb}{\ensuremath{\mathbb{C}}}
\begin{document}
\title{On factorization of separating maps on noncommutative $L^p$-spaces}

\author[C. Le Merdy]{Christian Le Merdy$^1$}

\address{Laboratoire de Math\'ematiques de Besan\c{c}on,
Universite Bourgogne Franche-Comt\'e, France}
\email{\texttt{christian.lemerdy@univ-fcomte.fr}}

\author[S. Zadeh]{Safoura Zadeh}
\address{Westf\"{a}lische Wilhelms-Universit\"{a}t M\"{u}nster, 
M\"{u}nster, Germany\,\&\newline
\indent 
Max-Planck-Institut f\"{u}r Mathematik,
Vivatsgasse 7, 53111, Bonn, Germany} 
\email{\texttt{jsafoora@gmail.com}}

\keywords{Noncommutative $L^p$-spaces, isometries, tensor products,
completely positive maps.}

\subjclass[2000]{}
  
\begin{abstract} 
For any semifinite von Neumann algebra
$\M$ and any $1\leq p<\infty$, we introduce 
a natutal $S^1$-valued noncommutative
$L^p$-space $L^p(\M;S^1)$.
We say that a bounded  map
$T\colon L^p(\M)\to L^p(\N)$ is 
$S^1$-bounded (resp. $S^1$-contractive)
if $T\otimes I_{S^1}$ extends to a 
bounded (resp. contractive) map $T\overline{\otimes} I_{S^1}$
from $ L^p(\M;S^1)$ into $L^p(\N;S^1)$.
We show that any completely positive map
is $S^1$-bounded, with 
$\norm{T\overline{\otimes} I_{S^1}}=\norm{T}$.
We use the above as a tool to investigate the separating maps
$T\colon L^p(\M)\to L^p(\N)$ which admit a direct Yeadon
type factorization, that is, maps for which
there exist a $w^*$-continuous $*$-homomorphism 
$J\colon\M\to\N$,
a partial isometry $w\in\N$ and a 
positive operator $B$ affiliated
with $\N$ such that $w^*w=J(1)=s(B)$,
$B$ commutes with the range of $J$, and 
$T(x)=wBJ(x)$ for any $x\in \M\cap L^p(\M)$.
Given a separating isometry $T\colon L^p(\M)\to L^p(\N)$,
we show that $T$ is $S^1$-contractive
if and only if it  admits a direct Yeadon
type factorization. We 
further show that if
$p\not=2$, the above holds true if and only if 
$T$ is completely contractive.
\end{abstract}

\date{\today}

\maketitle

\section{Introduction}\label{sec1}
Let $\M,\N$ be two semifinite von Neumann algebras. For 
any $1\leq p<\infty$,
let $L^p(\M)$ and $L^p(\N)$ denote their
associated noncommutative $L^p$-spaces. A bounded map 
$T\colon L^p(\M)\to  L^p(\N)$
is called separating if for any 
$x,y\in L^p(\M)$ such that $x^*y=xy^*=0$, we have
$T(x)^*T(y)=T(x)T(y)^*=0$. Separating maps are a noncommutative 
analog of Lamperti operators, that is, operators 
on classical (=commutative) $L^p$-spaces preserving disjoint supports.
We refer  to \cite{Lam, Kan, Are, Pel} 
for information and deep results on Lamperti operators.

In the noncommutative setting, pairs $(x,y)$
such that $x^*y=xy^*=0$ were first considered on Schatten classes $S^p$
in \cite{Ara}, as a tool to describe onto surjective isometries on 
$S^p$ for $1\leq p\not=2 <\infty$. Later on, separating maps were used
either implicitly or explicitly, and with different names, 
in \cite{AF1,AF2} (see also \cite{Li}) and in Yeadon's paper \cite{Y}
providing a full description of isometries $L^p(\M)\to L^p(\N)$,
for $1\leq p\not=2 <\infty$.

Recently the two authors \cite{LMZ} and, independently, 
G. Hong, S. K. Ray and S. Wang \cite{HRW} established 
the following characterization property. A bounded map
$T\colon L^p(\M)\to L^p(\N)$ is separating if and only if 
there exist a $w^*$-continuous Jordan homomorphism $J\colon\M\to\N$,
a partial isometry $w\in\N$ and a positive operator $B$ affiliated
with $\N$ such that $w^*w=J(1)=s(B)$, the support of $B$,
$B$ commutes with the range of $J$, and 
\begin{equation}\label{Yeadon}
T(x)=wBJ(x),\qquad x\in \M\cap L^p(\M).
\end{equation}
This remarkable factorization property was discovered
by Yeadon in the above mentioned paper. Indeed he showed in \cite{Y}
that for $p\not=2$, any linear isometry
$T\colon L^p(\M)\to L^p(\N)$ is separating 
and further admits a factorization of the type (\ref{Yeadon}).
In reference to this seminal work, we call
(\ref{Yeadon}) a Yeadon type factorization of $T$.
It turns out that if $T$ is separating, the triple
$(w,B,J)$ in its Yeadon type factorization is unique.

We note that analogs of separating maps are currently investigated 
in other settings. On the one hand, they are used on 
general noncommutative functions spaces, in order to obtain a Yeadon type
description of isometries on a large class of such spaces \cite{HSZ}. 
On the other hand, they are investigated in operator algebras 
(the case $p=\infty$)
and play a fundamental role in the classification
of nuclear $C^*$-algebras, see \cite{W} and the references therein.

The present paper looks at separating maps $T\colon L^p(\M)\to L^p(\N)$ 
for which the Jordan homomorphism $J$ in the 
Yeadon type factorization is actually a $*$-homomorphism
(equivalently, is multiplicative).
We say that $T$ has a direct Yeadon type factorization
in this case.
The first motivation for considering this notion
is a result by M. Junge, D. Sherman and Z.-J. Ruan 
\cite[Proposition 3.2]{JRS}
which asserts that for $p\not=2$, a linear isometry
$L^p(\M)\to L^p(\N)$ is a complete isometry
if and only if it has a direct Yeadon type factorization. 
The second motivation is the $L^2$-case. In \cite[Theorem 4.2]{LMZ},
we proved that an isometry $T\colon L^2(\M)\to L^2(\N)$
is separating (equivalently, has a Yeadon type factorization)
if and only if $T\otimes I_{\ell^1}$ extends to a 
contractive map $L^2(\M;\ell^1)\to L^2(\N;\ell^1)$. Here
$L^2(\M;\ell^1)$ and $L^2(\N;\ell^1)$ denote Junge's $\ell^1$-valued
non commutative $L^2$-spaces from \cite{J}. 

We introduce $S^1$-valued noncommutative $L^p$-spaces $L^p(\M;S^1)$,
which naturally extend previous constructions from \cite{J,P5}. 
We say that a bounded map $T\colon L^p(\M)\to L^p(\N)$ is 
$S^1$-bounded (resp. $S^1$-contractive)
if $T\otimes I_{S^1}$ extends to a 
bounded (resp. contractive) map
$$
T\overline{\otimes} I_{S^1}
\colon L^p(\M;S^1)\longrightarrow L^p(\N;S^1).
$$
When $\M,\N$ are hyperfinite, $S^1$-boundedness coincides
with complete regularity in the sense of \cite{P2}
(see also \cite{AK,JR}) and $\norm{T\overline{\otimes} I_{S^1}}=
\norm{T}_{reg}$.
We prove that any map with
a direct Yeadon type factorization is $S^1$-bounded, with
$\norm{T\overline{\otimes} I_{S^1}}=\norm{T}$ (see Proposition
\ref{4.2}). Our main result 
is that conversely, any $S^1$-contractive separating isometry 
admits a direct Yeadon type factorization (see Theorem \ref{jan5}). 
The resulting 
statement (see Corollary \ref{JRS-L2}) that an isometry $T\colon L^2(\M)\to L^2(\N)$ 
is $S^1$-contractive if and only if it admits 
a direct Yeadon type factorization is both an $L^2$-version
of \cite[Proposition 3.2]{JRS} and a matricial version
of \cite[Theorem 4.2]{LMZ}. 

The spaces $L^p(\M;S^1)$
and $S^1$-boundedness are investigated in Section 3. We prove in passing that
any completely positive map $T\colon L^p(\M)\to L^p(\N)$
is $S^1$-bounded, with $\norm{T\overline{\otimes} I_{S^1}}=\norm{T}$
(see Theorem \ref{ls1}).

We also establish comparisons between 
direct Yeadon type factorizations and complete boundedness.
After observing that any separating map
$T\colon L^p(\M)\to L^p(\N)$ with a direct Yeadon type factorization
is completely bounded, with $\norm{T}_{cb}=\norm{T}$ (see Proposition
\ref{4.4}), we show
that conversely if $p\not=2$, any completely contractive
isometry $T\colon L^p(\M)\to L^p(\N)$ admits a direct 
Yeadon type factorization (see Theorem \ref{5.6}). This result
strengthens \cite[Proposition 3.2]{JRS}.

\section{Noncommutative $L^p$-spaces and representations of 
matrix spaces}


In this section, we give some background and preliminary
facts on  noncommutative $L^p$-spaces built over semifinite 
von Neumann algebras.

Let $\M$ be a semifinite von Neumann algebra equipped 
with a normal semifinite faithful (n.s.f.) trace \cite[Definition V.2.1]{Ta}. 
Except otherwise stated, this trace will be 
denoted by $\tau_{_{\tiny{\M}}}$. Assume that $\M\subset B(\H)$ acts 
on some Hilbert space $\H$. Let $L^0(\M)$ 
denote the $*$-algebra of all closed densely defined (possibly unbounded)
operators on $\H$, which are
$\tau_{_{\tiny{\M}}}$-measurable. 
Then for any $0< p<\infty$, the noncommutative $L^p$-space $L^p(\mathcal{M})$, 
associated with $(\mathcal{M},\tau_{\tiny{\M}})$, can be defined as
$$
L^p(\mathcal{M}):=\bigl\{x\in L^0(\mathcal{M})
\,:\,\tau_{\tiny\M}(\lvert x\rvert^p)<\infty\bigr\}.
$$
We set 
$\|x\|_p:=\tau_{\tiny\M}(\left\lvert x\rvert^p\right)^{\frac{1}{p}}$
for any 
$x\in L^p(\M)$. If $p\geq 1$, $L^p(\M)$ equipped with $\norm{\,\cdotp}_p$
is a Banach space.
The reader is referred to \cite{JX,PX,T}
and the references therein for details on the algebraic operations
on $L^0(\mathcal{M})$ and the construction of $L^p(\M)$, and  for
further properties.

We let $L^\infty(\M)=\M$ for convenience and for any $x\in \M$, we let 
$\norm{x}_\infty$ denote its operator norm. 
We recall that if $0< p,q,r \leq\infty$ are such that $\frac{1}{r}
=\frac{1}{p} + \frac{1}{q}$, then for any $x\in L^p(\M)$
and $y\in L^q(\M)$, the product $xy$ belongs to $L^r(\M)$, with
$\norm{xy}_r\leq  \|x\|_p\|y\|_q$. 
In particular, 
for any $1\leq p<\infty$, let $p'=\frac{p}{p-1}$ 
be the conjugate number of $p$. Then 
$xy$ belongs to $L^1(\M)$
for any $x\in L^p(\M)$ and $y\in L^{p'}(\M)$.
Further the duality pairing  
$$
\langle x,y\rangle=\tau_{\tiny{\M}}(xy),\qquad  x\in L^p(\mathcal{M}),\ y\in L^{p^{\prime}}(\mathcal{M}),
$$
yields an isometric isomorphism $L^{p}(\M)^*= 
L^{p'}(\M)$.
In particular, we may identify $L^1(\M)$ with the (unique) 
predual of $\mathcal{M}$. 
These duality results will be used without further reference in the paper.

For any $0<p\leq \infty$, we let $L^p(\M)^+$ denote the cone
of positive elements of $L^p(\M)$.

If $\A$ is a $w^*$-closed $*$-subalgebra of $\M$ 
such that the restriction of $\tau_{\tiny{\M}}$ to $\A^+$
is semifinite, then for any $0<p<\infty$,
we may define $L^p(\A)$ using this restriction
and $L^p(\A)$ isometrically embeds in $L^p(\M)$. 
In particular, for any projection $e$ in $\M$, 
the restriction of  $\tau_{\tiny{\M}}$ to the corner algebra
$e\M e$ is semifinite, and therefore we have a natural embedding
$$
L^p(e\M e)\,\subset\, L^p(\M).
$$

For any two von Neumann algebras $\M_1,\M_2$, 
we let $\M_1\overline{\otimes} \M_2$ 
denote their von Neumann tensor product \cite[Section
IV.5]{Ta}. If
$\tau_{\tiny{\M_1}}$ and $\tau_{\tiny{\M_2}}$ are n.s.f. traces on
$\M_1$ and $\M_2$, respectively, then $\tau_{\tiny{\M_1}}
\otimes \tau_{\tiny{\M_2}}$ uniquely extends to a n.s.f. trace
on $\M_1\overline{\otimes} \M_2$. Then for any 
any $0<p<\infty$, we have a natural embedding 
$L^p(\M_1)\otimes L^p(\M_2)\subset L^p(\M_1\overline{\otimes}\M_2)$, and
\begin{equation}\label{1-tensor}
\norm{x\otimes y}_p = \norm{x}_p\norm{y}_p,\qquad x\in L^p(\M_1),\, 
y\in L^p(\M_2).
\end{equation}
We further recall that $x\otimes y\in L^p(\M_1\overline{\otimes}\M_2)^+$
if $x\in L^p(\M_1)^+$ and 
$y\in L^p(\M_2)^+$.

We also note that the direct sum 
$\M_1\mathop{\oplus}\limits^{\infty}\M_2$ satisfies
\begin{equation}\label{DirectSum}
L^p\bigl(
\M_1\mathop{\oplus}\limits^{\infty}\M_2\bigr)
=L^p(\M_1)\mathop{\oplus}\limits^p L^p(\M_2)
\end{equation}
for any $0<p<\infty$.

We now fix some notations regarding matrix spaces.
Let $\H$ be a Hilbert space and let ${\rm tr}$ be the
usual trace on $B(\H)$. For any $0<p<\infty$, we
let $S^p(\H)$ denote the Schatten $p$-class of operators
on $\H$; this is the noncommutative $L^p$-space associated
with $(B(\H), {\rm tr})$. If $\H=\ell^2$, we simply
denote these spaces by $S^p$. For any $n\geq 1$, we let 
${\rm tr}_n$ denote the usual trace on $M_n$ and we let
$S^p_n$ denote the Schatten $p$-class of $n\times n$
matrices. We let $E_{ij}$, $1\leq i,j\leq n$, denote
the usual matrix units on $M_n$ and we let 
$I_n\in M_n$ be the identity matrix. Finally
whenever $\M$ is a semifinite von Neumann algebra
equipped with a n.s.f. trace $\tau_{\tiny{\M}}$, we let
$\tau_{\tiny\M,n} = {\rm tr}_n\otimes \tau_{\tiny{\M}}$
denote the natural trace on $M_n\overline{\otimes}\M$.
We note that $L^p(M_n\overline{\otimes}\M)$ can be naturally
regarded as a space of $n\times n$ matrices with values
in $L^p(\M)$. This brings us to the algebraic
identification  
\begin{equation}\label{Spn}
L^p(M_n\overline{\otimes}\M)
\,\simeq\, S^p_n\otimes L^p(\M).
\end{equation}

Let $T\colon L^p(\M)\to L^p(\N)$ be a bounded
operator between two noncommutative $L^p$-spaces. Following
usual terminology we say, using (\ref{Spn}),
that $T$ is completely bounded  if there exists a constant $C\geq 0$
such that 
$$
\bignorm{I_{S^p_n} \otimes T\colon L^p(M_n
\overline{\otimes}\M)\to L^p(M_n\overline{\otimes}\N)}\,\leq\,C
$$
for any $n\geq 1$. In this case we let $\norm{T}_{cb}$ denote the
smallest $C\geq 0$ satisfying this uniform estimate; it is called
the completely bounded norm of $T$. We say that $T$ is completely
contractive if $\norm{T}_{cb}\leq 1$. 
Further we say that $T$ is positive if it maps  $L^p(\M)^+$
into $L^p(\N)^+$ and we say that $T$ is 
completely positive maps if $I_{S^p_n} \otimes T$
is positive for any $n\geq 1$.
We recall that
in the case $p=2$, we have that any bounded $T\colon L^2(\M)\to L^2(\N)$
is automatically completely bounded, with $\norm{T}_{cb}=\norm{T}$.
This follows from the fact that $L^2(M_n
\overline{\otimes}\M)$ (resp. $L^2(M_n
\overline{\otimes}\N)$) coincides with the Hilbertian tensor product
of $S^2_n$ and $L^2(\M)$ (resp. $L^2(\N)$).

A positive map $T\colon (\M,\tau_{\tiny{\M}})\to (\N,\tau_{\tiny{\N}})$
is called trace preserving if $\tau_{\tiny{\N}}\circ T=\tau_{\tiny{\M}}$
on $\M^+$.

\begin{lemma}\label{TP-maps} 
Let $T\colon (\M,\tau_{\tiny{\M}})\to (\N,\tau_{\tiny{\N}})$ be a trace preserving 
$*$-homomorphism. Then for any $1\leq p <\infty$, the restriction of $T$ to $\M\cap L^1(\M)$
extends to a complete isometry $L^p(\M)\to L^p(\N)$.
\end{lemma}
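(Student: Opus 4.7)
The plan is to first establish the isometric property on the subalgebra $\M\cap L^1(\M)$ and then extend by density to all of $L^p(\M)$, with the completeness (complete isometry) following by applying the same argument to matrix amplifications.

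For the isometric estimate, fix $x\in\M\cap L^1(\M)$. Since $x\in\M$, the modulus $\vert x\vert$ is a bounded positive operator in $\M$, so $\vert x\vert^p\in\M$ is defined by continuous functional calculus, and $\tau_{\tiny{\M}}(\vert x\vert^p)=\norm{x}_p^p$ is finite because $\M\cap L^1(\M)\subset L^p(\M)$ by the standard interpolation inclusion. The key identity is $T(\vert x\vert^p)=\vert T(x)\vert^p$. Indeed, $T(\vert x\vert)^2=T(\vert x\vert^2)=T(x^*x)=T(x)^*T(x)=\vert T(x)\vert^2$, and since $T$, being a $*$-homomorphism, is positive, $T(\vert x\vert)$ is the positive square root of $\vert T(x)\vert^2$, hence $T(\vert x\vert)=\vert T(x)\vert$. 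Since a $*$-homomorphism of $C^*$-algebras intertwines continuous functional calculus on positive elements, $T(\vert x\vert^p)=T(\vert x\vert)^p=\vert T(x)\vert^p$. Combining this with trace preservation gives
$$
\norm{T(x)}_p^p\,=\,\tau_{\tiny{\N}}\bigl(\vert T(x)\vert^p\bigr)
\,=\,\tau_{\tiny{\N}}\bigl(T(\vert x\vert^p)\bigr)
\,=\,\tau_{\tiny{\M}}\bigl(\vert x\vert^p\bigr)
\,=\,\norm{x}_p^p.
$$
In particular, $T(x)\in\N\cap L^p(\N)\subset L^p(\N)$.

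Next I would invoke the standard density of $\M\cap L^1(\M)$ in $L^p(\M)$ for $1\leq p<\infty$: any $x\in L^p(\M)^+$ is approximated in $L^p$-norm by the truncations $xe_n$, where $e_n$ is the spectral projection of $x$ associated with $[0,n]$, and these truncations lie in $\M\cap L^1(\M)$. Linearity and polar decomposition extend this to arbitrary $x\in L^p(\M)$. The isometric inequality above then forces the restriction of $T$ to extend uniquely to an isometry $\widetilde{T}\colon L^p(\M)\to L^p(\N)$.

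For the complete isometry statement, I would apply the preceding argument to the amplification. The map $I_{M_n}\otimes T\colon M_n\overline{\otimes}\M\to M_n\overline{\otimes}\N$ is again a $*$-homomorphism, and using $\tau_{\tiny\M,n}={\rm tr}_n\otimes\tau_{\tiny\M}$ it is trace preserving with respect to the matricial traces. Hence by the same reasoning it extends to an isometry $L^p(M_n\overline{\otimes}\M)\to L^p(M_n\overline{\otimes}\N)$, which under the identification (\ref{Spn}) is precisely $I_{S^p_n}\otimes\widetilde{T}$. This yields $\norm{\widetilde{T}}_{cb}=1$, and the isometric property at each level makes $\widetilde{T}$ a complete isometry. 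I do not foresee a serious obstacle; the only point requiring a little care is the identity $T(\vert x\vert^p)=\vert T(x)\vert^p$, which, as noted, reduces cleanly to the functoriality of continuous functional calculus under $*$-homomorphisms, a purely $C^*$-algebraic fact that does not require normality of $T$.
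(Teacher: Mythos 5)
Your proof is correct and follows essentially the same route as the paper's one-line proof: the identity $\vert (I_{M_n}\otimes T)(x)\vert^p=(I_{M_n}\otimes T)(\vert x\vert^p)$ for the $*$-homomorphism and its amplifications, combined with trace preservation and density. The only slip is in your justification of the (standard, true) density of $\M\cap L^1(\M)$ in $L^p(\M)$: for $x\in L^p(\M)^+$ and $p>1$ the truncation $xe_n$ with $e_n=\chi_{[0,n]}(x)$ need not lie in $L^1(\M)$, since the spectral measure of $x$ near $0$ can carry infinite trace; one should instead take $e_n=\chi_{[1/n,\,n]}(x)$, which has finite trace because $\tau_{\tiny{\M}}(e_n)\leq n^p\,\tau_{\tiny{\M}}(x^p)$, after which the rest of your argument goes through unchanged.
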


\begin{proof}
Since $T$ is a $*$-homomorphism, $\vert(I_{M_n}\otimes T)(x)\vert^p
=(I_{M_n}\otimes T)(\vert x\vert^p)$ for any $x\in M_n\overline{\otimes}\M$. 
The result follows at once.
\end{proof}
 
We now give two elementary results on
the representation of matrix spaces into semifinite 
von Neumann algebras.

\begin{lemma}\label{last}
Suppose that $\M$ is a semifinite von Neumann algebra, let
$n\geq 1$ and let $\theta\colon M_n\to\M$ be a unital $*$-homomorphism. 
Then there exist a projection $e\in \M$ and a bijective 
$*$-homomorphism $\rho:\M\to M_n\overline{\otimes}(e\M e)$ such that 
$$
\left(\rho\circ\theta\right)(a)=a\otimes e, \qquad a\in M_n,
$$
and $\rho$ is trace preserving.
\end{lemma}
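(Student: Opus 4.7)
The plan is to use the matrix units $e_{ij}:=\theta(E_{ij})$ inside $\M$, and to take $e:=e_{11}$. Since $\theta$ is a unital $*$-homomorphism, the $e_{ij}$ satisfy $e_{ij}^*=e_{ji}$, $e_{ij}e_{kl}=\delta_{jk}e_{il}$, and $\sum_{i=1}^n e_{ii}=1$. In particular $e=e_{11}$ is a projection, and the restriction of $\tau_{\tiny{\M}}$ to $e\M e$ is semifinite, so that $e\M e$ is a semifinite von Neumann algebra and $L^p(e\M e)\subset L^p(\M)$.

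I would then define $\rho\colon\M\to M_n\overline{\otimes}(e\M e)$ by
$$
\rho(x)=\sum_{i,j=1}^n E_{ij}\otimes (e_{1i}\,x\,e_{j1}),\qquad x\in\M.
$$
Each entry $e_{1i}xe_{j1}$ belongs to $e\M e$, because $e_{11}e_{1i}=e_{1i}$ and $e_{j1}e_{11}=e_{j1}$. Multiplicativity of $\rho$ reduces to the identity $\sum_k e_{k1}e_{1k}=\sum_k e_{kk}=1$, which telescopes the product $\rho(x)\rho(y)$ into $\rho(xy)$; the $*$-property is clear from $(e_{1i}xe_{j1})^*=e_{1j}x^*e_{i1}$. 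The intertwining relation $(\rho\circ\theta)(a)=a\otimes e$ is verified on the basis $a=E_{kl}$, since $(\rho(e_{kl}))_{ij}=e_{1i}e_{kl}e_{j1}=\delta_{ik}\delta_{lj}\,e$.

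For bijectivity I would exhibit an explicit inverse. Given $y\in M_n\overline{\otimes}(e\M e)$ written as $y=\sum_{i,j}E_{ij}\otimes y_{ij}$ with $y_{ij}\in e\M e$, set
$$
\sigma(y):=\sum_{i,j=1}^n e_{i1}\,y_{ij}\,e_{1j}\,\in\M,
$$
which is a genuine finite sum in $\M$ since $n<\infty$. A direct computation with the matrix-unit relations gives $(\rho\circ\sigma)(y)=y$; conversely, expanding $x=\bigl(\sum_i e_{ii}\bigr)x\bigl(\sum_j e_{jj}\bigr)=\sum_{i,j}e_{i1}(e_{1i}xe_{j1})e_{1j}$ shows $(\sigma\circ\rho)(x)=x$. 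This simultaneously yields injectivity, surjectivity, and thus that $\rho$ is a bijective $*$-homomorphism.

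Finally, for trace preservation, using the tracial property of $\tau_{\tiny{\M}}$,
$$
\tau_{\tiny{\M}}\bigl(e_{i1}\,y_{ij}\,e_{1j}\bigr)=\tau_{\tiny{\M}}\bigl(e_{1j}e_{i1}\,y_{ij}\bigr)=\delta_{ij}\,\tau_{\tiny{\M}}(e\,y_{ii}\,e)=\delta_{ij}\,\tau_{\tiny{\M}}(y_{ii}),
$$
so $\tau_{\tiny{\M}}(\sigma(y))=\sum_i\tau_{\tiny{\M}}(y_{ii})=(\mathrm{tr}_n\otimes\tau_{\tiny{\M}}|_{e\M e})(y)$, which is exactly the trace-preservation of $\sigma=\rho^{-1}$, hence of $\rho$. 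There is no real obstacle in this argument: everything reduces to the matrix-unit calculus and the finiteness of $n$, and the semifiniteness of $\tau_{\tiny{\M}}|_{e\M e}$ needed for the statement to make sense follows because $\rho$ would in any case conjugate a semifinite trace to a semifinite one.
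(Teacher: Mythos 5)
Your proof is correct and follows essentially the same route as the paper: same projection $e=\theta(E_{11})$, same map $\rho(x)=\sum_{i,j}E_{ij}\otimes\theta(E_{1i})x\theta(E_{j1})$, with the bijectivity (which the paper delegates to the proof of \cite[Proposition IV.1.8]{Ta}) checked by the same explicit inverse. The only point worth tightening is the trace computation: for a semifinite trace the identity $\tau_{\tiny{\M}}(ab)=\tau_{\tiny{\M}}(ba)$ is not available for arbitrary, possibly non-integrable elements (your off-diagonal terms $e_{i1}y_{ij}e_{1j}$ with $i\neq j$ need not have a well-defined trace), so one should verify trace preservation on $\M^+$ as the paper does, using $\tau_{\tiny{\M}}(a^*a)=\tau_{\tiny{\M}}(aa^*)$ to get $\tau_{\tiny{\M}}(\theta(E_{1i})x\theta(E_{i1}))=\tau_{\tiny{\M}}(\theta(E_{ii})x)$ for $x\geq 0$ and then summing over $i$.
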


\begin{proof}
Let $e=\theta(E_{11})$, this is a projection. Since  $\theta$
is a unital $*$-homomorphism, the
family $\{\theta(E_{ij})\, :\, 1\leq i,j\leq n\}$ is a system of matrix units 
on $\M$. Hence as is well-known 
(see e.g. the proof of \cite[Proposition IV.1.8]{Ta}),
$x_{ij} := \theta(E_{1i})x\theta(E_{j1})$ belongs to $e\M e$ for any $x\in \M$ 
and any $1\leq i,j\leq n$, and the mapping
$$
\rho\colon \M\to M_n\overline{\otimes}(e\M e),\qquad \rho(x)=\sum_{i,j=1}^n E_{ij}\otimes x_{ij},
$$
is a bijective $*$-homomorphism. It is clear 
that $(\rho\circ\theta)(a)=a\otimes e$ for every $a$ in $M_n$. 

To check that $\rho$ is trace preserving, 
let $u_i=\theta(E_{i1})$ and $e_i=\theta(E_{ii})$ for all $1\leq i\leq n$. Then 
$u_iu_i^*=e_i$ and $e_1+\cdots + e_n =1$. Hence for any
$x\in \M^+$, $x_{ii}$ belongs to $(e\M e)^+$ for any $1\leq i\leq n$ and we have
$$
\sum_{i=1}^n\tau_{_{\tiny\M}}(x_{ii})=\sum_{i=1}^n\tau_{_{\tiny\M}}(u_i^*x
u_i)= \sum_{i=1}^n \tau_{_{\tiny\M}}(e_ix)=\tau_{_{\tiny\M}}(x).
$$ 
Therefore, $(tr_n\otimes\tau_{e\tiny{\M}e})
\circ\rho=\tau_{_{\tiny\M}}$ on $\M^+$.
\end{proof}

It is a classical fact that 
any non abelian von Neumann algebra contains
a copy of $M_2$. Here is a more precise statement
in the semifinite case.

\begin{lemma}\label{Contain-M2}
Let $\M$ be a non abelian semifinite 
von Neumann algebra. There exists a non zero
$*$-homomorphism $\gamma\colon M_2\to \M$
valued in $\M\cap L^1(\M)$.
\end{lemma}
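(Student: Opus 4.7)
The plan is to build inside $\M$ a system of $2\times 2$ matrix units $\{f,g,w,w^*\}$ all lying in $\M\cap L^1(\M)$, and then to define $\gamma$ by sending the standard matrix units of $M_2$ to these four elements. The construction proceeds in two movements: first produce an $M_2$-configuration inside $\M$ using only non-abelianness, then cut it down to the finite-trace regime using semifiniteness of $\tau_{\tiny{\M}}$.

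Since $\M$ is non-abelian, the spectral theorem applied to a pair of non-commuting selfadjoint elements produces a projection $p\in\M$ and an element $y\in\M$ for which, after possibly swapping $p$ and $1-p$, the element $z:=py(1-p)$ is non-zero. The polar decomposition $z=v\lvert z\rvert$ is performed in $\M$ and yields a partial isometry $v\in\M$ whose initial projection $e_1:=v^*v$ sits under $1-p$ and whose final projection $e_2:=vv^*$ sits under $p$; both are non-zero and satisfy $e_1\perp e_2$. Next, since $e_1\neq 0$ and $\tau_{\tiny{\M}}$ is semifinite, there is a non-zero projection $f\leq e_1$ with $\tau_{\tiny{\M}}(f)<\infty$. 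Set
\[
w:=vf,\qquad g:=vfv^*.
\]

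Because $f\leq e_1\perp e_2$ and $v=e_2v$, one has $fv=0$. A short computation then gives $w^*w=fv^*vf=fe_1f=f$, $ww^*=vfv^*vfv^*=vfv^*=g$, so $g$ is a projection with $g\leq e_2$; the trace property yields $\tau_{\tiny{\M}}(g)=\tau_{\tiny{\M}}(fv^*v)=\tau_{\tiny{\M}}(f)<\infty$, and $f\perp g$ since $f\leq e_1$ and $g\leq e_2$. Thus $\{f,g,w,w^*\}$ forms a system of $2\times 2$ matrix units in $\M$, and the prescription $\gamma(E_{11})=f$, $\gamma(E_{22})=g$, $\gamma(E_{21})=w$, $\gamma(E_{12})=w^*$ extends linearly to a non-zero $*$-homomorphism $\gamma\colon M_2\to\M$. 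To see that its image lies in $\M\cap L^1(\M)$, note that $f$ and $g$ are bounded projections with finite trace, while $\lvert w\rvert=(w^*w)^{1/2}=f^{1/2}=f$ yields $\tau_{\tiny{\M}}(\lvert w\rvert)=\tau_{\tiny{\M}}(f)<\infty$, and similarly for $w^*$.

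The only non-routine step is the first one, which extracts from the bare hypothesis of non-abelianness a partial isometry $v$ connecting two orthogonal non-zero projections of $\M$; once that standard fact is invoked, the semifiniteness cut-down and the matrix-unit bookkeeping are entirely routine.
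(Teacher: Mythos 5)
Your proof is correct, and it takes a genuinely different route from the paper. The paper first splits $\M$ into its type I and type II summands: in the type II case it invokes the halving lemma to write $\M_2\simeq M_2\overline{\otimes}(e\M_2 e)$ and sets $\gamma(a)=a\otimes\epsilon$ for a finite-trace projection $\epsilon$; in the type I case it uses the structure theorem to locate a summand $B(H)\overline{\otimes}W$ with $\dim H\geq 2$ and builds $\gamma$ from a rank-two corner tensored with a finite-trace projection of $W$. You bypass the type decomposition entirely: you extract a non-central projection $p$ (any non-abelian von Neumann algebra has one, since an algebra all of whose projections are central is abelian), use polar decomposition of $py(1-p)$ to produce a partial isometry linking two orthogonal non-zero projections, and then cut down by semifiniteness to a finite-trace subprojection $f\leq e_1$, transporting it to $g=vfv^*$ to obtain the matrix units $\{f,g,w,w^*\}$ with $w=vf$. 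All the verifications you sketch ($fv=0$ via $v=e_2v$, $w^*w=f$, $ww^*=g$, $\tau(g)=\tau(f)<\infty$, $\lvert w\rvert=f$) are accurate, and the resulting $\gamma$ is a non-zero (non-unital, which the lemma permits) $*$-homomorphism into $\M\cap L^1(\M)$. Your argument is more elementary and uniform, avoiding the halving lemma and the classification of type I algebras; the paper's argument is heavier but exhibits the copy of $M_2$ in a structurally transparent position (as a full matrix factor of a summand in the type II case). Either is acceptable here.
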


In the above statement, the condition that $\gamma$ is 
valued in $\M\cap L^1(\M)$ does not come for free. 
Consider for example an infinite dimensional Hilbert space $H$ and 
let $\M=B(H\mathop{\oplus}\limits^2
H)\simeq M_2\overline{\otimes} B(H)$.
Then the mapping $a\mapsto a\otimes I_H$
is a $*$-homomorphism from $M_2$ into 
$\M$ and for any $a\in {M_2}^+$, $a\not=0$,
the trace of 
$a\otimes I_{H}$ is infinite.

\begin{proof}[Proof of Lemma \ref{Contain-M2}]
Let $\M=\M_1\mathop{\oplus}\limits^\infty \M_2$ be the
direct sum decomposition of $\M$ 
into a type I summand $\M_1$ and a type 
II summand $\M_2$ (see e.g. \cite[Section V]{Ta}). 

Assume that $\M_2\not=\{0\}$. According
to \cite[Lemma 6.5.6]{KR2}, there exist two equivalent mutually
orthogonal projections $e,f$ in $\M_2$ such that $e+f=1$. Then 
by \cite[Proposition V.1.22]{Ta} and its proof, 
$\M_2\simeq M_2\overline{\otimes} (e\M_2 e)$. Let $\epsilon\in e\M_2 e$ be a non
zero projection with finite trace. Then  
$\tau_{\tiny\M_2}(a\otimes \epsilon)={\rm tr_2}(a)\tau_{e{\tiny\M_2}e}(\epsilon)
<\infty$ for any $a\in {M_{2}}^+$. 
Hence the mapping 
$\gamma \colon M_{2}\to \M_2\subset \M$
defined by $\gamma(a)=  a\otimes \epsilon$
is a non zero $*$-homomorphism taking values in $L^1(\M)$.

If $\M_2 =\{0\}$, then $\M=\M_1$ is type I. Since 
$\M$ is non abelian, it follows from \cite[Theorem V.1.27]{Ta}
that there exists a Hilbert space $H$ with 
${\rm dim}(H)\geq 2$ and an abelian von Neumann
algebra $W$ such that $\M$ contains $B(H)\overline{\otimes} W$
as a summand. Let $e\in B(H)$ be a rank one projection
and define $\tau_{\tiny W}\colon W^+\to[0,\infty]$ by 
$\tau_{\tiny W}(z) = \tau_{\tiny\M}(e\otimes z)$. Then 
$\tau_{\tiny W}$ is a n.s.f. trace and $\tau_{\tiny\M}$
coincides with ${\rm tr}\otimes \tau_{\tiny W}$ on $B(H)^+\otimes
W^+$.
Let $\epsilon\in W$ be a non zero projection with finite trace. 
Then  it follows from above that
$\tau_{\tiny\M}(a\otimes \epsilon)<\infty$ for any finite rank $a\in B(H)^+$. 
Now let $(e_1,e_2)$ be
an orthonormal family in $H$. Then 
the mapping $\gamma\colon M_{2}\to \M_2$ taking
any $[a_{ij}]_{1\leq i,j\leq 2}$ to 
$\sum_{i,j} a_{ij} \,\overline{e_j}\otimes e_i\otimes \epsilon$
is a non zero $*$-homomorphism and
the restriction of $\tau_{\tiny \M}$ to the positive part of its 
range is finite. Hence $\gamma$ is valued in $L^1(\M)$.
\end{proof}

\section{$S^1$-boundedness}
In this section we introduce $S^1$-valued noncommutative 
$L^p$-spaces, in a way which extends the 
definition provided by \cite[Chapter 3]{P5} in the hyperfinite case. 
Then we introduce the notions 
of $S^1$-boundedness and $S^1$-contractivity for bounded maps between 
noncommutative $L^p$-spaces,
and we discuss the connection between $S^1$-boundedness and 
complete positivity.

We fix a semifinite von Neumann algebra $\M$.  We recall the definitions and basic properties
of column/row valued $L^p(\M)$-spaces for which we refer to \cite{PX0} (see also \cite{PX, J, LMZ}).
Let $\Lambda$ be an index set, and consider the Hilbert space $\displaystyle{\ell^2_{\Lambda}}$. 
For any $1\leq p\leq \infty$, let $L^p(\M;\{\ell^2_{\Lambda}\}_c)$ 
denote the space of all 
families $(b_{\lambda})_{\lambda\in\Lambda}$ of elements in $L^p(\M)$ such that the sums 
$\sum_{\lambda\in F}b_{\lambda}^*b_{\lambda}$, for finite $F\subset \Lambda$, are uniformly bounded
in $L^{\frac{p}{2}}(\M)$. Then for any such family, set 
$$
\bignorm{(b_\lambda)_\lambda}_{L^p(\tiny{\M};\{\ell^2_{\Lambda}\}_c)}\, 
=\,\sup\biggl\{\Bignorm{\sum_{\lambda\in F}b_{\lambda}^*b_{\lambda}
}_{\frac{p}{2}}^{\frac12}\biggr\},
$$
where the supremum runs over all finite $F\subset \Lambda$. This defines a norm
on $L^p(\M;\{\ell^2_{\Lambda}\}_c)$ and $L^p(\M;\{\ell^2_{\Lambda}\}_c)$ is complete. 

Likewise for any $1\leq p\leq \infty$, we let $L^p(\M;\{\ell^2_{\Lambda}\}_r)$ denote the space of all 
families $(a_{\lambda})_{\lambda\in\Lambda}$ of elements in $L^p(\M)$ such that the sums 
$\sum_{\lambda\in F}a_{\lambda}a_{\lambda}^*$, for finite $F\subset \Lambda$, are uniformly bounded
in $L^{\frac{p}{2}}(\M)$. This is a Banach space for the norm
$$
\bignorm{(a_\lambda)_\lambda}_{L^p(\tiny{M};\{\ell^2_{\Lambda}\}_r)}\, 
=\,\sup\biggl\{\Bignorm{\sum_{\lambda\in F}a_{\lambda}a_{\lambda}^*
}_{\frac{p}{2}}^{\frac12}\biggr\},
$$
where the supremum runs over all finite $F\subset \Lambda$. 
It is plain that $(a_{\lambda})_{\lambda\in\Lambda}$
belongs to $L^p(\M;\{\ell^2_{\Lambda}\}_r)$ if and only
if $(a_{\lambda}^*)_{\lambda\in\Lambda}$
belongs to $L^p(\M;\{\ell^2_{\Lambda}\}_c)$.

Let $(E_{\lambda,\mu})_{\lambda,\mu\in\Lambda}$ be the matrix units 
in $B(\ell^2_\Lambda)$ corresponding to the standard basis of
$\ell^2_\Lambda$. We may regard
any $z\in L^p(B(\ell^2_\Lambda)\overline{\otimes} \M)$ as a matrix
$(z_{\lambda,\mu})_{\lambda,\mu\in\Lambda}$ of elements in $L^p(\M)$,
with $E_{\lambda,\mu}\otimes z_{\lambda,\mu} = (E_{\lambda,\lambda}\otimes 1)z(E_{\mu,\mu}\otimes 1)$.
Then $L^p(\M;\{\ell^2_{\Lambda}\}_c)$ can be identified with any column subspace of 
$L^p(B(\ell^2_\Lambda)\overline{\otimes} \M)$. More precisely
fix any $\mu_0\in \Lambda$. If 
$z\in L^p(B(\ell^2_\Lambda)\overline{\otimes} \M)$ is such that 
$z_{\lambda,\mu}=0$ for any $\mu\not=\mu_0$ and any $\lambda$,
then $(z_{\lambda,\mu_0})_{\lambda\in\Lambda}$ belongs to 
$L^p(\M;\{\ell^2_{\Lambda}\}_c)$ and its norm in the latter
space is equal to the norm of $z$ in $L^p(B(\ell^2_\Lambda)\overline{\otimes} \M)$.
Conversely for any $(b_{\lambda})_{\lambda\in\Lambda}$ in 
$L^p(\M;\{\ell^2_{\Lambda}\}_c)$, the matrix $(z_{\lambda,\mu})_{\lambda,\mu\in\Lambda}$
defined, for any $\lambda\in\Lambda$, 
by $z_{\lambda,\mu_0}=b_\lambda$ and  $z_{\lambda,\mu}=0$ if $\mu\not=\mu_0$,
represents an element $z$ of $L^p(B(\ell^2_\Lambda)\overline{\otimes} \M)$. 

Likewise $L^p(\M;\{\ell^2_{\Lambda}\}_r)$ can be identified with any row subspace of 
$L^p(B(\ell^2_\Lambda)\overline{\otimes} \M)$.

We will use the fact that if $p\geq 1$ is finite, then for any 
$(a_{\lambda})_{\lambda\in\Lambda}$ in $L^{p}(\M;\{\ell^2_{\Lambda}\}_r)$
and for any $(b_{\lambda})_{\lambda\in\Lambda}$ in $L^{p}(\M;\{\ell^2_{\Lambda}\}_c)$,
the family $(a_\lambda b_\lambda)_{\lambda\in\Lambda}$ is summable in $L^{\frac{p}{2}}(\M)$
for the usual topology. This allows to define the sums
\begin{equation}\label{Summation}
\sum_{\lambda} a_\lambda b_\lambda,
\qquad \sum_{\lambda} a_\lambda a_\lambda^*
\qquad\hbox{and}\qquad \sum_\lambda b_\lambda^* b_\lambda
\end{equation}
as elements of $L^{\frac{p}{2}}(\M)$.

In the case when $p=\infty$,  the spaces 
$L^\infty(\M;\{\ell^2_{\Lambda}\}_r)$ and 
$L^\infty(\M;\{\ell^2_{\Lambda}\}_c)$ coincide with the row 
space $R_\Lambda^\omega(M)$ and the column space $C_\Lambda^\omega(M)$ from
\cite[1.2.26-1.2.29]{BLM}, respectively. For any  
$(a_{\lambda})_{\lambda\in\Lambda}$ in $L^{\infty}
(\M;\{\ell^2_{\Lambda}\}_r)$
and for any $(b_{\lambda})_{\lambda\in\Lambda}$ in 
$L^{\infty}(\M;\{\ell^2_{\Lambda}\}_c)$,
the family $(a_\lambda b_\lambda)_{\lambda\in\Lambda}$ 
is summable in the $w^*$-topology
of $\M$ and the sums in  (\ref{Summation}) are defined 
in $\M$ according to this topology.

The next lemma is a polar decomposition principle 
which will be used several times in our arguments.
We state it for column valued $L^p(\M)$-spaces; a similar statement holds for 
row valued $L^p(\M)$-spaces.

\begin{lemma}\label{pdc}
Let $1\leq p<\infty$, let $\Lambda$ be an index set and consider a family
$(b_{\lambda})_{\lambda\in\Lambda}$ of $L^p(\M)$.  
The following assertions are equivalent.
\begin{itemize}
\item[(i)] The family $(b_{\lambda})_{\lambda\in\Lambda}$ belongs to $L^p(\M;\{\ell^2_{\Lambda}\}_c)$ and
$\displaystyle{\|(b_{\lambda})_{\lambda}\|_{L^p(\tiny{\M}; \{\ell^2_{\Lambda}\}_c)}\leq 1}$.	
\item[(ii)] There exist a family $(w_{\lambda})_{\lambda\in\Lambda}$ in $L^\infty(\M;\{\ell_{\Lambda}^2\}_c)$ 
and $b$ in $L^p(\M)$ with
$$
\|(w_{\lambda})_{\lambda}\|_{L^\infty(\tiny{\M};\{\ell_{\Lambda}^2\}_c)}\leq
1\qquad \text{and}\qquad\|b\|_p\leq 1,
$$
such that for all $\lambda\in\Lambda$,
$b_{\lambda}=w_{\lambda} b$.
\end{itemize}
\end{lemma}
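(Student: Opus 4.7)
My plan is to handle the two implications separately. The easy direction (ii) $\Rightarrow$ (i) reduces to expanding
\[
\sum_{\lambda \in F} b_\lambda^* b_\lambda \,=\, b^* \Bigl(\sum_{\lambda \in F} w_\lambda^* w_\lambda\Bigr) b
\]
for finite $F \subset \Lambda$ and applying noncommutative H\"older, using that the middle factor has operator norm at most $1$ by hypothesis. This yields $\bigl\|\sum_F b_\lambda^* b_\lambda\bigr\|_{p/2} \leq \|b\|_p^2 \leq 1$, hence (i).

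For the converse (i) $\Rightarrow$ (ii), the plan is to lift the family $(b_\lambda)$ to a single element of the ambient space $L^p(B(\ell^2_\Lambda) \overline{\otimes} \M)$ and polar-decompose it there. Fix $\mu_0 \in \Lambda$; by the isometric column identification described in the text above, the family corresponds to an element $z \in L^p(B(\ell^2_\Lambda) \overline{\otimes} \M)$ with entries $z_{\lambda, \mu_0} = b_\lambda$ and zero elsewhere, satisfying $\|z\|_p \leq 1$. A direct matrix computation yields
\[
z^* z \,=\, E_{\mu_0, \mu_0} \otimes \sum_\lambda b_\lambda^* b_\lambda,
\]
so, setting $b := \bigl(\sum_\lambda b_\lambda^* b_\lambda\bigr)^{1/2}$, I get $|z| = E_{\mu_0, \mu_0} \otimes b$ and, by (\ref{1-tensor}), $\|b\|_p = \|z\|_p \leq 1$. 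Now take the polar decomposition $z = v |z|$, where $v \in B(\ell^2_\Lambda) \overline{\otimes} \M$ is a partial isometry with $v^* v = s(|z|) = E_{\mu_0, \mu_0} \otimes s(b)$. The identity $v = v(v^* v)$ forces $v$ to be supported on the $\mu_0$-column, so $v = \sum_\lambda E_{\lambda, \mu_0} \otimes w_\lambda$ with $w_\lambda \in \M$. Plugging back into $z = v |z|$ gives $b_\lambda = w_\lambda b$ for every $\lambda$, while expanding $v^* v$ gives $\sum_\lambda w_\lambda^* w_\lambda = s(b)$, a projection of norm at most $1$. Hence $(w_\lambda)_\lambda \in L^\infty(\M; \{\ell^2_\Lambda\}_c)$ has norm at most $1$, as required.

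The main obstacle, in my view, is justifying that the polar partial isometry $v$ really inherits the column structure of $z$. This follows from $v = v(v^* v)$ together with the explicit form of the source projection, but requires some care at the matrix-entry level when $\Lambda$ is infinite. Once this structural point is settled, the rest of the argument is a direct transfer of classical polar decomposition from the ambient $L^p$-space to the column subspace via the identification provided in the text.
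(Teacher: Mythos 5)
Your proposal is correct and follows essentially the same route as the paper: both directions are handled by the identical column identification with $L^p(B(\ell^2_\Lambda)\overline{\otimes}\M)$, the polar decomposition $z=v|z|$ with $|z|=E_{\mu_0,\mu_0}\otimes b$, and the expansion $\sum_F b_\lambda^*b_\lambda=b^*(\sum_F w_\lambda^*w_\lambda)b$ for the converse. Your extra remark that $v=v(v^*v)$ with $v^*v=E_{\mu_0,\mu_0}\otimes s(b)$ forces the column support of $v$ is exactly the (implicit) justification the paper uses for $w_{\lambda,\mu}=0$ when $\mu\neq\mu_0$.
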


\begin{proof}
Assume (i). Following the above discussion we fix $\mu_0\in\Lambda$
and consider the element $z\in L^p(B(\ell^2_\Lambda)\overline{\otimes} \M)$
such that $z_{\lambda,\mu_0}=b_\lambda$ and  $z_{\lambda,\mu}=0$ for any $\mu\not=\mu_0$.
Then we have
$$
z=\,\sum_{\lambda} E_{\lambda,\mu_0}\otimes b_\lambda,
$$
with norm convergence in $L^p(B(\ell^2_\Lambda)\overline{\otimes} \M)$.
Consider the polar decomposition $z=w\vert z\vert$ of $z$, with $w\in  B(\ell^2_\Lambda)\overline{\otimes} \M$
and $\vert z\vert \in L^p(B(\ell^2_\Lambda)\overline{\otimes} \M)$.
Then we have 
$$
\vert z \vert =E_{\mu_0,\mu_0}\otimes b,
\qquad\hbox{with}\qquad b=\Bigl(\sum_\lambda b_\lambda^*b_\lambda\Bigr)^{\frac12}.
$$
We note that $\norm{b}_p =\|(b_{\lambda})_{\lambda}\|_{L^p(\tiny{\M}; \{\ell^2_{\Lambda}\}_c)}\leq 1$.

Now if $(w_{\lambda,\mu})_{\lambda,\mu\in\Lambda}$
is the family of $\M$ representing $w$, then for any $\lambda\in\Lambda$,
we have $b_{\lambda}=w_{\lambda,\mu_0}b$ and
$w_{\lambda,\mu}=0$ if $\mu\not=\mu_0$.
Hence 
the family  $(w_{\lambda,\mu_0})_{\lambda\in\Lambda}$ belongs 
to $L^\infty(\M;\{\ell_{\Lambda}^2\}_c)$ 
and its norm in the latter space is $\norm{w}\leq 1$. 
This yields (ii).

The converse implication ``(ii)$\,\Rightarrow\,$ (i)" 
folows from the fact that for any finite $F\subset \Lambda$, we have
$$
\sum_{\lambda \in F} (w_\lambda b)^* (w_\lambda b) = b^*\Bigl(
\sum_{\lambda \in F} w_\lambda^* w_\lambda \Bigr)b.
$$
\end{proof}

\begin{definition}\label{S1}
Let $1\leq p<\infty$. 
We let  $L^p(\M;S^1)$ denote the space of all infinite matrices $[x_{ij}]_{i,j\geq1}$ 
in $L^p(\mathcal{M})$ for which there exist families
$$
(a_{ik})_{i,k\geq1}\in L^{2p}(\mathcal{M};\{\ell^2_{\Ndb^2}\}_r)
\qquad\text{and}\qquad (b_{kj})_{k,j\geq1}\in L^{2p}(\mathcal{M};\{\ell^2_{\Ndb^2}\}_c)
$$ 
such that for all $i,j\geq1$,
$$
x_{ij}=\sum_{k =1}^{\infty} a_{ik}b_{kj}.
$$ 
We equip $L^p(\M;S^1)$ with the following norm, 
\begin{equation}\label{Norm}
\|[x_{ij}]\|_{L^p(\mathcal{M};S^1)}
=\inf\left\{\left\|(a_{ik})_{i,k}\right\|_{L^{2p}(\mathcal{M}; \{\ell^2_{\Ndb^2}\}_r)} 
\left\|(b_{kj})_{k,j}\right\|_{L^{2p}(\mathcal{M}; \{\ell^2_{\Ndb^2}\}_c)}\right\},
\end{equation}
where the infimum is taken over all families $(a_{ik})_{i,k\geq1}$ and $(b_{kj})_{k,j\geq1}$ as above.  
\end{definition}

When applying (\ref{Norm}), we will use the fact that we both have
$$
\left\|(a_{ik})_{i,k}\right\|_{L^{2p}(\mathcal{M}; \{\ell^2_{\Ndb^2}\}_r)} 
=\Bignorm{\sum_{i,k} a_{ik} a_{ik}^*}_p^\frac12
\quad\hbox{and}\quad
\left\|(b_{kj})_{k,j}\right\|_{L^{2p}(\mathcal{M}; \{\ell^2_{\Ndb^2}\}_c)}
=\Bignorm{\sum_{j,k} b_{kj}^* b_{kj}}_p^\frac12.
$$

The above definition is a natural extension
of Junge's spaces $L^p(\M;\ell^1)$ introduced in \cite{J}.
A similar argument as in the proof of \cite[Lemma 3.5]{J} shows that $L^p(\M;S^1)$ is a vector 
space and that (\ref{Norm}) is indeed a norm. Moreover $L^p(\M;S^1)$ endowed with this norm is a Banach space.

For any integer $n\geq 1$, let $\Ndb_n=\{1,\ldots,n\}$. We
let $L^p(\M;S^1_n)$ be the subspace of $L^p(\M;S^1)$ of 
matrices $[x_{ij}]_{i,j\geq1}$ with support in $\Ndb_n\times\Ndb_n$. 
We note that $\bigcup_n L^p(\M;S^1_n)$ is dense in $L^p(\M;S^1)$.

\begin{remark}\label{algebraic}
Identifying a finite matrix $[x_{ij}]_{1\leq i,j\leq n}$ 
of elements in $L^p(\M)$ with the sum $\sum_{i,j=1}^n
x_{ij}\otimes E_{ij}$, we see that at the algebraic level, 
$L^p(\M;S^1_n) = L^p(\M)\otimes S^1_n$.
More generally we have a natural embedding
\begin{equation}\label{Embed}
L^p(\M)\otimes S^1\,\subset\, L^p(\mathcal{M};S^1).
\end{equation}
More precisely, 
consider a matrix $c= [c_{ij}]_{i,j\geq 1}$ in $S^1$ and $x\in L^p(\M)$.
Let $c'=[c'_{ik}]_{i,k\geq 1}$ and 
$c''= [c''_{kj}]_{k,j\geq 1}$ in $S^2$ such that $c'c''=c$ and
let $x',x''\in L^{2p}(\M)$ such that $x'x''=x$. Then 
$(c'_{ik}x')_{i,k\geq 1}$ and $(c''_{kj}x'')_{k,j\geq 1}$ belong
to $L^{2p}(\mathcal{M};\{\ell^2_{\Ndb^2}\}_r)$
and $L^{2p}(\mathcal{M};\{\ell^2_{\Ndb^2}\}_c)$, respectively, and 
$c_{ij}x=\sum_{k} (c'_{ik}x')(c''_{kj}x'')$ for all $i,j\geq 1$. 
Thus $[c_{ij}x]_{i,j\geq 1}$ belongs to $L^p(\mathcal{M};S^1)$. 
Identifying this matrix with $x\otimes c$, this yields (\ref{Embed}). 
It is clear that with this convention, $L^p(\M)\otimes S^1$
is a dense subspace of $L^p(\mathcal{M};S^1)$.
\end{remark}

Lemma \ref{P21} below shows that for elements of $L^p(\M;S^1_n)$, the infimum
in (\ref{Norm}) can be taken over finite families only. This will turn out to be 
very convenient 
in future arguments. To obtain this property we will use a natural connection
between the definition of the norm on $L^p(\M;S^1)$ and decomposable operators.

Let $\A$ and $\B$ be $C^*$-algebras. A linear map $\theta\colon \A\to \B$ is said 
to be decomposable if $\theta$ is a linear combination of completely positive 
maps from $\A$ into $\B$. In this case, $\theta$ may be written as 
$\theta=(\theta_1-\theta_2)+i(\theta_3-\theta_4)$, for four completely positive maps
$\theta_j\colon \A\to \B$. 
Note, for example, that any finite rank operator between $C^*$-algebras is decomposable. 
In \cite{H}, Haagerup introduced a norm
$\decnorm{\,\cdotp}$ on the space of all decomposable maps from $\A$ into $\B$. 
We refer to the latter paper and also to 
\cite[Chap. 11 $\&$ 14]{P3} for basic properties 
of this norm. (This norm is 
given in Remark \ref{DecLp}, however we will not need it explicitly here.)

Let $n\geq 1$ and let $\theta\colon M_n\to \M$ be a linear map. According to 
\cite[Prop. 4.5]{LMM},
$$
\decnorm{\theta}=
\inf\left\{\left\|(v_{ik})_{i,k}
\right\|_{L^{\infty}(\mathcal{M}; \{\ell^2_{\Ndb_n\times\Ndb}\}_r)} 
\left\|(w_{kj})_{k,j}\right\|_{L^{\infty}
(\mathcal{M}; \{\ell^2_{\Ndb\times\Ndb_n}\}_c)}\right\},
$$
where the infimum runs over all families $(v_{ik})_{i,k}$ and $(w_{kj})_{k,j}$ in $\M$
such that $\theta(E_{ij}) = \sum_{k=1}^\infty v_{ik}w_{kj}$ for any $1\leq i,j\leq n$. Applying Lemma \ref{pdc}
and its row counterpart, we deduce that for any linear map 
$u\colon M_n\to L^p(\M)$,
\begin{equation}\label{Mag}
\bignorm{\bigl[u(E_{ij})\bigr]}_{L^p(\mathcal{M};S^1_n)}
=\inf\left\{\norm{a}_{2p}\decnorm{\theta}\norm{b}_{2p}\right\},
\end{equation}
where the infimum runs over all $a,b\in L^{2p}(\M)$ and all
linear maps $\theta\colon M_n\to\M$ such that 
$$
u(s) = a\theta(s)b,\qquad s\in M_n.
$$

We will use Pisier's delta norm $\delta$ on $\M\otimes S^1_n$
introduced in \cite[Chapter 12]{P3}
(see also \cite[Sections 6.4-6.5]{BLM}).  
Given a matrix $[y_{ij}]_{1\leq i,j\leq n}$ of elements in $\M$, consider 
the associated operator $\theta\colon M_n\to \M$ defined by $\theta(E_{ij})
= y_{ij}$ for any $1\leq i,j\leq n$. By \cite[Corollary 12.4]{P3}, we have
$\decnorm{\theta} = \|[y_{ij}]\|_{_{\delta}}$. Combining with
(\ref{Mag}), we deduce that for any matrix $[x_{ij}]_{1\leq i,j\leq n}$ 
of elements in $L^p(\M)$, we have
\begin{align}\label{eqnorm}
\|[x_{ij}]\|_{L^p(\tiny{\M};S^1_n)}=\inf\bigl\{\|a\|_{2p}\|[y_{ij}]\|_{_{\delta}}\|b\|_{2p}\bigr\},
\end{align}
where the infimum is taken over all factorizations of 
$[x_{ij}]$ of the form 
$$
x_{ij}=ay_{ij} b,\qquad 1\leq i,j\leq n,
$$
with $a,b$ in $L^{2p}(\M)$ and $y_{ij}$ in $\M$.

\begin{lemma}\label{P21}
Let $1\leq p<\infty$ and let $n\geq 1$. For any $[x_{ij}]_{1\leq i,j\leq n}$ in $L^p(\M;S^1_n)$, 
the following assertions are equivalent.
\begin{itemize}
\item[(i)] $\|[x_{ij}]\|_{L^p(\tiny{\M};S^1_n)}<1$.
\item[(ii)] There exist an integer $m\geq 1$ and families $(a_{ik})_{1\leq i\leq n,1\leq k\leq m}$
and $(b_{kj})_{1\leq k\leq m,1\leq j\leq n}$ in $L^{2p}(\M)$ such that 
$x_{ij}=\sum_{k=1}^m a_{ik}b_{kj}$, for all $1\leq i,j\leq n$, and  
$$
\Bignorm{\sum_{i=1}^n\sum_{k=1}^m a_{ik}a_{ik}^*}_{p}<1
\qquad \text{and}\qquad 
\Bignorm{\sum_{j=1}^n\sum_{k=1}^m b_{kj}^* b_{kj}}_{p}<1.
$$
\end{itemize} 
\end{lemma}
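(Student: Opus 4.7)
The reverse implication (ii) $\Rightarrow$ (i) is immediate: pad the two finite families by zeros to obtain elements of $L^{2p}(\M;\{\ell^2_{\Ndb^2}\}_r)$ and $L^{2p}(\M;\{\ell^2_{\Ndb^2}\}_c)$ whose norms are $\bignorm{\sum_{i,k}a_{ik}a_{ik}^*}_p^{1/2}<1$ and $\bignorm{\sum_{j,k}b_{kj}^*b_{kj}}_p^{1/2}<1$ respectively. Applying the definition (\ref{Norm}) with these padded families then gives $\|[x_{ij}]\|_{L^p(\M;S^1_n)}<1$.

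For (i) $\Rightarrow$ (ii), the plan is to start from (\ref{eqnorm}), which realizes the norm via a factorization through $\M$. Assume $\|[x_{ij}]\|_{L^p(\M;S^1_n)}<1$. Then (\ref{eqnorm}) provides $a,b\in L^{2p}(\M)$ and $[y_{ij}]\in M_n(\M)$ with $x_{ij}=ay_{ij}b$ and $\|a\|_{2p}\|[y_{ij}]\|_{_\delta}\|b\|_{2p}<1$. By homogeneity in $a$, $b$ and $[y_{ij}]$, I may rescale so that each of the three factors $\|a\|_{2p}$, $\|b\|_{2p}$, $\|[y_{ij}]\|_{_\delta}$ is strictly less than $1$.

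The next step exploits that the delta norm of a finite matrix $[y_{ij}]\in M_n(\M)$ is realized by a \emph{finite} factorization. Via the identification $\|[y_{ij}]\|_{_\delta}=\decnorm{\theta}$ for $\theta\colon M_n\to\M$ sending $E_{ij}\mapsto y_{ij}$, and using the formula for $\decnorm{\theta}$ recalled before (\ref{Mag}), a standard compactness/truncation argument reduces the defining infimum to finitely supported families (the matrix having only $n^2$ entries). So I can pick $m\geq 1$ and $(v_{ik})_{1\leq i\leq n,\, 1\leq k\leq m}$, $(w_{kj})_{1\leq k\leq m,\, 1\leq j\leq n}$ in $\M$ with $y_{ij}=\sum_{k=1}^m v_{ik}w_{kj}$ and
$$
\Bignorm{\sum_{i,k} v_{ik}v_{ik}^*}_\infty^{1/2}\Bignorm{\sum_{k,j} w_{kj}^*w_{kj}}_\infty^{1/2}<1,
$$
and by a further homogeneity adjustment I may assume each of the two factors above is $<1$.

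Finally I set $a_{ik}=av_{ik}\in L^{2p}(\M)$ and $b_{kj}=w_{kj}b\in L^{2p}(\M)$. Then $\sum_k a_{ik}b_{kj}=a y_{ij}b=x_{ij}$. The crucial identity
$$
\sum_{i=1}^n\sum_{k=1}^m a_{ik}a_{ik}^*\,=\, a\Bigl(\sum_{i,k} v_{ik}v_{ik}^*\Bigr)a^*
$$
together with the elementary Hölder bound $\|a c a^*\|_p\leq \|a\|_{2p}^2\|c\|_\infty$ for positive $c\in\M$ yields the desired estimate $\bignorm{\sum_{i,k} a_{ik}a_{ik}^*}_p<1$, and the symmetric computation handles $\bignorm{\sum_{k,j} b_{kj}^*b_{kj}}_p<1$. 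The main obstacle I expect is simply justifying that the delta norm is attained (up to $\epsilon$) by a finite factorization; everything else is a rescaling and Hölder routine.
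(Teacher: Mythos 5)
Your proposal is correct and follows essentially the same route as the paper: pass to (\ref{eqnorm}), rescale so that $\norm{a}_{2p}$, $\norm{b}_{2p}$ and $\norm{[y_{ij}]}_{\delta}$ are each $<1$, produce a finite factorization $y_{ij}=\sum_{k=1}^m v_{ik}w_{kj}$ with the two $L^\infty$ bounds, and set $a_{ik}=av_{ik}$, $b_{kj}=w_{kj}b$. The one step you flag as the obstacle--that the delta norm of a finite matrix over $\M$ is witnessed by a finite factorization--is exactly what the paper settles by citing \cite[Proposition 6.5.2]{BLM}; note that a naive truncation of an infinite factorization only converges in the $w^*$-topology, so invoking that reference (or an equivalent statement) is cleaner than a bare truncation argument.
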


\begin{proof}
Assume (i), that is, $\|[x_{ij}]\|_{L^p(\tiny{\M};S^1_n)}<1$. 
By (\ref{eqnorm}), there exist a matrix $[y_{ij}]_{1\leq i,j\leq n}$ 
of elements in $\M$ and $a,b\in L^{2p}(\M)$ such that 
$$
\norm{a}_{2p} <1,\qquad \norm{b}_{2p} <1,\qquad \|[y_{ij}]\|_{_{\delta}}<1,
$$
and $x_{ij}=ay_{ij}b$ for all $1\leq i,j\leq n$. According to \cite[Proposition 6.5.2]{BLM}  
there exist $m\geq1$, and families $(v_{ik})_{1\leq i\leq n,1\leq k\leq m}$ 
and $(w_{kj})_{1\leq k\leq m,1\leq j\leq n}$ in $\M$ such that
$y_{ij} = \sum_{k=1}^m v_{ik} w_{kj}$ for any $1\leq i,j\leq n$, and 
$$
\Bignorm{\sum_{i=1}^n\sum_{k=1}^m v_{ik}v_{ik}^*}_{\infty}<1,
\qquad \Bignorm{\sum_{j=1}^n\sum_{k=1}^m w_{kj}^* w_{kj}}_{\infty}<1.
$$
For any $1\leq i,j\leq n$ and any $1\leq k\leq m$, set $a_{ik}=a v_{ik}$
and $b_{kj} = w_{kj}b$. Then 
they satisfy the assertion (ii).

The converse implication ``(ii) $\,\Rightarrow\,$ (i)" is obvious.
\end{proof}

\begin{remark}\label{Positive}
We may naturally identify $L^p(\M;S^1_n)$ with $L^p(M_n\overline{\otimes}\M)$ 
as vector spaces (the norms on these two spaces are however different).
Let $L^p(\M;S^1_n)^+$ be the set of all the
$[x_{ij}]_{1\leq i,j\leq n}\in L^p(\M;S^1_n)$
which belong (under this identification)
to the positive cone $L^p(M_n\overline{\otimes}\M)^+$. For such a matrix, we have
\begin{equation}\label{Positive2}
\|[x_{ij}]\|_{L^p(\tiny{\M};S^1_n)} = \Bignorm{\sum_{i=1}^n x_{ii}}_p.
\end{equation}
Indeed since $[x_{ij}]_{1\leq i,j\leq n}$ belongs to $L^p(\M;S^1_n)^+$, there exist
a matrix $B=[b_{kj}]_{1\leq k,j\leq n}$ of elements in $L^{2p}(\M)$ such that
$[x_{ij}]=B^*B$, which reads
$$
x_{ij}=\sum_{k=1}^n b_{ki}^* b_{kj},\qquad 1\leq i,j\leq n.
$$
Then with $a_{ik}=b_{ki}^*$, we have 
$$
\Bignorm{\sum_{i,k=1}^n a_{ik} a_{ik}^*}_p = 
\Bignorm{\sum_{k,j=1}^n b_{kj}^* b_{kj}}_p
=\Bignorm{\sum_{i=1}^n x_{ii}}_p.
$$
This implies the inequality $\leq$ in (\ref{Positive2}). 

The converse inequality (which is true without any positivity assumption) follows
from the fact that if $x_{ij} = \sum_{k} a_{ik}b_{kj}$ for any $1\leq i,j\leq n$
and some $a_{ik},b_{kj}\in L^{2p}(\M)$, then 
$$
\Bignorm{\sum_{i=1}^n x_{ii}}_p = \Bignorm{\sum_{i,k} a_{ik}b_{ki}}_p
\leq \Bignorm{\sum_{i,k=1}^n a_{ik} a_{ik}^*}_p^{\frac12}
\Bignorm{\sum_{i,k=1}^n b_{ki}^* b_{ki}}_p^{\frac12},
$$
by H\"older's inequality.
\end{remark}

We now establish an injectivity property of the $L^p(\M;S^1_n)$-norms.

\begin{lemma}\label{3.7}
Assume that $e\in\M$ is a projection with finite trace. Let $n\geq1$ be an integer. 
For any matrix $[x_{ij}]_{1\leq i,j\leq n}$ 
of elements in $L^p(e\M e)$, we have 
\begin{align}\label{pis}
\norm{[x_{ij}]}_{L^p(e \tiny{\M} e;S^1_n)}=\norm{[x_{ij}]}_{L^p(\tiny{\M};S^1_n)}.
\end{align} 
\end{lemma}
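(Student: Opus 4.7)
The inequality $\norm{[x_{ij}]}_{L^p(\M;S^1_n)}\leq\norm{[x_{ij}]}_{L^p(e\M e;S^1_n)}$ is immediate: any admissible decomposition of $[x_{ij}]$ via families in $L^{2p}(e\M e)$ is also admissible in $L^{2p}(\M)$, and the resulting row/column sums lie in $L^p(e\M e)$, where the two $L^p$-norms coincide.

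For the reverse inequality, by homogeneity it suffices to show that $\norm{[x_{ij}]}_{L^p(\M;S^1_n)}<1$ implies $\norm{[x_{ij}]}_{L^p(e\M e;S^1_n)}<1$. Via Lemma \ref{P21}, pick finite families $(a_{ik})$ and $(b_{kj})$ in $L^{2p}(\M)$ with $x_{ij}=\sum_k a_{ik}b_{kj}$ and both $\norm{\sum_{i,k}a_{ik}a_{ik}^*}_p$ and $\norm{\sum_{k,j}b_{kj}^*b_{kj}}_p$ strictly less than $1$. Since $x_{ij}=ex_{ij}e$, rewrite $x_{ij}=\sum_k(ea_{ik})(b_{kj}e)$. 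The row analog of Lemma \ref{pdc} produces $ea_{ik}=Aw_{ik}$ with $A:=\bigl(\sum_{i,k}ea_{ik}a_{ik}^*e\bigr)^{1/2}$ and a row family $(w_{ik})$ in $\M$ of norm at most $1$. Since $A^2\in L^p(e\M e)^+$ commutes with $e$, functional calculus places $A$ in $L^{2p}(e\M e)^+$ with $\norm{A}_{2p}<1$. Using $A=Ae$, we may replace $w_{ik}$ by $ew_{ik}$ without altering the product, so we may assume $w_{ik}=ew_{ik}$. An analogous argument on the column side produces $B\in L^{2p}(e\M e)^+$ with $\norm{B}_{2p}<1$ and a column family $(w'_{kj})$ in $\M$ with $w'_{kj}=w'_{kj}e$ and $b_{kj}e=w'_{kj}B$.

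Setting $y_{ij}:=\sum_k w_{ik}w'_{kj}$ yields $x_{ij}=Ay_{ij}B$, and the support conditions force $y_{ij}\in e\M e$. This factorization shows, via \cite[Corollary 12.4]{P3}, that $\norm{[y_{ij}]}_{\delta,\M}\leq 1$. To transfer to the delta-norm in $e\M e$, note that $\Phi_e(z):=eze$ is a CP contraction $\M\to e\M e$ which fixes $[y_{ij}]$ pointwise; submultiplicativity of the decomposable norm (which, by \cite[Corollary 12.4]{P3}, coincides with $\|\cdot\|_\delta$) then gives $\norm{[y_{ij}]}_{\delta,e\M e}\leq\norm{[y_{ij}]}_{\delta,\M}\leq 1$. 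Applying the factorization identity (\ref{eqnorm}) inside $e\M e$ to $x_{ij}=Ay_{ij}B$ now yields $\norm{[x_{ij}]}_{L^p(e\M e;S^1_n)}\leq\norm{A}_{2p}\norm{[y_{ij}]}_{\delta,e\M e}\norm{B}_{2p}<1$.

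The key subtlety is that naive corner compressions $a_{ik}\mapsto ea_{ik}e$ destroy the product identity, since an unwanted extra projection $e$ is introduced between successive factors; polar decomposition is what allows the missing projection to be absorbed into the central matrix $[y_{ij}]$ while keeping the outer factors $A,B$ in $L^{2p}(e\M e)$. The remaining control on the delta-norm of $[y_{ij}]$ in the corner comes from interpreting $\|\cdot\|_\delta$ as a decomposable norm together with the CP-contractivity of $\Phi_e$.
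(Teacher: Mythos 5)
Your proof is correct, but it takes a genuinely different route from the paper's. The paper first reduces by density to the case where each $x_{ij}$ lies in $e\M e$, extracts a factorization $x_{ij}=ay_{ij}b$ from (\ref{eqnorm}), compresses the outer factors to $a'=\lvert a^*e\rvert$, $b'=\lvert be\rvert$ by polar decomposition, and then — this is where the finite-trace hypothesis enters — perturbs to $a'+\epsilon e$, $b'+\epsilon e$ so as to obtain invertible elements of $e\M e$; the invertibility is what forces the middle matrix $z_{ij}=(a'+\epsilon e)^{-1}x_{ij}(b'+\epsilon e)^{-1}$ to consist of \emph{bounded} elements of $e\M e$, after which the dec-norm is controlled by \cite[(11.4)]{P3}. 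You instead start from the finite-sum description of Lemma \ref{P21}, apply the polar decomposition principle of Lemma \ref{pdc} directly to the compressed families $(ea_{ik})$ and $(b_{kj}e)$ to peel off outer factors $A,B\in L^{2p}(e\M e)$, and obtain boundedness of the middle matrix for free since $y_{ij}=\sum_k w_{ik}w'_{kj}$ is a finite sum of products of elements of $\M$; the transfer of the delta norm to the corner via the completely positive compression $\Phi_e$ and submultiplicativity of $\decnorm{\cdot}$ is a legitimate substitute for the paper's use of \cite[(11.4)]{P3}. Each step checks out: $A^2=e\bigl(\sum_{i,k}a_{ik}a_{ik}^*\bigr)e$ does lie in $L^p(e\M e)^+$ with $\norm{A}_{2p}<1$, the support adjustments $w_{ik}\mapsto ew_{ik}$, $w'_{kj}\mapsto w'_{kj}e$ preserve the products and the row/column bounds, and they place $y_{ij}$ in $e\M e$. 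A noteworthy by-product of your argument is that it nowhere uses the finite-trace assumption on $e$ (nor the density reduction to bounded entries), so it establishes the injectivity property for an arbitrary projection; the paper's proof genuinely needs $e\in L^{2p}(\M)$ for the $\epsilon e$ perturbation, though the lemma is only ever applied to finite-trace projections.
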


\begin{proof} The inequality $\geq$ is obvious. To prove the converse, 
it suffices, by density of $e\M e$ in $L^p(e\M e)$, to verify  
the inequality $\leq$ in (\ref{pis}) when each $x_{ij}$
belongs to $e\M e$. Assume this property, along with
$\norm{[x_{ij}]}_{L^p(\tiny{\M};S^1_n)} <1$.

By (\ref{eqnorm}),
there exist $a$ and $b$ in $L^{2p}(\M)$ and a matrix $[y_{ij}]_{1\leq i,j\leq n}$ 
of elements in $\M$ such that 
$x_{ij}=a y_{ij} b$ for any $1\leq i,j\leq n$,
$\|a\|_{2p}<1$, $\|b\|_{2p}<1$ and $\left\|[y_{ij}]\right\|_{_{\delta}}<1$. 
By assumption, $ex_{ij}e=x_{ij}$ hence we actually have
$x_{ij}= ea y_{ij} be$ for any $1\leq i,j\leq n$. 
Using polar decompositions we can write $be=wb'$ 
and $ea=a'v$, 
with $b'=\vert be\vert$, $a'=\vert a^* e\vert$ and
$v,w\in \M$ such that $\norm{v}\leq 1$ and $\norm{w}\leq 1$.
Note that $a',b'\in L^{2p}(e\M e)^+$ and that
$\norm{a'}_{2p}<1$ and $\norm{b'}_{2p}<1$.
It follows from these factorizations that
\begin{align}\label{d1}
x_{ij}=a'v y_{ij} wb',\qquad 1\leq i,j\leq n.
\end{align}

Since $e$ has a finite trace, it belongs to $L^{2p}(\M)$ hence we can choose
$\epsilon>0$ such that 
\begin{equation}\label{Prime}
\norm{a'+\epsilon e }_{2p}<1
\qquad\hbox{and}\qquad
\norm{b'+\epsilon e}_{2p}<1.
\end{equation}
Both $a'+\epsilon e$ and $b'+\epsilon e$ have an inverse in $e\M e$. Then we can define
\begin{equation}\label{Def-z}
z_{ij} =(a'+\epsilon e)^{-1}  x_{ij} (b'+\epsilon e)^{-1},\qquad 1\leq i,j\leq n.
\end{equation}
Since each $x_{ij}$ belongs to $e\M e$, each $z_{ij}$ belongs to $e\M e$ as well.
Further we have 
\begin{equation}\label{Facto}
x_{ij}= (a'+\epsilon e)  z_{ij} (b'+\epsilon e),\qquad 1\leq i,j\leq n.
\end{equation}

Let us now show that 
\begin{equation}\label{z-y}
\left\|[z_{ij}]\right\|_{\delta}
\leq \left\|[y_{ij}]\right\|_{\delta}.
\end{equation}
Here the delta norm on the left-hand side is computed in
$e\M e\otimes S^1_n$ whereas the  delta norm on the right-hand side is computed in
$\M \otimes S^1_n$.
We observe that since $a'\in L^{2p}(e\M e)^+$, $(a'+\epsilon e)^{-1}a'$ belongs to 
$e\M e$ and we have $\norm{(a'+\epsilon e)^{-1}a'}_\infty\leq 1$. Likewise, we have
$\norm{b'(b'+\epsilon e)^{-1}}_\infty\leq 1$. This implies that
\begin{equation}\label{Cont}
\norm{(a'+\epsilon e)^{-1}a'v}_\infty\leq 1
\qquad\hbox{and}\qquad
\norm{wb'(b'+\epsilon e)^{-1}}_\infty\leq 1.
\end{equation}
Let $\theta\colon M_n\to \M$ be the linear map associated with $[y_{ij}]$
and let $\varphi\colon M_n\to e\M e$ be associated with $[z_{ij}]$. By
(\ref{d1}) and (\ref{Def-z}), we have $z_{ij} =(a'+\epsilon e)^{-1}a'v y_{ij} 
wb'(b'+\epsilon e)^{-1}$ for any $1\leq i,j\leq n$. Hence 
$$
\varphi(s) = \bigl[(a'+\epsilon e)^{-1}a'v\bigr]\theta(s) \bigl[wb'
(b'+\epsilon e)^{-1}\bigr],\quad s\in M_n.
$$
It therefore follows from e.g. \cite[(11.4)]{P3} and (\ref{Cont}) that 
$\decnorm{\varphi}\leq \decnorm{\theta}$.

Since $\decnorm{\theta} = \|[y_{ij}]\|_{_{\delta}}$
and $\decnorm{\varphi} = \|[z_{ij}]\|_{_{\delta}}$, by \cite[Corollary 12.4]{P3},
this yields (\ref{z-y}).

Now combining (\ref{Facto}), (\ref{Prime}) and (\ref{z-y}), and using (\ref{eqnorm})
in $L^p(e\M e;S^1_n)$, we obtain that $\bignorm{[x_{ij}]}_{L^p(e\tiny{\M}e;S^1_n)} <1$.
This proves the result.
\end{proof}

For any semifinite and hyperfinite von Neumann algebra $\M$, and for any operator space $E$, 
Pisier \cite[Chapter 3]{P5} introduced a vector valued noncommutative 
$L^p$-space, that we denote by $L^p(\M)[E]$. The next statement shows that Definition \ref{S1}
is consistent with \cite{P5}.

\begin{proposition}\label{Pisier}
Let $\M$ be a semifinite and hyperfinite von Neumann algebra, and let $1\leq p<\infty$.
Equip the spaces $S^1$ and $S^1_n$ with their natural operator space structures (see e.g. \cite[Section 9.3]{ER}).
Then 
$$
L^p(\M;S^1) = L^p(\M)[S^1]
\qquad\hbox{and}\qquad
L^p(\M;S^1_n)=L^p(\M)[S^1_n]
$$
isometrically, for all $n\geq 1$.
\end{proposition}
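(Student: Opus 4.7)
The plan has two parts: establish the finite-dimensional identity $L^p(\M;S^1_n) = L^p(\M)[S^1_n]$ by directly matching the two norm formulas, then extend to $S^1$ by a density argument.

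\textbf{Finite case.} First I would recall Pisier's factorization description of $L^p(\M)[S^1_n]$ from \cite[Chapter 3]{P5}, namely that for any matrix $[x_{ij}]$ of elements in $L^p(\M)$,
$$
\|[x_{ij}]\|_{L^p(\M)[S^1_n]} = \inf\bigl\{\|a\|_{2p}\,\|[y_{ij}]\|_{\M[S^1_n]}\,\|b\|_{2p}\bigr\},
$$
where the infimum runs over factorizations $x_{ij}=a y_{ij} b$ with $a,b\in L^{2p}(\M)$ and $[y_{ij}]\in\M\otimes S^1_n$, and where $\M[S^1_n]=L^\infty(\M)[S^1_n]$ carries Pisier's operator space norm. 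In the hyperfinite setting the norm on $\M[S^1_n]$ coincides with Pisier's delta norm (this is \cite[Corollary 12.4]{P3}, which is precisely what underlies formula (\ref{eqnorm})). Substituting into the display above yields exactly (\ref{eqnorm}), so the norms on $L^p(\M;S^1_n)$ and $L^p(\M)[S^1_n]$ agree; since both sides have the same algebraic core $L^p(\M)\otimes S^1_n$, the identification is isometric.

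\textbf{Extension to $S^1$.} The canonical inclusions $S^1_n\hookrightarrow S^1$ are complete isometries, so by functoriality of Pisier's construction they induce isometric embeddings $L^p(\M)[S^1_n]\hookrightarrow L^p(\M)[S^1]$ with dense union. On the paper's side, the paragraph following Definition \ref{S1} already observes that $\bigcup_n L^p(\M;S^1_n)$ is dense in $L^p(\M;S^1)$, and Lemma \ref{P21} together with Remark \ref{algebraic} make clear that the inclusions $L^p(\M;S^1_n)\hookrightarrow L^p(\M;S^1)$ are isometric. Combining this compatible dense filtration with the finite-level isometric identification and passing to completions delivers $L^p(\M;S^1)=L^p(\M)[S^1]$.

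\textbf{Main obstacle.} The heart of the proof is the finite case: one has to match Pisier's definition of $L^p(\M)[S^1_n]$ (built in \cite{P5} via complex interpolation between $\M\otimes_{\min} S^1_n$ and the projective-type predual $L^1(\M)[S^1_n]$, which relies crucially on hyperfiniteness of $\M$) with the concrete factorization formula through $L^{2p}(\M)$, and then recognize the resulting $\M[S^1_n]$-norm as the delta norm. Once this reconciliation is made precise, the rest of the proof is a one-line comparison with (\ref{eqnorm}) and a routine density/completion argument.
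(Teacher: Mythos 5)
Your finite-case argument is essentially the paper's: both reduce the comparison to matching Pisier's factorization formula for $L^p(\M)[S^1_n]$ against (\ref{eqnorm}), using hyperfiniteness to identify the $\min$-norm with the $\delta$-norm on $\M\otimes S^1_n$. (One small imprecision: \cite[Corollary 12.4]{P3} only gives $\norm{[y_{ij}]}_{\delta}=\decnorm{\theta}$; to conclude $\norm{[y_{ij}]}_{\delta}=\norm{[y_{ij}]}_{\min}$ you also need $\norm{[y_{ij}]}_{\min}=\norm{\theta}_{cb}$ together with Haagerup's theorem \cite{H} that $\norm{\theta}_{cb}=\norm{\theta}_{dec}$ for maps into an injective von Neumann algebra; the hyperfiniteness enters exactly there, not in Corollary 12.4 itself.) The passage from $S^1_n$ to $S^1$ by density is also the same in both arguments and is unproblematic.

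The genuine gap is in the scope of the factorization formula you invoke. The description of $\norm{\cdot}_{L^p(\M)[S^1_n]}$ as $\inf\{\norm{a}_{2p}\norm{[y_{ij}]}_{\min}\norm{b}_{2p}\}$ comes from the definition of $\Lambda_p(E)$ and \cite[Theorem 3.8]{P5}, which are established for \emph{finite} von Neumann algebras; you apply it directly to a general semifinite hyperfinite $\M$, and that step is not justified. The paper's proof therefore proceeds in two stages: it first proves the identity when $\M$ is finite, and then handles general semifinite $\M$ by working on the corners $e\M e$ for finite-trace projections $e$, using Lemma \ref{3.7} (the equality $\norm{[x_{ij}]}_{L^p(e\M e;S^1_n)}=\norm{[x_{ij}]}_{L^p(\M;S^1_n)}$, whose proof is itself a nontrivial manipulation with the $\delta$-norm) together with the corresponding compatibility of Pisier's spaces under such corners (\cite[Theorem 3.4]{P3}), and finally the density of $\bigcup_e e\M e$ in $L^p(\M)$. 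Without this reduction, or an independent argument that the factorization formula persists in the semifinite setting, your proof only covers the finite case. Everything else in your outline is sound.
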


\begin{proof}
We assume that the semifinite von Neumann algebra
$\M$ is hyperfinite. By density it suffices to prove that
for any $n\geq 1$ and for
any matrix $[x_{ij}]_{1\leq i,j\leq n}$ of elements in $L^p(\M)$, we have
\begin{equation}\label{Iso}
\norm{[x_{ij}]}_{L^p(\tiny{\M};S^1_n)} 
=\norm{[x_{ij}]}_{L^p(\tiny{\M})[S^1_n]}.
\end{equation}

Assume first that $\M$ is finite.
For any matrix $[y_{ij}]_{1\leq i,j\leq n}$ of elements in $\M$, 
let $\norm{[y_{ij}]}_{\rm min}$ denote its norm
in the minimal tensor product $\M\otimes_{\rm min} S^1_n$. 
It follows from 
the definition of $\Lambda_p(E)$ in \cite[p.41]{P5}
and from \cite[Theorem 3.8]{P5} that for any 
matrix $[x_{ij}]_{1\leq i,j\leq n}$ of elements in $L^p(\M)$, we have
$$
\norm{[x_{ij}]}_{L^p(\tiny{\M})[S^1_n]} = 
\inf\bigl\{\norm{a}_{2p} \norm{[y_{ij}]}_{\rm min} \norm{b}_{2p}\bigr\},
$$
where the infimum runs over all $a,b\in L^{2p}(\M)$ and all matrices $[y_{ij}]$
of elements in $\M$ such that $x_{ij}=ay_{ij}b$ for any $1\leq i,j\leq n$.
Since $\M$ is hyperfinite, 
hence injective, we have
$$
\norm{[y_{ij}]}_\delta\,=\, \norm{[y_{ij}]}_{\rm min}
$$
for any such $[y_{ij}]$.
This follows from the fact that if $\theta\colon M_n\to\M$ is the linear map associated
with $[y_{ij}]$, then $\norm{[y_{ij}]}_{\rm min}=\norm{\theta}_{cb}$, 
$\norm{[y_{ij}]}_{\delta}=\norm{\theta}_{dec}$, as mentioned above,
and $\norm{\theta}_{cb}=\norm{\theta}_{dec}$ (see \cite{H}).
Applying (\ref{eqnorm}), we deduce the equality (\ref{Iso}) in that case.

For a possibly non finite $\M$, consider 
$V=\cup e\M e\,$, where the union runs over all projections $e$
in $\M$ with finite trace. The finite case considered above shows that
$$
L^p(e\M e;S^1_n)=L^p(e\M e)[S^1_n]
$$
isometrically, for any such $e$. 
Applying Lemma \ref{3.7} and \cite[Theorem 3.4]{P3}, this implies that
(\ref{Iso}) holds true whenever $x_{ij}\in V$ for all $1\leq i,j\leq n$.  Since
$V$ is dense in $L^p(\M)$, this yields (\ref{Iso}) for any $x_{ij}\in L^p(\M)$.
\end{proof}

In the sequel we consider a second semifinite von Neumann algebra $\N$.
Recall the embedding (\ref{Embed}) from Remark \ref{algebraic}.

\begin{definition}
Let $1\leq p<\infty$ and let 
$T\colon L^p(\mathcal{M})\to L^p(\mathcal{N})$ be a bounded map. 
We say that $T$ is 
\begin{enumerate}[(i)]
\item $S^1$-bounded if $T\otimes I_{S^1}$ extends to a bounded map 
$$
T\overline{\otimes}I_{S^1}\colon L^p(\M;S^1)\longrightarrow L^p(\N;S^1).
$$
In this case, the norm of $T\overline{\otimes}I_{S^1}$ 
is called the $S^1$-bounded norm of $T$ and is denoted by $\|T\|_{S^1}$;
\item $S^1$-contractive if it is $S^1$-bounded and $\|T\|_{S^1}\leq1$.
\end{enumerate}
\end{definition}

\begin{remark}\label{n}
It is plain that $T\colon L^p(\mathcal{M})\to L^p(\mathcal{N})$ is $S^1$-bounded
if and only if there exists 
a constant $K\geq 0$ such that 
$$
\bignorm{T \otimes I_{S^1_n}\colon L^p(\M;S^1_n)\longrightarrow L^p(\N;S^1_n)}\,\leq K
$$
for any $n\geq 1$. In this case, $\|T\|_{S^1}$ is the smallest $K\geq 0$ satisfying this property.
\end{remark}

\begin{remark}\label{p=1}
We have natural isometric identifications 
$$
L^2(\M;\{\ell^2_{\Ndb^2}\}_r)
=L^2(\M;\{\ell^2_{\Ndb^2}\}_c)= L^2(B(\ell^2)\overline{\otimes}\M).
$$
They imply that
$$
L^1(\M;S^1) = L^1(B(\ell^2)\overline{\otimes}\M)
\qquad\hbox{isometrically}.
$$
Consequently, a bounded map $T\colon L^1(\mathcal{M})\to L^1(\mathcal{N})$
is $S^1$-bounded if and only if $T$ is completely bounded and 
$\|T\|_{S^1}=\norm{T}_{cb}$ in this case.
\end{remark}

Assume that $\M,\N$ are two semifinite and hyperfinite von Neumann algebras,
and let $T\colon L^p(\mathcal{M})\to L^p(\mathcal{N})$ be a bounded map.
We say that $T$ is 
completely regular if there 
exists a constant $K\geq 0$ such that for any $n\geq 1$,
$$
\bignorm{T\otimes I_{M_n}\colon L^p(\mathcal{M})[M_n]
\longrightarrow  L^p(\mathcal{N})[M_n]}\,\leq K.
$$
In this case, the completely regular
norm $\norm{T}_{reg}$ is defined as 
the least possible $K$ satisfying this property.
This concept was introduced in \cite{P2}. 
It is shown in the latter paper that if $T$ is 
completely regular, then for any operator space 
$E$, $T\otimes I_E$ extends to a bounded operator $T\overline{\otimes} I_E$
from $L^p(\mathcal{M})[E]$ into $L^p(\mathcal{N})[E]$, with 
\begin{equation}\label{Reg-E}
\bignorm{T\overline{\otimes} I_E\colon L^p(\mathcal{M})[E]\longrightarrow  
L^p(\mathcal{N})[E]}\,\leq \norm{T}_{reg}.
\end{equation}
We refer to \cite{JR} and \cite{AK} for developments and further results.

\begin{proposition}\label{3.8}
Suppose that $\M$ and $\N$ are semifinite and hyperfinite 
von Neumann algebras, 
let $1\leq p<\infty$ and let 
$T\colon L^p(\M)\to L^p(\N)$ be a bounded operator. Then 
$T$ is $S^1$-bounded if and only if $T$ is completely regular and
in this case, we have $\|T\|_{S^1}=\norm{T}_{reg}$.
\end{proposition}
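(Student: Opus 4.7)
The plan is to use Proposition~\ref{Pisier} to identify $L^p(\M;S^1_n)$ with Pisier's space $L^p(\M)[S^1_n]$ in the hyperfinite setting, and then to swap $S^1_n$ with $M_n$ via operator-space duality, using (\ref{Reg-E}) as the catalyst.

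\emph{Easy direction.} By Remark~\ref{n} and Proposition~\ref{Pisier},
$$
\|T\|_{S^1}\,=\,\sup_n\bignorm{T\otimes I_{S^1_n}\colon L^p(\M)[S^1_n]\to L^p(\N)[S^1_n]}.
$$
Applying (\ref{Reg-E}) with $E=S^1$ therefore gives $\|T\|_{S^1}\leq\|T\|_{reg}$, so every completely regular $T$ is $S^1$-bounded with $\|T\|_{S^1}\leq\|T\|_{reg}$.

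\emph{Hard direction.} Suppose $T$ is $S^1$-bounded, and denote by $T^\circ\colon L^{p'}(\N^{\mathrm{op}})\to L^{p'}(\M^{\mathrm{op}})$ the trace-adjoint of $T$. Pisier's duality \cite[Chapter~3]{P5} provides a natural isometric identification $(L^p(\M)[E])^*\simeq L^{p'}(\M^{\mathrm{op}})[E^*]$ for every finite-dimensional operator space $E$, and it identifies the Banach-space adjoint of $T\otimes I_E$ with $T^\circ\otimes I_{E^*}$. Specializing to $E=S^1_n$ (so $E^*=M_n$) yields, for every $n$,
$$
\bignorm{T\otimes I_{S^1_n}\colon L^p(\M)[S^1_n]\to L^p(\N)[S^1_n]}
=\bignorm{T^\circ\otimes I_{M_n}\colon L^{p'}(\N^{\mathrm{op}})[M_n]\to L^{p'}(\M^{\mathrm{op}})[M_n]}.
$$
Taking the supremum over $n$ shows that $T^\circ$ is completely regular with $\|T^\circ\|_{reg}=\|T\|_{S^1}$. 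Since $\M^{\mathrm{op}}$ and $\N^{\mathrm{op}}$ are still semifinite and hyperfinite, the easy direction applied to $T^\circ$ gives $\|T^\circ\|_{S^1}\leq\|T^\circ\|_{reg}=\|T\|_{S^1}$. Finally, specializing the same duality to $E=M_n$ (so $E^*=S^1_n$) gives
$$
\bignorm{T\otimes I_{M_n}\colon L^p(\M)[M_n]\to L^p(\N)[M_n]}
=\bignorm{T^\circ\otimes I_{S^1_n}\colon L^{p'}(\N^{\mathrm{op}})[S^1_n]\to L^{p'}(\M^{\mathrm{op}})[S^1_n]}
$$
for each $n$; taking the sup delivers $\|T\|_{reg}=\|T^\circ\|_{S^1}\leq\|T\|_{S^1}$, which is the missing inequality. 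Combining both directions yields $\|T\|_{S^1}=\|T\|_{reg}$.

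The main technical burden is to set up Pisier's operator-space duality $(L^p(\M)[E])^*\simeq L^{p'}(\M^{\mathrm{op}})[E^*]$ carefully — in particular, tracking the role of $\M^{\mathrm{op}}$ via trace reversal and verifying that the Banach-space adjoint of $T\otimes I_E$ corresponds, under this duality, to $T^\circ\otimes I_{E^*}$. But this is standard in Pisier's framework, and once it is in hand the identity $(S^1_n)^*=M_n$ does all of the remaining work.
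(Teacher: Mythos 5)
Your proof follows essentially the same route as the paper's: both directions rest on Proposition \ref{Pisier}, the easy direction on (\ref{Reg-E}), and the hard direction on Pisier's duality $(L^p(\M)[S^1_n])^*\cong L^{p'}(\M)[M_n]$ applied to the adjoint of $T$. (The paper cites \cite[Theorem 4.1]{P5} for this duality, stated without the opposite algebra; your $\M^{\mathrm{op}}$ bookkeeping is a variant of the same pairing and does no harm, since $\M^{\mathrm{op}}$, $\N^{\mathrm{op}}$ remain semifinite and hyperfinite.) The one genuine divergence is the final step. Having obtained $\norm{T^*\otimes I_{M_n}}\leq\norm{T}_{S^1}$ for all $n$, i.e.\ that the adjoint is completely regular with regular norm at most $\norm{T}_{S^1}$, the paper simply invokes \cite[Lemma 2.3]{P2}, which asserts that $T$ is completely regular exactly when $T^*$ is, with equal regular norms. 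You instead close the loop by a second application of the vector-valued duality, this time with $E=M_n$ (so $E^*=S^1_n$), transferring the regular norm back to $T$ through $\norm{T\otimes I_{M_n}}=\norm{T^\circ\otimes I_{S^1_n}}$. This is valid and has the merit of being self-contained -- in effect you reprove the self-duality of the regular norm in the finite-dimensional setting -- at the cost of a second appeal to the duality theorem, whose precise form (including the role of the opposite algebra and the identification of adjoints) you correctly flag as the technical burden but do not carry out.

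There is one point you must address: your argument as written only covers $1<p<\infty$. For $p=1$ the conjugate exponent is $\infty$, the spaces $L^{p'}(\cdot)[E]$ degenerate to minimal tensor products with von Neumann algebras, and both applications of the duality (as well as the meaning of complete regularity for the adjoint) require separate justification. The paper treats this case apart, using Remark \ref{p=1}: for $p=1$ one has $L^1(\M;S^1)=L^1(B(\ell^2)\overline{\otimes}\M)$ isometrically, so $S^1$-boundedness coincides with complete boundedness with equal norms, and the comparison with the regular norm is then made at that level. You should either incorporate this endpoint case explicitly or restrict your duality argument to $p>1$ and supply the $p=1$ case along the paper's lines.
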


\begin{proof}
Suppose that $T$ is $S^1$-contractive. 
By Proposition \ref{Pisier}, we have 
\begin{align}\label{eqp1}
\|T\otimes I_{S^1_n}\colon L^p(\M)[S^1_n]\longrightarrow L^p(\N)[S^1_n]\|\leq \|T\|_{S^1}.
\end{align}
for every $n\geq 1$.
Assume that $p>1$ and let $p'=p/(p-1)$ be the conjugate number of $p$.  
By \cite[Theorem 4.1]{P5}, we both have 
$$
\bigl(L^p(\M)[S^1_n]\bigr)^*\cong L^{p^{\prime}}(\M)[M_n] \qquad\text{and}\qquad 
\bigl(L^p(\N)[S^1_n]\bigr)^*\cong L^{p^{\prime}}(\N)[M_n].
$$
isometrically.
Passing to the adjoint in (\ref{eqp1}), we obtain that
$$
\|T^*\otimes I_{M_n}\colon L^{p^{\prime}}(\N)[M_n]\longrightarrow 
L^{p^{\prime}}(\M)[M_n]\|\leq \|T\|_{S^1}
$$
for every $n\geq 1$.
Thus $T^*$ is completely regular, with $\norm{T^*}_{reg}\leq \|T\|_{S^1}$.
It now follows from \cite[Lemma 2.3]{P2} that $T$ is completely regular as well,
with $\norm{T}_{reg}\leq \|T\|_{S^1}$. The case $p=1$ is proved similarly, 
using Remark \ref{p=1}.

The converse is clear, using Proposition \ref{Pisier} again.
\end{proof}

\begin{remark}\label{l1}
Junge's space $L^p(\M;\ell^1)$ from \cite{J} coincides
with the subspace of $L^p(\M;S^1)$
of matrices $[x_{ij}]_{i,j\geq 1}$ such that $x_{ij}=0$ for any $i\not= j$. 
In \cite[Definition 2.5]{LMZ}, we introduced $\ell^1$-boundedness
by saying that a bounded map
$T\colon L^p(\mathcal{M})\to L^p(\mathcal{N})$ is $\ell^1$-bounded if 
$T\otimes I_{\ell^1}$ extends to a bounded map 
from $L^p(\M;\ell^1)$ into $L^p(\N;\ell^1)$. 
It is plain that any $S^1$-bounded map $T$ is $\ell^1$-bounded, with $\norm{T}_{\ell^1}
\leq \norm{T}_{S^1}$. However 
\cite[Example 2.7]{LMZ} shows that the converse is not true. 
\end{remark}

We now state the main result of this section.

\begin{theorem}\label{ls1}
Suppose that $T\colon L^p(\M)\to L^p(\N)$ is a completely positive operator.
Then $T$ is $S^1$-bounded and $\|T\|_{S^1}=\|T\|$.	
\end{theorem}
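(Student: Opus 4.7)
The inequality $\|T\|\leq\|T\|_{S^1}$ is immediate from the $1\times 1$ case, so the content lies in proving $\|T\|_{S^1}\leq\|T\|$. By Remark \ref{n}, this reduces to showing $\|T\otimes I_{S^1_n}\colon L^p(\M;S^1_n)\to L^p(\N;S^1_n)\|\leq\|T\|$ for every $n\geq 1$. We fix $n$ and take $[x_{ij}]$ in the open unit ball of $L^p(\M;S^1_n)$. By Lemma \ref{P21}, we may factor $x_{ij}=\sum_{k=1}^m a_{ik}b_{kj}$ with $\bigl\|\sum_{i,k}a_{ik}a_{ik}^*\bigr\|_p<1$ and $\bigl\|\sum_{j,k}b_{kj}^*b_{kj}\bigr\|_p<1$.

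The main idea is to realize $[x_{ij}]$ as the off-diagonal block of a \emph{positive} matrix. Setting $A=(a_{ik})$ and $B=(b_{kj})$, we form
$$
P=\begin{pmatrix}A\\ B^*\end{pmatrix}\begin{pmatrix}A^* & B\end{pmatrix}=\begin{pmatrix}AA^* & AB\\ B^*A^* & B^*B\end{pmatrix}\in L^p(M_{2n}\overline{\otimes}\M)^+,
$$
whose $(1,2)$ block equals $AB=[x_{ij}]$. Applying $I_{M_{2n}}\otimes T$ entrywise, complete positivity of $T$ gives $\widetilde P:=(I_{M_{2n}}\otimes T)(P)\in L^p(M_{2n}\overline{\otimes}\N)^+$, with $(1,2)$ block $[T(x_{ij})]$ and positive diagonal blocks which are the entrywise $T$-images of $AA^*$ and $B^*B$.

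The key step, which we expect to be the main technical point, is the following inequality for positive $2\times 2$ block matrices: for any $\begin{pmatrix}\alpha & \beta\\ \beta^* & \gamma\end{pmatrix}\in L^p(M_{2n}\overline{\otimes}\N)^+$,
$$
\|\beta\|_{L^p(\N;S^1_n)}\leq\|\alpha\|_{L^p(\N;S^1_n)}^{1/2}\,\|\gamma\|_{L^p(\N;S^1_n)}^{1/2}.
$$
To prove it we write the matrix as $R^*R$ with $R\in L^{2p}(M_{2n}\overline{\otimes}\N)$, split $R$ into its $n\times n$ blocks $R_{ij}$, read off the factorization $\beta=R_{11}^*R_{12}+R_{21}^*R_{22}$ (a sum-of-products factorization of $\beta$ of length $2n$), and compare the associated $L^{2p}$-row and $L^{2p}$-column norms with the diagonal sums of $\alpha=R_{11}^*R_{11}+R_{21}^*R_{21}$ and $\gamma=R_{12}^*R_{12}+R_{22}^*R_{22}$ via the positivity formula (\ref{Positive2}).

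Applying this inequality to $\widetilde P$, together with (\ref{Positive2}) once more to compute $\|(I_{M_n}\otimes T)(AA^*)\|_{L^p(\N;S^1_n)}=\|T(\sum_{i,k}a_{ik}a_{ik}^*)\|_p\leq\|T\|\cdot\|\sum_{i,k}a_{ik}a_{ik}^*\|_p$, and the symmetric bound for $B^*B$, yields $\|[T(x_{ij})]\|_{L^p(\N;S^1_n)}\leq\|T\|$. Taking the infimum over factorizations of $[x_{ij}]$ (via Lemma \ref{P21}) completes the proof.
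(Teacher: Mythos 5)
Your proposal is correct and follows essentially the same route as the paper: reduce to finite factorizations via Lemma \ref{P21}, embed $[x_{ij}]$ as the off-diagonal block of the positive matrix $\begin{pmatrix}AA^* & AB\\ B^*A^* & B^*B\end{pmatrix}$, push it through $I\otimes T$ using complete positivity, and factor the positive image to read off an $S^1_n$-factorization of $[T(x_{ij})]$ whose row and column norms are controlled by the diagonal traces. The only (cosmetic) difference is that you package the last step as a standalone Cauchy--Schwarz-type inequality $\norm{\beta}_{L^p(\N;S^1_n)}\leq\norm{\alpha}_{L^p(\N;S^1_n)}^{1/2}\norm{\gamma}_{L^p(\N;S^1_n)}^{1/2}$ for positive block matrices, proved by an arbitrary factorization $R^*R$, whereas the paper works directly with the positive square root of the image.
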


\begin{proof}
Let $T\colon L^p(\M)\to L^p(\N)$ be a completely positive operator. 
Fix some $n\geq1$. 
Let $x=[x_{ij}]_{1\leq i,j\leq n}$
be a matrix of elements in $L^p(\M)$ with 
$\|x\|_{L^p(\tiny{\M};S_n^1)} <1$. According to Lemma \ref{P21}, 
there exist an integer $m\geq 1$
and families $(a_{ik})_{1\leq i\leq n,1\leq k\leq m}$ 
and $(b_{kj})_{1\leq k\leq m,1\leq j\leq n}$ in $L^{2p}(\M)$ such that 
$$
\Bignorm{\sum_{i,k} a_{ik}a_{ik}^*}_p<1,
\qquad
\Bignorm{\sum_{k,j=1} b_{kj}^*b_{kj}}_p<1
\qquad\text{and}\qquad
x_{ij}=\sum _{k=1}^m a_{ik}b_{kj}
$$ 
for any $1\leq i,j\leq n$. We introduce 
$$
r_{ij} =\sum_{k=1}^m a_{ik}a_{jk}^*,
\qquad
s_{ij}=\sum_{k=1}^m b_{ki}^* b_{kj}
\qquad\hbox{and}\qquad
z_{ij}=\begin{pmatrix}
r_{ij} & x_{ij}\\
x_{ji}^* & s_{ij}
\end{pmatrix}
$$
for any $1\leq i,j\leq n$. 
Then we set 
$$ 
r=[r_{ij}],
\qquad s=[s_{ij}]
\qquad\hbox{and}\qquad
z=[z_{ij}].
$$
With $x^* = [x^*_{ji}]$, we  may write
$$
z= \begin{pmatrix}
r & x \\
x^*& s
\end{pmatrix}.
$$
Following Remark \ref{Positive} and (\ref{Spn}) 
we regard $x,x^*,r,s$ as elements of $S^{p}_{n}\otimes L^p(\M)
=L^p(M_{n}\overline{\otimes}\M)$ 
and we regard $z$ as an element of $S^{p}_{2n}\otimes L^p(\M) = L^p(M_{2n}\overline{\otimes}\M)$.

Now consider $a=[a_{ik}]_{1\leq i\leq n, 1\leq k\leq m}$ and 
$b=[b_{kj}]_{1\leq k\leq m, 1\leq j\leq n}$, regarded as elements of
$S^{p}_{n,m}\otimes L^p(\M)$ and $S^{p}_{m,n}\otimes L^p(\M)$, respectively,
and let $c=\begin{pmatrix} a \\ b^* \end{pmatrix} \in S^{p}_{2n,m}\otimes L^p(\M)$. 
It follows from the above definitions that 
$$
z= \begin{pmatrix} a\\ b^* \end{pmatrix} 
\begin{pmatrix} a^* & b \end{pmatrix}\, =cc^*,
$$
hence 
$z\in L^{p}(M_{2n}\overline{\otimes}\M)^+$.

Let us write $T_n = I_{S^p_{n}}\otimes T$ for simplicity. By assumption,
$T_{2n}$ is positive hence 
$$
T_{2n}(z) =	\,
\begin{pmatrix}
T_n(r)&T_n(x)\\ T_n(x^*)&T_n(s)
\end{pmatrix}\,\in\, L^p(M_{2n}\overline{\otimes}\M)^+.
$$
Consider the positive square root $(T_{2n}(z))^{1/2}$, which belongs to 
$L^{2p}(M_{2n} \overline{\otimes}\M)^+$. We may write it as
$$
(T_{2n}(z))^{1/2} 
=\begin{pmatrix}
\alpha &\beta\\
\beta^*&\delta
\end{pmatrix},
$$
with $\alpha$, $\beta$, $\delta$ in $L^{2p}(M_n \overline{\otimes} \M)$, 
and $\alpha\geq0$, $\delta\geq0$. Then,
\begin{align}
\label{r} T_n(r)&=\alpha^2+\beta\beta^*;\\ 
\label{s} T_n(s)&=\beta^*\beta+\delta^2;\\
\label{x} T_n(x)&=\alpha\beta+\beta\delta.
\end{align}
Write $\alpha=[\alpha_{ij}]$, $\beta=[\beta_{ij}]$
and $\delta=[\delta_{ij}]$.  Using (\ref{x}), we have
$$
T(x_{ij})=\sum_{k=1}^n\alpha_{ik}\beta_{kj}+
\sum_{k=1}^n\beta_{ik}\delta_{kj},
\quad 1\leq i,j\leq n.
$$
Let
\begin{align*}
a =\left(\sum_{i,k}\alpha_{ik}\alpha_{ik}^*+\sum_{i,k}\beta_{ik}\beta_{ik}^*\right)^{1/2}\	
\qquad\hbox{and}\qquad
b=\left(\sum_{j,k}\beta_{kj}^*\beta_{kj}+\sum_{k,j}\delta_{kj}^*\delta_{kj}\right)^{1/2},
\end{align*} 
then the above factorization implies that  
$$
\|[T(x_{ij})]\|_{L^p(\tiny{\N};S^1_n)}\leq\|a\|_{2p}\|b\|_{2p}.
$$
Now observe that by (\ref{r}) and (\ref{s}), and the fact that $\alpha^*=\alpha$
and $\delta^*=\delta$, we have
$$
T(r_{ii}) = \sum_{k}\alpha_{ik}\alpha_{ik}^* + \beta_{ik}\beta_{ik}^*
\qquad\hbox{and}\qquad
T(s_{jj}) = \sum_{k}\beta_{kj}^*\beta_{kj} +\delta_{kj}^*\delta_{kj}
$$
for any $1\leq i,j\leq n$.
Consequently,
\begin{align*}
\|a\|_{2p}^{2p} &=\Bignorm{\sum_{i,k}\alpha_{ik}
\alpha_{ik}^*+\sum_{i,k}\beta_{ik}\beta_{ik}^*}_p^p\\
&=\Bignorm{\sum_i T(r_{ii})}_p^p\\
& \leq\|T\|^p\Bignorm{\sum_i r_{ii}}^p_p
= \|T\|^p\Bignorm{\sum_{i,k}a_{ik}a_{ik}^*} _p^p\leq\|T\|^p.
\end{align*}
Similarly, we can show that $\|b\|_{2p}^{2p}\leq\|T\|^p$, and therefore
$$
\|[T(x_{ij})]\|_{L^p(\tiny{\N};S^1_n)}\leq\|T\|.
$$
The result follows at once.
\end{proof}

\begin{remark}\label{DecLp}
Let $1\leq p\leq \infty$.
Following \cite{JR, AK} we say that a bounded map
$T\colon L^p(\M)\to L^p(\N)$ is decomposable 
if there exist two bounded maps 
$S_1,S_2 \colon L^p(\M)\to L^p(\N)$ such that 
the mapping 
$$
\Gamma_{S_1,S_2} : = \begin{pmatrix} S_1 & T \\ T_* & S_2\end{pmatrix}\colon L^p(M_2\overline{\otimes}\M)
\longrightarrow L^p(M_2\overline{\otimes}\N)
$$
taking any $\begin{pmatrix} r & x \\ y & s\end{pmatrix}$ to 
$\begin{pmatrix} S_1(r) & T(x) \\ T(y^*)^* & S_2(s)\end{pmatrix}$,
with $x,y,r,s \in L^p(\M)$, is completely posivive. This is equivalent to
$T$ being a linear combination of completely positive maps $L^p(\M)\to L^p(\N)$.
In this case, the decomposable norm of $T$ 
is defined by 
\begin{equation}\label{DECNORM}
\norm{T}_{dec}=\inf\bigl\{\max\{\norm{S_1},\norm{S_2}\}\bigr\},
\end{equation}
where the infimum
is taken over all possible pairs $(S_1,S_2)$ such that $\Gamma_{S_1,S_2}$ is completely positive . 
When $T$ is completely positive, $\Gamma_{T,T}$ is completely positive
and we have $\norm{T}_{dec}=\norm{T}$ in this case.

With these definitions in mind,
it is clear from the proof of Theorem \ref{ls1} that the latter generalizes as follows,
for any $1\leq p<\infty$:
\begin{equation}\label{Dec-case}
\hbox{Any decomposable map}\ T\colon L^p(\M)\to L^p(\N)\ \hbox{is}\  S^1\hbox{-bounded, with}\ 
\norm{T}_{S^1}\leq \norm{T}_{dec}.
\end{equation}

In the special case when $\M,\N$ are hyperfinite, the converse is true, that is, any 
$S^1$-bounded map $T\colon L^p(\M)\to L^p(\N)$ is decomposable, with 
$\norm{T}_{S^1}\leq \norm{T}_{dec}$. This follows from Proposition \ref{3.8} and 
\cite[Theorem 3.23]{AK}. We do not know 
if this property is true for general semifinite von Neumann algebras.
\end{remark}

We finally mention that 
Proposition \ref{3.8} together with \cite[Proposition 2.2]{P2} show that when 
$\M$, $\N$ are semifinite and hyperfinite 
von Neumann algebras, every $S^1$-bounded operator 
$T\colon L^p(\M)\to L^p(\N)$
is completely bounded. We do not know whether this property
is true for general semifinite von Neumann algebras,
except in the trivial cases 
$p=2$ and $p=1$ (see Remark \ref{p=1}).

\section{Separating maps with a direct Yeadon type factorization}
The notion of Yeadon type factorization was introduced in \cite{LMZ}, 
in reference to Yeadon's characterization of isometries on noncommutative
$L^p$-spaces for $1\leq p\not=2<\infty$ \cite{Y}.
In this section, we introduce the notion
of direct Yeadon type factorization and we discuss 
the relationship between the 
norm, the completely bounded norm and the $S^1$-bounded norm
of operators which admit such a factorization.

First we recall some prerequisite concepts and results. 
A Jordan homomorphism between von Neumann algebras $\M$ and $\N$ is a 
linear map $J\colon\M\to\N$ that preserves involution and the Jordan 
product $(x,y)\mapsto \frac{1}{2}(xy+yx)$. The interested reader is 
referred to \cite[Chapter 7]{HS}, \cite{S} and 
\cite[Exercises 10.5.21-10.5.31]{KR2} 
for information on these maps. 
We note for further use that any Jordan homomorphism is positive.

We assume that $(\mathcal{M},\tau_{\tiny{\M}})$ and $(\mathcal{N},\tau_{\tiny{\N}})$ 
are semifinite
and we let $1\leq p<\infty$. Following \cite{LMZ}, 
we say that an operator $T\colon L^p(\mathcal{M})\to L^p(\mathcal{N})$ 
has a Yeadon type factorization if there exist a $w^*$-continuous
Jordan homomorphism $J\colon\mathcal{M}\to\mathcal{N}$, 
a partial isometry $w\in\mathcal{N}$, 
and a positive operator $B$ affiliated with $\mathcal{N}$, 
which satisfy the following conditions:
\begin{enumerate}[(a)]
\item $w^{\ast}w=J(1)=s(B)$, the support projection of $B$;
\item  every spectral projection of $B$ commutes with $J(x)$, for all $x\in\mathcal{M}$;
\item $T(x)=w BJ(x)$ for all $x\in\mathcal{M}\bigcap L^p(\mathcal{M})$.
\end{enumerate}
In this case, $w$, $B$ and $J$  
are uniquely determined by $T$
and we call $(w, B, J)$ the Yeadon triple associated with $T$.

Yeadon's Theorem \cite{Y} asserts that if $p\neq2$, any isometry
$T\colon L^p(\mathcal{M})\to L^p(\mathcal{N})$ admits a Yeadon type factorization.

Following \cite{LMZ}, 
we say that an operator $T\colon L^p(\mathcal{M})\to L^p(\mathcal{N})$ 
is separating if it preserves disjointness of elements; that is, if 
for $x,y\in L^p(\mathcal{M})$ such that $x^*y=xy^*=0$, then we have
$T(x)^*T(y)=T(x)T(y)^*=0$. 
It is shown in \cite{HRW,LMZ} that $T$ admits a Yeadon type factorization 
if and only if it is separating.

Let $J:\M\to\N$ be a Jordan homomorphism and let $\D\subset\N$ be the
$W^*$-algebra generated by $J(\M)$. Then $J(1)$ is the unit of $\D$. By e.g.
\cite[Theorem 3.3]{S}, there exist projections 
$e$ and $f$ in the center of $\D$ such that
\begin{enumerate}[(i)]
\item $e+f=J(1)$.
\item $x\mapsto J(x)e$ is a $*$-homomorphism.
\item $x\mapsto J(x)f$ is an anti-$*$-homomorphism.
\end{enumerate}
Let $\N_1=e\N e$ and $\N_2=f\N f$. 
We let $\pi\colon \M\to\N_1$ and $\sigma\colon\M\to\N_2$ be defined by 
$\pi(x)=J(x)e$ and $\sigma(x)=J(x)f$, for all $x\in\M$. Then 
$J$ is valued in 
$\N_1\mathop{\oplus}\limits^{\infty}\N_2$
and $J(x)=\pi(x)+\sigma(x)$, for all $x\in\M$. As in \cite{LMZ}
we use the notations 
\begin{align}\label{cd}
J=\begin{pmatrix}
\pi&0\\
0&\sigma
\end{pmatrix}\quad\text{ and }\quad J(x)=\begin{pmatrix}
\pi(x)&0\\
0&\sigma(x)
\end{pmatrix}	
\end{align}
to refer to such a decomposition.

\begin{definition}\label{DYTF}
We say that a separating map $T\colon L^p(\mathcal{M})\to L^p(\mathcal{N})$
admits a direct (resp. anti-direct) 
Yeadon type factorization if the Jordan homomorphism of its
Yeadon triple is a $\ast$-homomorphism (resp. an anti-$\ast$-homomorphism).
\end{definition}

The above definition is partly motivated by a result due to 
Junge-Ruan-Sherman \cite[Proposition 3.2]{JRS} which asserts that
if $p\neq2$, an isometry
$T\colon L^p(\mathcal{M})\to L^p(\mathcal{N})$ admits a direct 
Yeadon type factorization if and only if $T$ is a $2$-isometry, if 
and only if $T$ is a complete isometry.

\begin{remark}\label{Tensorization}
Let
$T\colon L^p(\mathcal{M})\to L^p(\mathcal{N})$
be a separating map and let $(w,B,J)$ be its Yeadon triple.

\smallskip
(a)$\,$
The mapping $w^*T(\,\cdotp)$, which maps any $x\in \M\cap L^p(\M)$ 
to $BJ(x)$, is also a separating map. Its Yeadon triple is 
$(J(1), B, J)$.  Since $J$ is positive, $B$ is positive and $B$ commutes with
the range of $J$, the mapping 
$w^*T(\,\cdotp)$ is positive.

\smallskip
(b)$\,$ Assume that $J=\pi$ is a $\ast$-homomorphism,
so that $T$ has a direct 
Yeadon type factorization.
For any $n\geq 1$, $I_{M_n}\otimes \pi$ is a
$\ast$-homomorphism from $M_n\overline{\otimes}\mathcal{M}$
into $M_n\overline{\otimes}\mathcal{N}$. Hence 
$I_{S^p_n}\otimes T\colon L^p(M_n\overline{\otimes}\mathcal{M})\to 
L^p(M_n\overline{\otimes}\mathcal{N})$ admits a Yeadon type factorization.
Indeed the 
Yeadon triple of $I_{S^p_n}\otimes T$ is equal to $(I_n\otimes w, I_n\otimes B,
I_{M_n}\otimes \pi)$. It follows from (a) that in this case, 
$w^*T(\,\cdotp)$ is  completely positive.
\end{remark}

\begin{remark}\label{cdec}
Let $T\colon L^p(\M)\to L^p(\N)$ be a separating 
operator, with Yeadon triple $(w,B,J)$. 
Assume that $w=J(1)$, so that 
$$
T(x)=BJ(x),\qquad
x\in \M\cap L^p(\M).
$$
Consider a decomposition of $J$ as in (\ref{cd}). This induces a 
direct/anti-direct decomposition of $T$, as follows.

Recall
$\N_1=e\N e$ and $\N_2=f\N f$. Then $\N_1,\N_2$ are semifinite and
we have
$$
L^p(\N_1)\mathop{\oplus}\limits^p L^p(\N_2)
\,=\,L^p(\N_1\mathop{\oplus}\limits^{\infty}\N_2)
\subset L^p(\N).
$$
Set $B_1= Be$ and $B_2=Bf$. Since $B=BJ(1)$, we have $B=B_1+B_2$.
Moreover $B$ commutes with
the range of $J$, that is, $B$ is affiliated with $J(\M)'$. This implies that
$B$ commutes with $e$ and $f$. Consequently,
$B_1$ is affiliated with $\N_1$ and $B_2$ is affiliated with $\N_2$.
Now define
$$
T_1\colon L^p(\M)\longrightarrow L^p(\N_1)
\qquad\hbox{and}\qquad
T_2\colon L^p(\M)\longrightarrow L^p(\N_2)
$$
by setting 
$$
T_1(x)= T(x)e
\qquad\hbox{and}\qquad
T_2(x)= T(x)f,
\quad
x\in L^p(\M).
$$
Then
$$
T=T_1+T_2.
$$ 
Further
$T_1$ is a separating operator and its Yeadon triple is
equal to $(1_{\tiny{\N}_1}, B_1, \pi)$. Likewise
$T_2$ is a separating operator  and its Yeadon triple is
equal to $(1_{\tiny{\N}_2}, B_2, \sigma)$. In particular,
$T_1$ has a direct Yeadon type factorization
whereas $T_2$ has an anti-direct Yeadon type factorization.

In the case when $w\not=J(1)$, one can apply the following decomposition
principle to the mapping $w^*T(\,\cdotp)$ from Remark \ref{Tensorization} (a).
\end{remark}

\begin{proposition}\label{4.4}
Let $T\colon L^p(\M)\to L^p(\N)$ be a bounded operator with 
a direct Yeadon type factorization. Then $T$ is completely 
bounded and $\|T\|_{cb}=\|T\|$. 
\end{proposition}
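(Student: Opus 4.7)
The plan is to exploit the fact that, by Remark \ref{Tensorization}(b), $I_{S^p_n}\otimes T$ inherits a direct Yeadon type factorization with triple $(I_n\otimes w, I_n\otimes B, I_{M_n}\otimes J)$, so that proving $\|I_{S^p_n}\otimes T\|\le\|T\|$ reduces to a pointwise identity for the modulus $|T(x)|$ together with a suitable scalar inequality on positive elements of $L^1(\M)$.

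The key computation I would carry out first is that for $x\in\M\cap L^p(\M)$,
\begin{equation*}
|T(x)|^2 = J(x)^* B\, w^*w\, B J(x) = J(x)^* B J(1) B J(x) = J(x)^* B^2 J(x) = B^2 J(|x|^2),
\end{equation*}
where the first three equalities use $w^*w=J(1)=s(B)$ together with $BJ(1)=B$, and the last one uses both that $J$ is a genuine $*$-homomorphism (so $J(x)^*J(x)=J(x^*x)=J(|x|^2)$) and that $B$ commutes with the range of $J$. Functional calculus for commuting positive operators then yields $|T(x)|^p = B^p J(|x|^p)$, and hence
\begin{equation*}
\|T(x)\|_p^p = \tau_{\tiny{\N}}\bigl(B^p J(|x|^p)\bigr).
\end{equation*}
Since every $y\in\M\cap L^1(\M)^+$ equals $x^p$ for $x=y^{1/p}\in\M\cap L^p(\M)^+$, this reads as the scalar bound
\begin{equation*}
\tau_{\tiny{\N}}\bigl(B^p J(y)\bigr) \le \|T\|^p\, \tau_{\tiny{\M}}(y),\qquad y\in\M\cap L^1(\M)^+.
\end{equation*}

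For the matrix step I would apply the exact same derivation to $I_{S^p_n}\otimes T$. Using that $I_{M_n}\otimes J$ is again a $*$-homomorphism and that $I_n\otimes B$ commutes with its range, I get, for $X\in M_n\otimes(\M\cap L^p(\M))$, the identity $|(I_{S^p_n}\otimes T)(X)|^p = (I_n\otimes B^p)(I_{M_n}\otimes J)(|X|^p)$, so that writing $[y_{ij}]:=|X|^p\in M_n\otimes(\M\cap L^1(\M))$,
\begin{equation*}
\|(I_{S^p_n}\otimes T)(X)\|_p^p = \tau_{\tiny{\N},n}\bigl((I_n\otimes B^p)(I_{M_n}\otimes J)(|X|^p)\bigr) = \sum_{i=1}^n \tau_{\tiny{\N}}\bigl(B^p J(y_{ii})\bigr).
\end{equation*}
Summing the scalar bound over $i$ gives $\sum_i \tau_{\tiny{\N}}(B^p J(y_{ii})) \le \|T\|^p \sum_i \tau_{\tiny{\M}}(y_{ii}) = \|T\|^p\|X\|_p^p$. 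Density of $M_n\otimes(\M\cap L^p(\M))$ in $L^p(M_n\overline{\otimes}\M)$ then yields $\|I_{S^p_n}\otimes T\|\le\|T\|$, and since the reverse inequality is clear on a single matrix entry, $\|T\|_{cb}=\|T\|$.

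The main obstacle is the rigorous justification of the identity $|T(x)|^p=B^p J(|x|^p)$ in the presence of the unbounded positive operator $B$ affiliated with $\N$: all steps must be anchored in condition (b) of the Yeadon factorization, which says that every spectral projection of $B$ commutes with every $J(x)$, and in the fact that $J$, being a $*$-homomorphism, commutes with the positive functional calculus of bounded positive elements. This is also where the argument would break down for an anti-direct or a genuinely Jordan factorization, since $J(x)^*J(x)=J(|x|^2)$ then fails.
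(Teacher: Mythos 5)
Your proposal is correct and follows essentially the same route as the paper's proof: establish $|T(x)|^p=B^pJ(|x|^p)$, hence $\|T(x)\|_p^p=\tau_{\tiny{\N}}(B^pJ(|x|^p))$, then reduce the matrix estimate for $I_{S^p_n}\otimes T$ to the trace of the diagonal entries of $|X|^p$ and apply the scalar bound entrywise. The only cosmetic difference is that you isolate the inequality $\tau_{\tiny{\N}}(B^pJ(y))\leq\|T\|^p\tau_{\tiny{\M}}(y)$ as a named intermediate step, whereas the paper applies it directly in the form $\tau_{\tiny{\N}}(B^p\pi(x_{ii}))=\|T(x_{ii}^{1/p})\|_p^p\leq\|T\|^p\tau_{\tiny{\M}}(x_{ii})$.
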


\begin{proof}
Suppose that $T$ has a direct Yeadon type factorization,
with Yeadon triple $(w,B,\pi)$ and
fix some integer $n\geq 1$. Set $\pi_n= I_{M_n}\otimes \pi$,
$w_n = I_n\otimes w$ and $B_n= I_n\otimes B$.
By Remark \ref{Tensorization} (b),
$I_{S^p_n}\otimes T$ is separating with Yeadon triple equal
to $(w_n,B_n,\pi_n)$.

We note that for any $x\in \M\cap L^p(\M)$, we have
$\vert T(x)\vert^p = B^p \pi(\vert x\vert^p)$, hence 
\begin{equation}\label{Trace1}
\norm{T(x)}_p^p\,=\, \tau_{\tiny{\N}}\bigl(B^p \pi(\vert x\vert^p)\bigr).
\end{equation}

Let $y\in (M_n\overline{\otimes}\M)
\cap L^p(M_n\overline{\otimes}\M)$. Then similarly we have
$$
\|(I_{S^p_n}\otimes T)(y)\|_p^p\,=\,\tau_{\tiny{\N,n}}
\bigl(B_n^p \pi_n(\lvert y\rvert^p)\bigr).
$$
Write $x=\lvert y\rvert^p$ and decompose it as $x=[x_{ij}]_{1\leq i,j\leq n}$. Then
$$
\tau_{\tiny{\N,n}}\bigl(B_n^p
\pi_n(\lvert y\rvert^p)\bigr)=\sum_{i=1}^n\tau_{\tiny\N}\bigl(B^p
\pi(x_{ii})\bigr).
$$
For any $1\leq i\leq n$, we have
$$
\tau_{\tiny\N}\bigl(B^p \pi(x_{ii})\bigr) 
=\norm{T(x_{ii}^{\frac{1}{p}})}_p^p\,
\,\leq \norm{T}^p\norm{x_{ii}^{\frac{1}{p}}}_p^p
\,= \norm{T}^p \tau_{\tiny\M}(x_{ii}),
$$
by (\ref{Trace1}). We infer that 
$$
\|(I_{S^p_n}\otimes T)(y)\|_p^p \leq \norm{T}^p\,\sum_{i=1}^n
\tau_{\tiny\M}(x_{ii})\, =  
\norm{T}^p\,\tau_{\tiny{\M,n}}(x).
$$
This yields $\|(I_{S^p_n}\otimes T)(y)\|_p\,\leq \,\norm{T} \norm{y}_p$, which
proves that $T$ is completely bounded, with $\|T\|_{cb}=\|T\|$. 
\end{proof}

\begin{proposition}\label{4.2}
Let $T\colon L^p(\M)\to L^p(\N)$ be a bounded operator with 
a direct Yeadon type factorization. Then $T$ is $S^1$-bounded and $\|T\|_{S^1}=\|T\|$.
\end{proposition}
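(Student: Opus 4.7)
My plan is to derive the proposition by factoring $T$ through the auxiliary map $S := w^*T(\,\cdotp)$, which will turn out to be completely positive, and then invoking Theorem \ref{ls1}. Denote by $(w,B,\pi)$ the direct Yeadon triple of $T$.

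First I would establish the factorization $T = wS$ together with the norm equality $\|T\| = \|S\|$. Since $w^*w = \pi(1) = s(B)$ and $B$ commutes with the range of $\pi$, one gets $S(x) = w^*wB\pi(x) = B\pi(x)$ for $x \in \M\cap L^p(\M)$, and then $wS(x) = wB\pi(x) = T(x)$; using $\|w\|_\infty \leq 1$ twice yields $\|T\| = \|S\|$. Because $\pi$ is a $*$-homomorphism, Remark \ref{Tensorization}\,(b) gives that $S$ is completely positive, so Theorem \ref{ls1} provides $\|S\|_{S^1} = \|S\| = \|T\|$.

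Next I would verify that left multiplication by any contraction $v \in \N$ is $S^1$-contractive. For any factorization $x_{ij} = \sum_k a_{ik}b_{kj}$ in the sense of Definition \ref{S1}, one has $vx_{ij} = \sum_k (va_{ik})b_{kj}$, and
$$
\Bignorm{\sum_{i,k}(va_{ik})(va_{ik})^*}_p = \Bignorm{v\Bigl(\sum_{i,k}a_{ik}a_{ik}^*\Bigr)v^*}_p \leq \|v\|_\infty^2\,\Bignorm{\sum_{i,k}a_{ik}a_{ik}^*}_p
$$
by H\"older's inequality. Taking infima in (\ref{Norm}) yields $\|[vx_{ij}]\|_{L^p(\N;S^1)} \leq \|v\|_\infty\,\|[x_{ij}]\|_{L^p(\N;S^1)}$. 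Applied with $v = w$ and combined with the previous step, this gives $\|T\|_{S^1} \leq \|S\|_{S^1} = \|T\|$.

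For the reverse inequality $\|T\| \leq \|T\|_{S^1}$, I would use that placing an element into the $(1,1)$-entry defines an isometric embedding $L^p(\M) \hookrightarrow L^p(\M;S^1)$: the upper bound comes from the factorization $y = u|y|^{1/2}\cdot|y|^{1/2}$ obtained from the polar decomposition $y = u|y|$, and the lower bound from H\"older applied to any representation $y = \sum_k a_{1k}b_{k1}$. The only place where the direct factorization hypothesis is essential is in invoking Remark \ref{Tensorization}\,(b) to upgrade positivity of $w^*T$ to complete positivity; once this is granted, the remaining assembly is mechanical and I anticipate no substantial obstacle.
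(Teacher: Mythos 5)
Your proposal is correct and follows essentially the same route as the paper: pass to $U=w^*T(\,\cdotp)$, use Remark \ref{Tensorization}\,(b) to get complete positivity, apply Theorem \ref{ls1}, and recover $T=wU$. The only difference is that you spell out the two small facts the paper treats as immediate (that left multiplication by a contraction is $S^1$-contractive, and that $L^p(\M)$ embeds isometrically into $L^p(\M;S^1)$ via a single entry), and both of these are verified correctly.
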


\begin{proof}
Suppose that $T$ has a direct Yeadon type factorization,
with Yeadon triple $(w,B,\pi)$. By Remark \ref{Tensorization} (b),
$U : = w^*T(\,\cdotp)$ is completely positive. Hence by 
Theorem \ref{ls1}, $U$ is $S^1$-bounded, 
with $\|U\|_{S^1}=\|U\|$.
Since $wU(x)=T(x)$ for any $x\in L^p(\M)$, this immediately
implies that $T$ is also $S^1$-bounded, with $\|T\|_{S^1}=\|U\|_{S^1}$.
Further we have $\|T\|=\|U\|$, which yields the result.
\end{proof}

In the case when $\M,\N$ are hyperfinite, it follows
from \cite{P2, AK} that any completely positive 
map $T\colon L^p(\M)\to L^p(\N)$ is automatically
completely bounded, with $\norm{T}_{cb}=\norm{T}$.
We do not know if this holds true in general.
If this were true, Proposition \ref{4.4} would be a 
direct consequence of Remark \ref{Tensorization} (b).

\section{Direct Yeadon type factorization and isometries}
We proved in the previous section (Propositions  \ref{4.4} and
\ref{4.2}) that if a contraction $T\colon L^p(\M)
\to L^p(\N)$ admits a direct Yeadon type factorization,
then it is both completely contractive and $S^1$-contractive.
The purpose of this section is to establish converse statements
for isometries. Namely we will show that an isometry $T\colon L^p(\M)
\to L^p(\N)$ admits a direct Yeadon type factorization
provided that either $T$ is completely contractive and $p\not=2$,
or $T$ is $S^1$-contractive.

We need three preparatory lemmas.

\begin{lemma}\label{jan52}
Let $1\leq p<\infty$. 
Let $\M$ and $\N$ be semifinite von Neumann algebras 
and let $b\in L^p(\N)$. Consider a
matrix $[x_{ij}]_{1\leq i,j\leq n}$ of elements in $L^p(\M)$.
We have
\begin{align}\label{ineqs1}
\|[x_{ij}\otimes b]\|_{L^p(\tiny{\M\overline{\otimes}\N};S^1_n)}
\,=\,\norm{b}_p\,\|[x_{ij}]\|_{L^p(\tiny{\M};S^1_n)}.	
\end{align}
\end{lemma}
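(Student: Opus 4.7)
The proof splits into two inequalities. For the upper bound $\leq$, I would use the factorization description from Lemma \ref{P21}. Starting from any decomposition $x_{ij}=\sum_{k=1}^{m}a_{ik}b_{kj}$ with $a_{ik},b_{kj}\in L^{2p}(\M)$ given by that lemma, I would factorize $b$ via its polar decomposition $b=u|b|$ as $b=\beta_1\beta_2$, with $\beta_1=u|b|^{1/2}$ and $\beta_2=|b|^{1/2}$ in $L^{2p}(\N)$ and $\|\beta_1\|_{2p}=\|\beta_2\|_{2p}=\|b\|_p^{1/2}$. Setting $A_{ik}=a_{ik}\otimes\beta_1$ and $B_{kj}=b_{kj}\otimes\beta_2$ yields a decomposition $x_{ij}\otimes b=\sum_k A_{ik}B_{kj}$ in $L^{2p}(\M\overline{\otimes}\N)$; the row and column sums are positive tensors such as $\sum_{i,k}A_{ik}A_{ik}^*=\bigl(\sum_{i,k}a_{ik}a_{ik}^*\bigr)\otimes\beta_1\beta_1^*$, whose $L^p$-norms factor by the tensor identity~(\ref{1-tensor}). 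Passing to the infimum yields the $\leq$ inequality.

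For the lower bound $\geq$, I would first reduce to $b\in L^p(\N)^+$ via polar decomposition $b=u|b|$: since $1\otimes u^*\in\M\overline{\otimes}\N$ is a contraction, left multiplication by it is $S^1_n$-contractive on matrices over $L^p(\M\overline{\otimes}\N)$ (it preserves $L^{2p}$ row sums), and it converts $[x_{ij}\otimes b]$ into $[x_{ij}\otimes|b|]$ since $u^*u|b|=|b|$; the reverse inequality follows by applying $1\otimes u$. So I may assume $b\geq 0$ and $\|b\|_p=1$. Set $\omega=b^{p-1}\in L^{p'}(\N)^+$ for $p>1$ (so $\|\omega\|_{p'}=1$ and $\tau_\N(b\omega)=1$), and $\omega=s(b)\in\N$ for $p=1$ (combined with Remark \ref{p=1}). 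The strategy is duality: for any test element $[z_{ij}]\in L^{p'}(\M)\otimes M_n$ with factorization-based dual norm less than $1$, I would show that $[z_{ij}\otimes\omega]$ has dual norm at most $\|\omega\|_{p'}=1$ in the dual of $L^p(\M\overline{\otimes}\N;S^1_n)$, by the same tensor construction as in $\leq$ applied dually (factorize $\omega=\omega_1\omega_2$ with $\omega_i\in L^{2p'}(\N)^+$ of $2p'$-norm $1$, and use the naturality $\|[w_{ij}\otimes 1]\|_\delta=\|[w_{ij}]\|_\delta$ under $y\mapsto y\otimes 1$). The trace pairing then gives
$$
\sum_{ij}\tau_{\M\overline{\otimes}\N}\bigl((x_{ij}\otimes b)(z_{ji}\otimes\omega)\bigr)=\sum_{ij}\tau_\M(x_{ij}z_{ji})\cdot\tau_\N(b\omega)=\sum_{ij}\tau_\M(x_{ij}z_{ji}),
$$
and the supremum over $[z_{ij}]$ produces the desired bound $\|[x_{ij}]\|_{L^p(\M;S^1_n)}\leq\|[x_{ij}\otimes b]\|_{L^p(\M\overline{\otimes}\N;S^1_n)}$.

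The main obstacle is justifying this duality in full non-hyperfinite generality, since the cleanest identification $L^p(\M;S^1_n)^*=L^{p'}(\M)[M_n]$ from \cite{P5} relies on hyperfiniteness. I plan to bypass this by working directly with a H\"older-type pairing between $L^p(\M;S^1_n)$ and $L^{p'}(\M)\otimes M_n$ equipped with the factorization-based dual norm associated to~(\ref{eqnorm}), verifying the needed H\"older inequality by hand from the definitions. A plausible alternative, which appears no easier, would be to construct an explicit $S^1$-contractive left inverse of the completely positive map $T_b\colon x\mapsto x\otimes b$ (for $b\geq 0$) by a two-sided compression $z\mapsto(1\otimes b^{(p-1)/2})z(1\otimes b^{(p-1)/2})$ followed by a partial trace; the difficulty there is that the naive version lands in $L^1(\M;S^1_n)$ rather than $L^p(\M;S^1_n)$, so additional care is required to track $L^p$-regularity.
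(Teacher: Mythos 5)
Your upper bound $\leq$ is correct and is essentially what the paper dismisses as ``immediate from the definitions''; the reduction to $b\geq 0$ via $1\otimes u^*$ is also sound (though not needed). The problem is the lower bound, where your primary route has a genuine gap. The duality scheme requires \emph{two} halves: (a) a H\"older inequality, bounding the trace pairing of an element of $L^p(\M\overline{\otimes}\N;S^1_n)$ against a test matrix by the product of the two norms, and (b) the \emph{norming} statement that $\|[x_{ij}]\|_{L^p(\tiny{\M};S^1_n)}$ is attained (up to $\varepsilon$) as a supremum of such pairings over test matrices of dual norm at most $1$. Half (a) is the easy half and is what your ``H\"older-type pairing verified by hand'' would deliver; but the step of your argument that actually produces the inequality $\|[x_{ij}]\|_{L^p(\tiny{\M};S^1_n)}\leq\sup_z|\sum\tau(x_{ij}z_{ji})|$ is half (b), i.e.\ the full duality theorem $L^p(\M;S^1_n)^*\cong L^{p'}(\M)[M_n]$, which for non-hyperfinite $\M$ is exactly the statement you concede you cannot quote and do not prove. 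In addition, the tensor-stability claim $\|[z_{ij}\otimes\omega]\|_{dual}\leq\|[z_{ij}]\|_{dual}$ is asserted ``by the same tensor construction applied dually,'' but the dual norm is of supremum (injective) type rather than factorization-infimum type, so the construction you describe for the $\leq$ direction does not transfer as stated. As written, the proof of $\geq$ is incomplete.

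Ironically, the ``plausible alternative'' you dismiss as no easier is the paper's actual proof, and it avoids every difficulty you raise. The paper takes a norming element $c\in L^{p'}(\N)$ with $\norm{c}_{p'}=1$ and $\tau_{\tiny{\N}}(bc)=\norm{b}_p$, and shows that $T\colon z\mapsto z\otimes c$ from $L^{p'}(\M)$ to $L^{p'}(\M\overline{\otimes}\N)$ is decomposable with $\decnorm{T}\leq 1$: writing $c=u\vert c\vert$, the matrix $\begin{pmatrix}\vert c^*\vert & c\\ c^* & \vert c\vert\end{pmatrix}$ is positive, which produces an explicit completely positive $2\times 2$ dilation of $T$. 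The adjoint $T^*\colon L^p(\M\overline{\otimes}\N)\to L^p(\M)$ (the slice map against $c$) is then decomposable with $\decnorm{T^*}\leq 1$, hence $S^1$-contractive by (\ref{Dec-case}), and $T^*(x\otimes b)=\norm{b}_p\,x$ gives the inequality $\geq$ in one line. Passing through the adjoint of a decomposable map is precisely what sidesteps the ``$L^p$-regularity'' issue you worry about with the compression $z\mapsto(1\otimes b^{(p-1)/2})z(1\otimes b^{(p-1)/2})$: no compression is ever formed, only the dec-norm estimate on the $L^{p'}$ side. You should replace the duality argument by this left-inverse construction (or supply a complete proof of the duality of $L^p(\M;S^1_n)$ for general semifinite $\M$, which is a substantially harder task than the lemma itself).
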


\begin{proof} The case $p=1$ follows from 
Remark \ref{p=1}, so we may assume that 
$p\not=1$. Let $p'=\frac{p}{p-1}$ be the conjugate number of $p$. 
Let $b\in L^p(\N)$ and let
$c\in L^{p'}(\N)$ such that $\norm{c}_{p'}=1$ and
$\tau_{\tiny{\N}}(bc)=\norm{b}_p$. 
Define
$$
T\colon L^{p^{\prime}}(\M)\to 
L^{p^{\prime}}(\M\overline{\otimes}\N),\quad  
T(z)=z\otimes c.
$$
We claim that $T$ is decomposable, with $\norm{T}_{dec}\leq 1$,
see (\ref{DECNORM}) for the definition.
To check this, consider the polar decomposition
$c=u\vert c\vert$ of $c$. Then $\vert c^*\vert =u\vert c\vert u^*$.
In the space $L^{p'}(M_2\overline{\otimes} \N)$,
the matrix $\begin{pmatrix} \vert c\vert 
&\vert c\vert \\ \vert c\vert & \vert c\vert 
\end{pmatrix} = 
\begin{pmatrix} 1 &1 \\ 1 & 1
\end{pmatrix}\otimes \vert c \vert$ is positive, hence 
$$
C:=\, \begin{pmatrix} \vert c^*\vert & c \\ c^* & \vert c\vert
\end{pmatrix}
= \begin{pmatrix} u & 0 \\ 0 & 1
\end{pmatrix}
\begin{pmatrix} \vert c\vert &\vert c\vert \\ \vert c\vert & \vert c\vert
\end{pmatrix}
\begin{pmatrix} 1 & 0 \\ 0 & u^*
\end{pmatrix}\,\geq 0.
$$
Consequently the operator 
$$
L^{p'}(M_2\overline{\otimes}\M)\longrightarrow
L^{p'}(M_2\overline{\otimes}\M)\otimes L^{p'}(M_2\overline{\otimes}\N)
\,\subset\,L^{p'}(M_4\overline{\otimes}\M\overline{\otimes}\N)
$$
taking $X$ to $X\otimes C$ for any $X\in L^{p'}(M_2\overline{\otimes}\M)$
is completely positive. For any $r,x,y,s$
in $L^{p'}(\M)$, and $X=
\begin{pmatrix} r & x \\ y & s\end{pmatrix}$, the matrix 
$\begin{pmatrix} r\otimes \vert c^*\vert & x\otimes c \\ y\otimes c^*
& s\otimes \vert c\vert\end{pmatrix}$ is an extracted square matrix
of $X\otimes C$. We deduce that the mapping
$\Gamma\colon L^{p'}(M_2\overline{\otimes}\M)\to L^{p'}(M_2\overline{\otimes}\M\overline{\otimes}\N)$
defined by
$$
\Gamma\begin{pmatrix} r & x \\ y & s\end{pmatrix}\, =\, 
\begin{pmatrix} r\otimes \vert c^*\vert & x\otimes c \\ y\otimes c^*
& s\otimes \vert c\vert\end{pmatrix},
\qquad r,x,y,s\in L^{p'}(\M),
$$ 
is completely positive.
Since $r\mapsto r\otimes \vert c^*\vert$ and 
$s\mapsto s\otimes \vert c\vert$ are contractive from $L^{p'}(\M)$
into $L^{p'}(\M\overline{\otimes}\N)$, 
this proves the claim. 

Next the adjoint 
$T^*\colon L^p(\M\overline{\otimes}\N)\to 
L^p(\M)$ is also decomposable, with $\norm{T^*}_{dec}\leq 1$. 
By (\ref{Dec-case}), this implies that $T^*$ is $S^1$-contractive. 
The inequality $\geq$ in
(\ref{ineqs1}) follows since for any $x\in L^p(\M)$,
we have $T^*(x\otimes b)=\norm{b}_p\, x$. The reverse inequality 
$\leq$ in (\ref{ineqs1}) is immediate from the definitions. 
\end{proof}

The next result extends (\ref{DirectSum})
to $S^1$-valued spaces.

\begin{lemma}\label{Sum-S1}
Let $1\leq p<\infty$ and 
let $\N_1$ and $\N_2$ be semifinite von Neumann algebras. For any $n\geq 1$,
for any $[x_{ij}^1]_{1\leq i,j\leq n}$ in $L^p(\N_1;S^1_n)$ and for any 
$[x_{ij}^2]_{1\leq i,j\leq n}$ in $L^p(\N_2;S^1_n)$, we have
\begin{equation}\label{Sum-S1-bis}
\norm{[x_{ij}^1,x_{ij}^2]}_{L^p(\tiny{\N_1}
\mathop{\oplus}\limits^{\infty}\tiny{\N_2}
;S^1_n)}\,=\,\Bigl(
\norm{[x_{ij}^1]}_{L^p(\tiny{\N_1}
;S^1_n)}^p\,+\,\norm{[x_{ij}^2]}_{L^p(\tiny{\N_2}
;S^1_n)}^p \Bigr)^{\frac{1}{p}}.
\end{equation}
\end{lemma}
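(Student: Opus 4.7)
The plan is to prove the two inequalities in (\ref{Sum-S1-bis}) separately, in each case exploiting the characterization from Lemma \ref{P21} that the norm on $L^p(\M;S^1_n)$ can be computed as an infimum over \emph{finite} factorizations $x_{ij}=\sum_k a_{ik}b_{kj}$ in $L^{2p}(\M)$. The componentwise product structure on $\N_1\mathop{\oplus}\limits^{\infty}\N_2$, combined with (\ref{DirectSum}), will convert every $L^p$-norm of a sum of $aa^*$ or $b^*b$ type over the direct sum into a $p$-sum of the analogous norms on each summand. Write $N_j=\norm{[x^j_{ij}]}_{L^p(\tiny{\N_j};S^1_n)}$ for $j=1,2$.

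For the inequality $\leq$, I would start from $\epsilon$-nearly optimal finite factorizations $x^j_{ij}=\sum_{k}a^j_{ik}b^j_{kj}$ in $L^{2p}(\N_j)$ for $j=1,2$, and apply a standard homogeneity rescaling (replacing $a^j$ by $t_j a^j$ and $b^j$ by $t_j^{-1}b^j$) so that the row norm of $(a^j_{ik})$ and the column norm of $(b^j_{kj})$ both equal a common value $\alpha_j$ with $\alpha_j^{2}<N_j+\epsilon$. Then I would form the componentwise families $\tilde a_{ik}=(a^1_{ik},a^2_{ik})$ and $\tilde b_{kj}=(b^1_{kj},b^2_{kj})$ in $L^{2p}(\N_1\mathop{\oplus}\limits^{\infty}\N_2)$, padding with zeros so that the index $k$ ranges over a common finite set. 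A direct computation using (\ref{DirectSum}) shows that the row norm of $(\tilde a_{ik})$ and the column norm of $(\tilde b_{kj})$ both equal $(\alpha_1^{2p}+\alpha_2^{2p})^{1/(2p)}$. This gives the desired bound upon letting $\epsilon\to 0$.

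For the reverse inequality $\geq$, I would fix an $\epsilon$-nearly optimal finite factorization $(x^1_{ij},x^2_{ij})=\sum_{k}\tilde a_{ik}\tilde b_{kj}$ in $L^{2p}(\N_1\mathop{\oplus}\limits^{\infty}\N_2)$ provided by Lemma \ref{P21} applied to the direct-sum algebra, split each $\tilde a_{ik}=(a^1_{ik},a^2_{ik})$ and $\tilde b_{kj}=(b^1_{kj},b^2_{kj})$ into components, and observe that the $j$-th components give a valid factorization of $[x^j_{ij}]$ in $L^p(\N_j;S^1_n)$. Writing $\alpha_j,\beta_j$ for the corresponding row and column norms, one has $N_j\leq\alpha_j\beta_j$; meanwhile (\ref{DirectSum}) identifies $\alpha_1^{2p}+\alpha_2^{2p}$ with $\norm{(\tilde a_{ik})}_{L^{2p}(\tiny{\N_1\oplus\N_2};\{\ell^2_{\Ndb^2}\}_r)}^{2p}$, and similarly for $\beta_1^{2p}+\beta_2^{2p}$. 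A single application of the Cauchy--Schwarz inequality to $\sum_{j}\alpha_j^p\beta_j^p$ then yields $(N_1^p+N_2^p)^{1/p}\leq \norm{(\tilde a_{ik})}_{r}\norm{(\tilde b_{kj})}_{c}$, which is what is needed.

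I do not anticipate any substantive obstacle; the proof is essentially bookkeeping on norms. The one point requiring care is the exponent arithmetic in the direct-sum formula: squared row and column norms on $\N_1\mathop{\oplus}\limits^{\infty}\N_2$ are $L^p$-norms on a direct sum, hence decompose as $p$-sums by (\ref{DirectSum}), and it is this matching of exponents that produces \emph{exactly} the $p$-direct-sum expression on the right-hand side of (\ref{Sum-S1-bis}) rather than a $1$- or $\infty$-sum.
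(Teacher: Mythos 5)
Your proof is correct and follows essentially the same route as the paper's: $\varepsilon$-nearly optimal balanced finite factorizations obtained from Lemma \ref{P21}, combined componentwise and measured through (\ref{DirectSum}), which turns the row/column norms over the direct sum into $2p$-sums. The paper only writes out the inequality $\leq$ and declares the reverse ``similar''; your explicit splitting of a near-optimal factorization of the pair followed by Cauchy--Schwarz on $\sum_j\alpha_j^p\beta_j^p$ is precisely the intended completion of that step.
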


\begin{proof}
Let $\varepsilon>0$. By Lemma \ref{P21}, there exist an integer $m\geq 1$,
families $[a^1_{ik}]_{1\leq i\leq n,1\leq k\leq m}$ and 
$[b^1_{kj}]_{1\leq k\leq m,1\leq j\leq n}$ in $L^{2p}(\N_1)$, and
families $[a^2_{ik}]_{1\leq i\leq n,1\leq k\leq m}$ and 
$[b^2_{kj}]_{1\leq k\leq m,1\leq j\leq n}$ in $L^{2p}(\N_2)$
such that we have $x_{ij}^1=\sum_k a_{ik}^1 b_{kj}^1$ and 
$x_{ij}^2=\sum_k a_{ik}^2 b_{kj}^2$ for all $1\leq i,j\leq n$, as
well as norm estimates
$$
\norm{(a^1_{ik})_{i,k}}_{L^{2p}(\tiny{\N_1};\{\ell^2_{nm}\}_r)}
\,=\,\norm{(b^1_{kj})_{kj}}_{L^{2p}(\tiny{\N_1};\{\ell^2_{mn}\}_c)}
\,\leq\bigl(\norm{[x_{ij}^1]}_{L^p(\tiny{\N_1}
;S^1_n)}+\varepsilon\bigr)^\frac12
$$
and
$$
\norm{(a^2_{ik})_{i,k}}_{L^{2p}(\tiny{\N_2};\{\ell^2_{nm}\}_r)}
\,=\,\norm{(b^2_{kj})_{kj}}_{L^{2p}(\tiny{\N_2};\{\ell^2_{mn}\}_c)}
\,\leq\bigl(\norm{[x_{ij}^2]}_{L^p(\tiny{\N_2}
;S^1_n)}+\varepsilon\bigr)^\frac12
$$
Let $\N=\N_1 \mathop{\oplus}\limits^{\infty} \N_2$.
Using (\ref{DirectSum}), we have
\begin{align*}
\norm{(a^1_{ik}, a^2_{ik})_{i,k}}_{L^{2p}(\tiny{\N};\{\ell^2_{nm}\}_r)}\,
&=\,\Bignorm{\Bigl(
\sum_k a^1_{ik} a^{1 *}_{ik}\,,
\sum_k a^2_{ik} a^{2 *}_{ik}\Bigr)}_{L^p\tiny(\N)}^{\frac12}\\
&=\,\biggl(\Bignorm{
\sum_k a^1_{ik} a^{1 *}_{ik}}_{L^p\tiny(\N_1)}^p\, +\,
\Bignorm{
\sum_k a^2_{ik} a^{2 *}_{ik}}_{L^p\tiny(\N_2)}^p\biggr)^{\frac12}\\
&=\,\Bigl(
\norm{(a^1_{ik})_{i,k}}_{L^{2p}(\tiny{\N}_1;\{\ell^2_{nm}\}_r)}^{2p}
\,+\, \norm{(a^2_{ik})_{i,k}}_{L^{2p}(\tiny{\N}_2;\{\ell^2_{nm}\}_r)}^{2p}
\Bigr)^{\frac12}\\
&\leq \,\Bigl(\bigl(\norm{[x_{ij}^1]}_{L^p(\tiny{\N_1}
;S^1_n)}+\varepsilon\bigr)^p
+\bigl(\norm{[x_{ij}^2]}_{L^p(\tiny{\N_2}
;S^1_n)}+\varepsilon\bigr)^p\Bigr)^{\frac12}.
\end{align*}
Likewise,
$$
\norm{(b^1_{kj}, b^2_{kj})_{i,k}}_{L^{2p}(\tiny{\N};\{\ell^2_{mn}\}_c)}
\leq \,\Bigl(\bigl(\norm{[x_{ij}^1]}_{L^p(\tiny{\N_1}
;S^1_n)}+\varepsilon\bigr)^p
+\bigl(\norm{[x_{ij}^2]}_{L^p(\tiny{\N_2}
;S^1_n)}+\varepsilon\bigr)^p\Bigr)^{\frac12}.
$$
Since $(x_{ij}^1,x_{ij}^2) = \sum_k (a^1_{ik}, a^2_{ik})
(b^1_{kj}, b^2_{kj})$ for all $1\leq i,j\leq n$ and 
$\varepsilon>0$ is arbitrary, the above two estimates 
imply the inequality $\leq$ in (\ref{Sum-S1-bis}).
The proof of the reverse inequality is similar.
\end{proof}

The next result may be known to operator space specialists. We include a 
proof for the sake of completeness.

\begin{lemma}\label{5.5}
Let $1\leq p\leq\infty$, let $n\geq 2$ and 
let $t\colon S^p_n\to S^p_n$ denote the transposition operator. We have
\begin{itemize}
\item[(i)] $\|t:S^p_n\to S^p_n\|_{cb}=
\norm{I_{S^p_n}\otimes t\colon S^p_n[S^p_n]
\to S^p_n[S^p_n]}=\,n^{2 \vert\frac12 -\frac1p \vert}$;
\item[(ii)] $\|t:S^p_n\to S^p_n\|_{reg}=
\norm{t\otimes I_{S^1_n}\colon S^p_n[S^1_n]
\to S^p_n[S^1_n]}=\,n$.
\end{itemize}
\end{lemma}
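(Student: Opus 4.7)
The plan is to reduce both parts to classical facts about the transposition $t\colon M_n\to M_n$, using Pisier's theory to identify the vector-valued $L^p$-spaces concretely.

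For part (i), the key identification is $L^p(M_n)[S^p_n] \cong L^p(M_n\overline{\otimes}M_n) = S^p_{n^2}$ isometrically (a Fubini-type fact extending Proposition \ref{Pisier}, valid since $M_n$ is hyperfinite), under which $I_{S^p_n}\otimes t$ becomes the partial transposition $I_{M_n}\otimes t$ on $S^p_{n^2}$. I would then compute this norm by interpolation. At $p=2$ it permutes an orthonormal basis and is an isometry; at $p=\infty$, the pair $(F,\eta)$ with $F=\sum_{i,j}E_{ij}\otimes E_{ji}$ the swap (operator norm $1$) and $\eta = \sum_{i,j}E_{ij}\otimes E_{ij}$ a rank-one positive operator of operator norm $n$, related by $(I_{M_n}\otimes t)(F) = \eta$, yields the lower bound $n$, matched by the classical identity $\|t\|_{cb,M_n}=n$; duality transports the result to $p=1$. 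Complex interpolation of Schatten classes then gives the upper bound $n^{2|1/p-1/2|}$ at general $p$, and testing on the same two elements (with $\|F\|_p = n^{2/p}$ and $\|\eta\|_p = n$) provides the matching lower bound: use $F$ for $p\geq 2$ and $\eta$ for $p\leq 2$. The cb-norm equality $\|t\|_{cb} = \|I_{S^p_n}\otimes t\|$ follows by rerunning the same interpolation uniformly in $k$, since both endpoint bounds are independent of $k$.

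For part (ii), since $M_n$ is hyperfinite, Proposition \ref{3.8} together with Remark \ref{DecLp} identifies $\|t\|_{reg}$ with $\|t\|_{S^1}$ and with the $L^p$-decomposable norm $\norm{t}_{dec}$. For the upper bound $\norm{t}_{dec}\leq n$: the Choi matrix of $t$ is the swap $F$, whose Jordan decomposition $F = F_+ - F_-$ (into positive multiples of the projections onto the symmetric and antisymmetric subspaces) produces completely positive maps $\Phi_\pm\colon M_n\to M_n$ with $t = \Phi_+ - \Phi_-$ and $\Phi_+ + \Phi_- = {\rm tr}(\cdot)I_n =: \Psi$, whose $L^p(M_n)\to L^p(M_n)$-norm is exactly $n$ by H\"older. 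A direct check shows that $\begin{pmatrix}\Psi & t\\ t & \Psi\end{pmatrix}$ is CP at the $L^p$-level (using the decomposition into $\Phi_\pm$ and a sign-flip conjugation by $\mathrm{diag}(1,-1)$), giving $\norm{t}_{dec}\leq n$. For the matching lower bound $\|t\otimes I_{S^1_n}\|\geq n$, I test on $\xi := \eta \in S^p_n[S^1_n]$: Remark \ref{Positive} yields $\|\xi\|_{S^p_n[S^1_n]} = \|I_n\|_p = n^{1/p}$ since $\xi$ is positive with $\sum_i\xi_{ii} = I_n$, while $(t\otimes I_{S^1_n})(\xi) = F$. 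The upper bound $\|F\|_{S^p_n[S^1_n]} \leq n^{1+1/p}$ comes from (\ref{eqnorm}) applied to the factorization $F_{ij} = I_n\cdot E_{ji}\cdot I_n$ together with the Haagerup identity $\|[E_{ji}]\|_\delta = \decnorm{t} = n$ at the $C^*$-level, and the matching lower bound uses Pisier's duality $(S^p_n[S^1_n])^* = S^{p'}_n[M_n]$ by pairing $F$ against itself: the trace pairing equals ${\rm tr}(F^2) = n^2$, and $\|F\|_{S^{p'}_n[M_n]} \leq n^{1/p'}$ by the $M_n$-valued analog of (\ref{eqnorm}) (with the min-norm replacing $\delta$) applied to the same factorization together with $\|[E_{ji}]\|_{\min} = \|F\|_{\min} = 1$.

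The hardest step will be justifying the $M_n$-valued analog of (\ref{eqnorm}) for $S^{p'}_n[M_n]$ used in (ii), namely a factorization formula $\|\,\cdot\,\|_{S^{p'}_n[M_n]} = \inf\{\|a\|_{2p'}\|[y_{ij}]\|_{\min}\|b\|_{2p'}\}$. It reflects the projective/injective duality between the natural operator-space structures on $S^1$ and $M_n$ (the min-norm being the cb-norm of the associated map into $M_n$), and follows from Pisier's vector-valued $L^p$-theory applied at exponent $p'$ in the hyperfinite case, in the same manner that (\ref{eqnorm}) itself was derived from \cite[Corollary 12.4]{P3}. Should this direct route prove delicate at intermediate $p$, a backup is complex interpolation along the vector-valued $L^p$-scale between $p=1$ and $p=\infty$, where all the relevant norms admit clean closed-form descriptions and the claim reduces to explicit Schatten-norm computations for $F$.
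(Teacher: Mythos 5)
Your proof is correct in substance and ends up testing the very same two elements as the paper ($[E_{ij}]$ and $[E_{ji}]$, i.e.\ your $\eta$ and $F$), but the machinery is genuinely different. The paper computes all the relevant norms by writing $S^p_n[E]$ as a Haagerup tensor product $R_n(1/p)\otimes^h E\otimes^h R_n(1-1/p)$ and manipulating the spaces $R_n(\theta)$; you instead use the concrete identification $S^p_n[S^p_n]\simeq S^p_{n^2}$ and the partial-transpose picture for (i), Remark \ref{Positive} for the equality $\|[E_{ij}]\|_{S^p_n[S^1_n]}=n^{1/p}$ (a genuine shortcut the paper does not exploit), and duality against $S^{p'}_n[M_n]$ for the hard estimate $\|[E_{ji}]\|_{S^p_n[S^1_n]}\geq n^{1+1/p}$. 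Your upper bound in (ii), via the Choi-matrix decomposition $t=\Phi_+-\Phi_-$ with $\Phi_++\Phi_-={\rm tr}(\cdot\,)I_n$ and the chain $\norm{t}_{reg}=\norm{t}_{S^1}\leq\decnorm{t}\leq n$, also differs from the paper's (which interpolates regular norms between $p=1$ and $p=\infty$) and is more self-contained. The ``$M_n$-valued analogue of (\ref{eqnorm})'' that you single out as the hardest step is in fact not delicate: for finite $\M$ it is exactly Pisier's definition of the $L^{p'}(\M)[E]$-norm \cite[p.~41 and Theorem 3.8]{P5} applied with $E=M_n$, i.e.\ the same source the paper invokes with $E=S^1_n$ in the proof of Proposition \ref{Pisier}; combined with the factorization $E_{ji}=I_n\,E_{ji}\,I_n$ and $\bignorm{\sum_{i,j}E_{ij}\otimes E_{ji}}_{\min}=1$ it yields $\|[E_{ji}]\|_{S^{p'}_n[M_n]}\leq n^{1/p'}$ as you need.

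Two points should be nailed down in a final write-up. First, the value of the bracket $\langle[E_{ji}],[E_{ji}]\rangle$ depends on whether the duality $(S^p_n[S^1_n])^*=S^{p'}_n[M_n]$ is implemented by the trace pairing or by the parallel (entrywise) pairing on each tensor factor; under either consistent choice one does get $n^2$, but a mixed convention would give $n$ and destroy the estimate, so fix the convention explicitly. Second, your fallback of ``complex interpolation along the vector-valued scale'' can only produce upper bounds for the norm of a fixed element at intermediate $p$; lower bounds do not interpolate, so the duality argument (or the paper's Haagerup tensor product computation) cannot be dispensed with there.
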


\begin{proof}
We will use the the Haagerup tensor product 
$\mathop{\otimes}\limits^h$, the row and column
operator spaces $R_n$ and $C_n$, the interpolation spaces $R_n(\theta)
=(C_n,R_n)_\theta$ for $\theta\in[0,1]$, 
introduced in \cite{P6}, and the construction of 
operator space valued $S^p$-spaces from \cite[Chapter 1]{P5}. 
We will also use the crucial fact that the Haagerup tensor product
commutes with interpolation (see \cite[Theorem 2.3]{P6} for a precise statement).
We refer to the above references and to \cite{BLM,P3} for some background.

Let $(e_1,\ldots,e_n)$ be the standard basis of $\ell^2_n$. 
It follows from \cite[Theorem 1.1]{P5} that for any 
operator space $E$, the mapping $E_{ij}\otimes 
x\mapsto e_i\otimes x\otimes e_j$, $1\leq i,j\leq n$ and $x\in E$, 
uniquely extends to a completely isometric isomorphism 
\begin{align}\label{h2}
S_n^p[E]\simeq R_n\bigl(\tfrac{1}{p}\bigr)
\mathop{\otimes}\limits^h E\mathop{\otimes}\limits^h 
R_n\bigl(1-\tfrac{1}{p}\bigr).
\end{align}

\smallskip\noindent
$(i):$ First we note that $\|t\colon M_n\to M_n\|_{cb}=n$, see e.g. \cite[Proposition 2.2.7]{ER}. Since we have
$\|t\colon S^2_n\to S^2_n\|_{cb}=1$, we obtain 
by interpolation that
$$
\|t\colon S^p_n\to S^p_n\|_{cb}\leq n^{2 \vert\frac12 -\frac1p \vert}.
$$ 
We now turn to lower estimates.
Consider the matrix $[E_{ij}]$ in $S_n^p[S_n^p]$ 
and note that $I_{S^p_n}\otimes
t$ maps $[E_{ij}]$ to $[E_{ji}]$ and 
$[E_{ji}]$ to $[E_{ij}]$. 
Applying (\ref{h2}) with $E=S^p_n$ equipped with its canonical operator
space structure, we have isometric identifications
\begin{align*}
S^p_n[S^p_n] & \simeq R_n\bigl(\tfrac{1}{p}\bigr)
\mathop{\otimes}\limits^h R_n\bigl(\tfrac{1}{p}\bigr)
\mathop{\otimes}\limits^h 
R_n\bigl(1-\tfrac{1}{p}\bigr)
\mathop{\otimes}\limits^h 
R_n\bigl(1-\tfrac{1}{p}\bigr)\\
&\simeq R_{n^2}\bigl(\tfrac{1}{p}\bigr)
\mathop{\otimes}\limits^h 
R_{n^2}\bigl(1-\tfrac{1}{p}\bigr) \\
&\simeq S^p_{n^2}.
\end{align*}
In the first of these identifications,
$[E_{ij}]$ corresponds to
$\sum_{i,j} e_i\otimes e_i\otimes e_j\otimes e_j$, which
may be written as $\bigl(\sum_i e_i\otimes e_i\bigr)
\otimes\bigl(\sum_j e_j\otimes e_j\bigr)$.
Since the $e_i\otimes e_i$ are pairwise orthogonal 
in $\ell_{n^2}^2$, we deduce that 
$$
\|[E_{ij}]\|_{S_n^p[S_n^p]}\,=\,
\Bignorm{\sum_{i=1}^n e_i\otimes 
e_i}_{R_{n^2}(\frac{1}{p})}
\,\Bignorm{\sum_{j=1}^n e_j\otimes 
e_j}_{R_{n^2}(1-\frac{1}{p})} = n^\frac12 n^\frac12 = n.
$$
Similarly, $[E_{ji}]$ corresponds to
$\sum_{i,j} e_i\otimes e_j\otimes e_i\otimes e_j$.
Further $\{e_i\otimes e_j\,:\, 1\leq i,j\leq n\}$ is 
an orthonormal basis of $\ell^2_{n^2}$. Hence  
through the identification of $S^p_n[S^p_n]$ with $S^p_{n^2}$, 
$[E_{ji}]$ corresponds to the identity map on $\ell^2_{n^2}$.
Its $S^p$-norm is equal to $n^{\frac{2}{p}}$, hence
$$
\|[E_{ji}]\|_{S_n^p[S_n^p]}=n^{\frac{2}{p}}.
$$
These computations show that 
$\|I_{S^p_n}\otimes t
\colon S_n^p[S_n^p]\to S_n^p[S_n^p]\| \geq n^{2\lvert 1/2-1/p\rvert}$.
Since the cb-norm of $t$ is greater than or equal to
$\|I_{S^p_n}\otimes t
\colon S_n^p[S_n^p]\to S_n^p[S_n^p]\|$, this proves the double equality in (i).

\smallskip
\noindent
$(ii):$	Note that
$$
\|t\colon M_n\to M_n\|_{reg}= \|t\colon M_n\to M_n\|_{cb} =n 
$$
and that $\|t:S^1_n\to S^1_n\|_{reg}= \|t\colon M_n\to M_n\|_{reg}$ by duality.
Hence by interpolation,
$$
\|t\colon S^p_n\to S^p_n\|_{reg}\leq n.
$$ 

We now turn to lower estimates.
We have $S^1_n\simeq R_n\mathop{\otimes}\limits^h C_n$ completely 
isometrically hence applying (\ref{h2}) with $E=S^1_n$,  we have
an isometric identification
$$
S^p_n[S^1_n]
\simeq R_n\bigl(\tfrac{1}{p}\bigr)
\mathop{\otimes}\limits^h R_n
\mathop{\otimes}\limits^h 
C_n
\mathop{\otimes}\limits^h 
R_n\bigl(1-\tfrac{1}{p}\bigr).
$$
According to e.g. \cite[Proposition 1.5.14 (6) $\&$ (8)]{ER}, we have
$$
R_n\bigl(\tfrac{1}{p}\bigr)
\mathop{\otimes}\limits^h R_n
\simeq \bigl(C_n
\mathop{\otimes}\limits^h R_n, R_n\mathop{\otimes}\limits^h R_n\bigr)_{\frac1p}
\simeq (M_n,S^2_n)_{\frac1p} = S^{2p}_n.
$$
Likewise,
$$
C_n
\mathop{\otimes}\limits^h 
R_n\bigl(1-\tfrac{1}{p}\bigr)\simeq 
\bigl(C_n\mathop{\otimes}\limits^h C_n, C_n
\mathop{\otimes}\limits^h R_n\bigr)_{1-\frac1p} 
\simeq (S^2_n,M_n)_{1-\frac1p}  = S^{2p}_n.
$$
Hence arguing as in the proof of (i), we have
\begin{align*}
\|[E_{ij}]\|_{S_n^p[S_n^1]}\,&=\,
\Bignorm{\sum_{i=1}^n e_i\otimes e_i}_{R_n(\frac1p)\mathop{\otimes}\limits^h R_n}
\Bignorm{\sum_{j=1}^n e_j\otimes e_j}_{C_n\mathop{\otimes}\limits^h R_n(1-\frac{1}{p})}\\
&=\,\bignorm{I_n\colon
\ell_n^2\to \ell_n^2}_{S_n^{2p}}^2 
=n^{\frac{1}{p}}.
\end{align*}
Next using as in (i) the correspondance between $[E_{ji}]$ and 
$\sum_{i,j} e_i\otimes e_j\otimes e_i\otimes e_j$, 
as well as the functorial property 
of the Haagerup tensor product (see e.g. \cite[1.5.5]{BLM}), we have
\begin{align*}
\Bignorm{\sum_{i,j=1}^n 
e_i\otimes e_j\otimes & e_i\otimes e_j}_{R_n
\mathop{\otimes}\limits^h R_n \mathop{\otimes}\limits^h C_n
\mathop{\otimes}\limits^h C_n}\\ & \leq
\bignorm{I_n\colon R_n\bigl(\tfrac{1}{p}\bigr)\to R_n}_{cb}
\bignorm{I_n\colon C_n\to R_n\bigl(1-\tfrac{1}{p}\bigr)}_{cb}\,
\|[E_{ji}]\|_{S_n^p[S_n^1]}.
\end{align*}
Using the facts that $CB(C_n,R_n)\simeq S^2_n$ and
$CB(C_n,C_n)\simeq M_n$ (see e.g. 
\cite[Section 4]{ER0}),
we both have $\bignorm{I_n\colon C_n\to R_n}_{cb} = n^\frac12$
and $\bignorm{I_n\colon C_n\to C_n}_{cb} = 1$. Hence 
$$
\bignorm{I_n\colon R_n\bigl(\tfrac{1}{p}\bigr)\to R_n}_{cb}
\leq n^{\frac12(1-\frac1p)},
$$
by interpolation. Likewise,
$$
\bignorm{I_n\colon C_n\to R_n\bigl(1-\tfrac{1}{p}\bigr)}_{cb}
\leq n^{\frac12(1-\frac1p)}
$$
Further $R_n
\mathop{\otimes}\limits^h R_n \mathop{\otimes}\limits^h C_n
\mathop{\otimes}\limits^h C_n\simeq R_{n^2}
\mathop{\otimes}\limits^h C_{n^2} \simeq S^1_{n^2}$ hence
$$
\Bignorm{\sum_{i,j=1}^n 
e_i\otimes e_j\otimes e_i\otimes e_j}_{R_n
\mathop{\otimes}\limits^h R_n \mathop{\otimes}\limits^h C_n
\mathop{\otimes}\limits^h C_n} \,
=\,\bignorm{I_{n^2}\colon \ell^2_{n^2}\to \ell^2_{n^2}}_1 = n^2.
$$
These estimate yield
$$
\|[E_{ji}]\|_{S_n^p[S_n^1]}\geq n^{1+\frac1p}.
$$
Hence we obtain that
$$
\norm{t\otimes I_{S^1_n}\colon S^p_n[S^1_n]
\to S^p_n[S^1_n]}\geq \frac{n^{1+\frac1p}}{n^\frac{1}{p}}=n.
$$
Since $\norm{t\colon S^p_n\to S^p_n}_{reg}\geq \norm{t\otimes I_{S^1_n}\colon S^p_n[S^1_n]
\to S^p_n[S^1_n]}$, (ii) follows at once.
\end{proof}

\begin{theorem}\label{jan5}
Let $T\colon L^p(\mathcal{M})\to L^p(\mathcal{N})$ be an isometry.
The following statements are equivalent.
\begin{itemize}
\item[(i)] $T$ admits a direct Yeadon type factorization.
\item[(ii)] $T$ is $S^1$-contractive.
\end{itemize}
\end{theorem}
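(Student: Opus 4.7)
The direction (i) $\Rightarrow$ (ii) is immediate from Proposition \ref{4.2}: any isometry admitting a direct Yeadon type factorization satisfies $\norm{T}_{S^1}=\norm{T}=1$.

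For the converse, I first establish that $T$ is separating. For $p\neq 2$, this is Yeadon's theorem, while for $p=2$, Remark \ref{l1} yields that $T$ is $\ell^1$-contractive and then \cite[Theorem 4.2]{LMZ} gives the separating property. Extracting the Yeadon triple $(w,B,J)$ and decomposing $J=\pi\oplus\sigma$ as in (\ref{cd}), the splitting principle of Remark \ref{cdec} (applied to $w^*T$ and then multiplied back by $w$) writes $T=T_1+T_2$ with $T_1\colon L^p(\M)\to L^p(\N_1)$ direct and $T_2\colon L^p(\M)\to L^p(\N_2)$ anti-direct. It remains to prove $T_2=0$.

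Suppose for contradiction that $\sigma\neq 0$; then $\M$ cannot be entirely abelian on the support of $\sigma$ (since Jordan homomorphisms out of abelian algebras are automatically $*$-homomorphisms). Applying Lemma \ref{Contain-M2} to a suitable central summand, choose a nonzero $*$-homomorphism $\gamma\colon M_2\to \M$ with image in $L^1(\M)$ and with $\sigma\circ\gamma\neq 0$. Set $e=\gamma(E_{11})$ and form the witness matrix $X=[\gamma(E_{ij})]_{1\leq i,j\leq 2}\in L^p(\M;S^1_2)$. Combining Lemma \ref{last} (identifying $\gamma(M_2)$ with $M_2\otimes e \subset M_2\overline{\otimes}(e\M e)$ via a trace-preserving $*$-isomorphism), Lemma \ref{jan52}, Proposition \ref{Pisier}, and the computation $\norm{[E_{ij}]}_{S^p_2[S^1_2]}=2^{1/p}$ from the proof of Lemma \ref{5.5}(ii) yields $\norm{X}_{L^p(\M;S^1_2)}=2^{1/p}\|e\|_p$. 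Because $\sigma$ is anti-multiplicative, the map $\tilde\gamma(E_{ij}):=\sigma(\gamma(E_{ji}))$ is a $*$-homomorphism $M_2\to\N_2$, and $T_2(\gamma(E_{ij}))=wB\tilde\gamma(E_{ji})$, so the matrix $[T_2(\gamma(E_{ij}))]$ is the \emph{transpose} of $[wB\tilde\gamma(E_{ij})]$. Splitting $\norm{(T\otimes I_{S^1_2})(X)}^p$ via Lemma \ref{Sum-S1}, bounding $\norm{(T_1\otimes I)(X)}$ from above using Proposition \ref{4.2} applied to $T_1$, and bounding the anti-direct piece from below using the estimate $\norm{[E_{ji}]}_{S^p_2[S^1_2]}=2^{1+1/p}$ from Lemma \ref{5.5}(ii) together with the isometry identity $\|T_1(\gamma(E_{ii}))\|_p^p+\|T_2(\gamma(E_{ii}))\|_p^p=\|\gamma(E_{ii})\|_p^p$ (which prevents $T_2$ from being too small on the diagonal), I expect to force $\norm{(T\otimes I_{S^1_2})(X)}_{L^p(\N;S^1_2)}>\norm{X}_{L^p(\M;S^1_2)}$, contradicting $S^1$-contractivity.

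The main obstacle I anticipate is transferring Lemma \ref{5.5}(ii) from the bare matrix $[E_{ij}]$ to the ``dressed'' matrix $[wB\tilde\gamma(E_{ij})]$ in $L^p(\N_2;S^1_2)$ involving the partial isometry $w$ and the unbounded positive operator $B$; this will require another application of Lemma \ref{last} to $\tilde\gamma$, Lemma \ref{3.7} to reduce to the corner $\sigma(e)\N\sigma(e)$, and careful bookkeeping of $B$ as a multiplier commuting with $\tilde\gamma$. A secondary subtlety is arranging $\sigma\circ\gamma\neq 0$, which may force one to localize $\gamma$ within the support projection of $\sigma$ rather than picking it arbitrarily in $\M$.
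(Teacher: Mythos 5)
Your overall architecture matches the paper's: establish the separating property, split $T=T_1+T_2$ into direct and anti-direct pieces via (\ref{cd}) and Remark \ref{cdec}, locate a copy of $M_2$ via Lemma \ref{Contain-M2}, and use the failure of the transposition to be $S^1$-contractive (Lemma \ref{5.5}(ii)) together with the isometry normalization to kill the anti-direct part. However, the central technical step is left as an acknowledged ``anticipated obstacle'' rather than carried out: you never actually compute or bound $\|[T_1(\gamma(E_{ij}))]\|_{L^p(\N_1;S^1_2)}$ and $\|[T_2(\gamma(E_{ij}))]\|_{L^p(\N_2;S^1_2)}$, and without those two estimates there is no contradiction. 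The paper resolves exactly this difficulty by \emph{not} working with the dressed matrices inside $L^p(\M;S^1_2)$ at all. Instead it first proves the theorem for $\M=M_n$, and in the general case composes: $\delta^{-1/p}\,T\circ\gamma$ is again an $S^1$-contractive isometry defined on $L^p(M_2)$, so the matrix-algebra case applies to it directly. In the matrix-algebra case, Lemma \ref{last} and Lemma \ref{TP-maps} produce trace-preserving $*$-isomorphisms $\rho_\pi,\rho_\sigma$ that normalize $T_1$ and $T_2$ into $x\mapsto x\otimes b_1$ and $x\mapsto t(x)\otimes b_2$; since $\rho_\pi,\rho_\sigma$ are completely positive, Theorem \ref{ls1} makes them $S^1$-contractive, which converts the exact norms $\|b_1\|_p\|[x_{ij}]\|$ and $\|b_2\|_p\|[t(x_{ij})]\|$ (via Lemma \ref{jan52}) into \emph{lower} bounds for $\|[T_1(x_{ij})]\|$ and $\|[T_2(x_{ij})]\|$. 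This is the mechanism you would need to supply, and it is not a routine bookkeeping matter: the partial isometry $w$ and the unbounded $B$ disappear only because the whole analysis is transferred through $\rho_\sigma$ rather than performed on the dressed entries.

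There is also a logical slip in your setup. The goal is not ``$T_2=0$'' and the contradiction hypothesis should not be ``$\sigma\neq 0$'': the decomposition (\ref{cd}) is not canonical, and $J$ can be a $*$-homomorphism while $\sigma\neq0$ (e.g.\ when the relevant part of $J(\M)$ is abelian, $\sigma$ is simultaneously multiplicative and anti-multiplicative). The correct target, as in the paper, is that $\M_2=\ker(\sigma)^{\perp}$ is abelian; one assumes it is not, applies Lemma \ref{Contain-M2} \emph{inside $\M_2$} (where $\sigma$ is injective, guaranteeing $\sigma\circ\gamma\neq0$ automatically — this is the ``secondary subtlety'' you flag, and it is essential, not secondary), and then concludes that $\sigma\circ\gamma$ is both multiplicative and anti-multiplicative and injective, forcing $M_2$ to be commutative, a contradiction. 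Your single witness $X=[\gamma(E_{ij})]$ would in fact suffice numerically once the two norm estimates are in place (the identity $\|b_1\|_p^p+\|b_2\|_p^p=1$ from the isometry property, combined with $\|[E_{ij}]\|_{S^p_2[S^1_2]}=2^{1/p}$ and $\|[E_{ji}]\|_{S^p_2[S^1_2]}=2^{1+1/p}$, forces $b_2=0$), but as written the proposal is a plan with its hardest step unexecuted and its contradiction hypothesis misstated.
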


\begin{proof}
The implication ``$(i)\Rightarrow (ii)$" follows from Proposition \ref{4.2}
so we only need to prove ``$(ii)\Rightarrow (i)$". 

We first show this implication in the case when $\M=M_n$, with $n\geq 2$. 
Let $T\colon L^p(M_n)\to L^p(\N)$ be an isometry and assume
that $T$ is $S^1$-contractive. By Remark \ref{l1}, \cite[Theorem 4.2]{LMZ} and
Yeadon's Theorem,
$T$ admits a Yeadon type factorisation.
Let $(w,B,J)$ be its Yeadon triple. 
Changing $T$ into $w^*T(\,\cdotp)$, see Remark \ref{Tensorization} (a),
we can assume that $w=J(1)$.
Consider a decomposition 
$J=\begin{pmatrix}\pi&0\\0&\sigma
\end{pmatrix}$ as in (\ref{cd}). We aim at showing  that $\sigma=0$.

Let us apply Remark \ref{cdec} to $T$. In the sequel 
we use the elements 
$\N_1,\N_2$, $B_1,B_2$ and 
$$
T_1\colon L^p(M_n)\longrightarrow L^p(\N_1),\quad
T_2\colon L^p(M_n)\longrightarrow L^p(\N_2)
$$ 
from this remark. By construction we have $T_1(x)=B_1\pi(x)$
and $T_2(x)=B_2\sigma(x)$ for any $x\in L^p(M_n)$.

Applying Lemma \ref{last} to the unital
$*$-homomorphism $\pi\colon M_n\to \N_1$,
we obtain a projection $\epsilon_1$ in $\N_1$ and a 
bijective $*$-homomorphism 
$\rho_{\pi}:\N_1\to M_n\overline{\otimes}
(\epsilon_1\N_1\epsilon_1)$ such that
$$ 
\left(\rho_{\pi}\circ\pi\right)(x)=x\otimes\epsilon_1,\qquad
x\in M_n,
$$
and $\rho_{\pi}$ is trace preserving. By Lemma \ref{TP-maps}, 
$\rho_{\pi}$
induces an isometry (still denoted by)
$$
\rho_{\pi}\colon L^p(\N_1)\longrightarrow L^p(M_n\overline{\otimes}
(\epsilon_1\N_1\epsilon_1))\,\simeq\, 
S^p_n\otimes L^p(\epsilon_1\N_1\epsilon_1).
$$

We have $B_1=T_1(I_n)$, hence $B_1\in L^p(\N_1)$. 
Further for any $x\in L^p(M_n)$, we have
\begin{align*}
\left(\rho_{\pi}\circ T_1\right)(x)&=\rho_{\pi}\left(B_1\pi(x)\right)\\
&=\rho_{\pi}(B_1)\rho_{\pi}(\pi(x))\\
&=\rho_{\pi}(B_1)(x\otimes \epsilon_1).
\end{align*}
Since $B_1\pi(x)=\pi(x)B_1$, a similar computation shows that we also have
$\left(\rho_{\pi}\circ T_1\right)(x)=(x\otimes \epsilon_1)\rho_{\pi}(B_1)$.
This shows that $\rho_{\pi}(B_1)$ commutes with $x\otimes\epsilon_1$ 
for any $x\in L^p(M_n)$. Consequently
there exists $b_1$ in $L^p(\epsilon_1\N_1\epsilon_1)$ 
such that $\rho_{\pi}(B_1)=I_n\otimes b_1$. Then the above computation
shows that
\begin{equation}\label{b1}
\left(\rho_{\pi}\circ T_1\right)(x) = x\otimes b_1,\qquad x\in L^p(M_n).
\end{equation}

Recall that we let $t\colon M_n\to M_n$ denote the transposition
map. The mapping $\sigma\circ t\colon M_n\to \N_2$ is a 
unital
$*$-homomorphism. Hence arguing as above, we obtain 
a projection $\epsilon_2$ in $\N_2$, a trace
preserving bijective $*$-homomorphism
$\rho_\sigma\colon \N_2\to M_n\overline{\otimes}(\epsilon_2\N_2\epsilon_2)$,
inducing an isometry
$$
\rho_\sigma\colon L^p(\N_2)\longrightarrow 
L^p(M_n\overline{\otimes}(\epsilon_2\N_2\epsilon_2))\simeq S^p_n \otimes 
L^p(\epsilon_2\N_2\epsilon_2),
$$
and some $b_2$ in $L^p(\epsilon_2\N_2\epsilon_2)$, such that
\begin{equation}\label{b2}
\left(\rho_{\sigma}\circ T_2\right)(x)= t(x)\otimes b_2,
\qquad x\in L^p(M_n).
\end{equation}

Observe that
$\rho_{\pi}\colon L^p(\N_1)\to L^p(M_n\overline{\otimes}
(\epsilon_1\N_1\epsilon_1))$ and 
$\rho_{\sigma}\colon L^p(\N_2)\to L^p(M_n\overline{\otimes}
(\epsilon_2\N_2\epsilon_2))$ are completely positive. Hence by 
Theorem \ref{ls1}, they are $S^1$-contractive.

Let $m\geq 1$ and let $[x_{ij}]_{1\leq i,j\leq m}$ in $S^p_n[S^1_m]$.
Since 
$\rho_{\pi}$ is $S^1$-contractive, we have
\begin{align*}
\|[\rho_{\pi}\circ T_1(x_{ij})]\|_{L^p
(M_n\overline{\otimes}(\epsilon_1\tiny{\N_1}\epsilon_1)
;S^1_m)}\leq\|[T_1(x_{ij})]\|_{L^p(\tiny{\N_1};S^1_m)}.
\end{align*}
On the other hand, using (\ref{b1}) and
Lemma \ref{jan52}, we have 
$$
\|[\rho_{\pi}\circ T_1(x_{ij})]\|_{L^p(
M_n\overline{\otimes}(\epsilon_1\tiny{\N_1}\epsilon_1)
;S^1_m)}=\|[x_{ij}\otimes b_1]\|_{L^p(
M_n\overline{\otimes}(\epsilon_1\tiny{\N_1}\epsilon_1)
;S^1_m)} = \|[x_{ij}]\|_{S^p_n[S^1_m]}\|b_1\|_p.
$$
Hence we obtain that
$$
\|b_1\|_p\|[x_{ij}]\|_{S^p_n[S^1_m]}
\leq\|[T_1(x_{ij})]\|_{L^p(\tiny{\N_1};S^1_m)}.
$$
Similarly, using (\ref{b2}), we have
$$
\|b_2\|_p\|[t(x_{ij})]\|_{S_n^p[S^1_m]}
\leq\|[T_2(x_{ij})]\|_{L^p(\tiny{\N_2};S^1_m)}.
$$
Taking the $p$-th powers and summing the above inequalities, we obtain that
\begin{align*}
\|b_1\|_p^p\|[x_{ij}]\|_{S^p_n[S^1_m]}^p+&\|b_2\|_p^p \|
[t(x_{ij})]\|_{S^p_n[S^1_m]}^p \\
& \leq\, \|
[T_1(x_{ij})]\|_{L^p(\tiny{\N_1};S^1_m)}^p+
\|[T_2(x_{ij})]\|_{L^p(\tiny{\N_2};S^1_m)}^p.
\end{align*}
According to Lemma \ref{Sum-S1}, the right-hand side
in the above inequality coincides
with $\|[T(x_{ij})]\|_{L^p(\tiny{\N};S^1_m)}^p$.
Since $T$ is assumed $S^1$-contractive, we infer that
\begin{equation}\label{p-sum}
\|b_1\|_p^p\|[x_{ij}]\|_{S^p_n[S^1_m]}^p+\|b_2\|_p^p \|
[t(x_{ij})]\|_{S^p_n[S^1_m]}^p \leq 
\|[x_{ij}]\|_{S^p_n[S^1_m]}^p.
\end{equation}

Using (\ref{b1}) and (\ref{b2}) again, we note that
for any $x\in S^p_n$, 
\begin{align*}
\norm{T(x)}_p^p 
& = \norm{T_1(x)}_p^p + \norm{T_2(x)}_p^p\\
& = \norm{x\otimes b_1}_p^p + \norm{t(x)\otimes b_2}_p^p,
\end{align*}
and hence
\begin{equation}\label{sep=iso}
\norm{T(x)} = \norm{x}_p^p\bigl(\norm{b_1}_p^p + \norm{b_2}_p^p\bigr).
\end{equation}
Since $T$ is an isometry, this implies that 
$$
\|b_1\|_p^p+\|b_2\|_p^p=1.
$$
Replacing $\|b_1\|_p^p$ by $(1- \|b_2\|_p^p)$ in (\ref{p-sum}),
we obtain that
$$
\|b_2\|_p\|[t(x_{ij})]\|_{S^p_n[S^1_m]}
\leq\|b_2\|_p\|[x_{ij}]\|_{S_n^p[S^1_m]}	
$$
for any $m\geq 1$ and any $[x_{ij}]_{1\leq i,j\leq m}$ in $S^p_n[S^1_m]$.
By Lemma \ref{5.5} (ii),
the above inequality holds only if $b_2=0$. In this case, 
we have $\sigma=0$, and hence $J$ is a $*$-homomorphism.

We now consider the general case. We let 
$T\colon L^p(\mathcal{M})\to L^p(\mathcal{N})$ be an isometry
and assume that  $T$ is $S^1$-contractive. As in the first part of the proof,
this implies that $T$ has a Yeadon type factorisation. Let
$J\colon\M\to\N$ be the Jordan homomorphism in the Yeadon triple 
of $T$ and let
$J=\begin{pmatrix}\pi&0\\0&\sigma
\end{pmatrix}$ be a decomposition of $J$ as in (\ref{cd}).
Let $\mathcal{M}_1 =\text{Ker}(\sigma)$. 
Since $\sigma$ is $w^*$-continuous, $\M_1$ is a
$w^*$-closed ideal of $\M$. Hence we have a direct sum decomposition
$$
\M=\mathcal{M}_1\mathop{\oplus}\limits^\infty\mathcal{M}_2.
$$
Moreover $\sigma_{|{\mathcal{M}_2}}$ is one-to-one.
To prove that $J$ is a $*$-homomorphism, 
it suffices to show that $\mathcal{M}_2$ is abelian.

If not, then
by Lemma \ref{Contain-M2}, there 
exists a non zero $*$-homomorphism
$\gamma\colon M_2\to \M_2$ taking values in
$\M_2\cap L^1(\M_2)$.  Let $\tau'=\tau_{\tiny\M}\circ\gamma \colon M_2\to \Cdb$.
Then $\tau'$ is a non zero
trace on $M_2$ hence there exists $\delta>0$
such that $\tau'=\delta tr_2$. This readily implies that  
$$
\delta^{-\frac1p}\gamma\colon L^p(M_2)\longrightarrow L^p(\M_2) 
$$
is an isometry. Further $\delta^{-\frac1p}\gamma$ is completely positive.
Hence by Theorem \ref{ls1}, $\delta^{-\frac1p}\gamma$ is $S^1$-contractive. 
By composition, we obtain that 
$\delta^{-\frac1p} T\circ \gamma$ is an $S^1$-contractive isometry
from $L^p(M_2)$ into $L^p(\N)$. According to the first part of this proof, 
$\delta^{-\frac1p}T\circ \gamma$ has therefore a direct Yeadon type factorization.
We observe that the Jordan homomorphism of its Yeadon triple
is equal to $J\circ \gamma$. The latter is therefore multiplicative,
hence 
$\sigma\circ \gamma$ is multiplicative. Since $\sigma\circ \gamma$
also is anti-multiplicative, we actually have
$$
\sigma\circ \gamma (ab) = [\sigma\circ \gamma (b)][
\sigma\circ \gamma(a)]=\sigma\circ \gamma(ba)
$$
for any $a,b\in M_2$. However $\sigma\circ \gamma$ is one-to-one,
hence the above property implies that
$ab=ba$ for any $a,b\in M_2$, a contradiction.
Hence $\mathcal{M}_2$ is abelian as expected, which concludes the proof.
\end{proof}

\begin{remark}
Let $1\leq p<\infty$ and let $\N$
be a semifinite 
von Neumann algebra. 
The argument in the first part of
the proof of Theorem \ref{jan5} shows that for any $n\geq 1$
and for any non zero separating map $T\colon S^p_n\to L^p(\N)$,
the operator $\norm{T}^{-1} T$ is an isometry. Indeed this follows
from (\ref{sep=iso}).

Likewise for any Hilbert space $\H$ and 
for any non zero separating map $T\colon S^p(\H)\to L^p(\N)$,
the operator $\norm{T}^{-1} T$ is an isometry.
\end{remark}

\begin{theorem}\label{5.6}
Let $1\leq p\neq 2<\infty$ and let 
$T\colon L^p(\mathcal{M})\to L^p(\mathcal{N})$
be an isometry. The following statements are equivalent.
\begin{itemize}
\item[(i)] $T$ admits a direct Yeadon type factorization.
\item[(ii)] $T$ is completely contractive.
\end{itemize} 
\end{theorem}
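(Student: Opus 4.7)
The implication (i) $\Rightarrow$ (ii) is Proposition \ref{4.4}. For the converse, the plan is to follow the structure of the proof of Theorem \ref{jan5}, substituting complete contractivity for $S^1$-contractivity and invoking part (i) of Lemma \ref{5.5} in place of part (ii).

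First I would treat the matricial case $\M=M_n$ for some $n\geq 2$. Since $p\neq 2$, Yeadon's Theorem provides a Yeadon triple $(w,B,J)$ for $T$. Replacing $T$ by $w^*T(\,\cdotp)$ as in Remark \ref{Tensorization}(a), I reduce to the case $w=J(1)$, decompose $J=\pi\oplus\sigma$ and split $T=T_1+T_2$ as in Remark \ref{cdec}. Applying Lemma \ref{last} to the unital $*$-homomorphism $\pi$ and to the unital $*$-homomorphism $\sigma\circ t$, I obtain trace-preserving bijective $*$-homomorphisms $\rho_\pi,\rho_\sigma$ and elements $b_1,b_2$ such that
$$
(\rho_\pi\circ T_1)(x)=x\otimes b_1
\qquad\hbox{and}\qquad
(\rho_\sigma\circ T_2)(x)=t(x)\otimes b_2,\quad x\in L^p(M_n).
$$
By Lemma \ref{TP-maps}, $\rho_\pi$ and $\rho_\sigma$ extend to complete isometries on the corresponding $L^p$-spaces.

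Fix $m\geq 1$ and a matrix $[x_{ij}]_{1\leq i,j\leq m}$ of elements of $L^p(M_n)=S^p_n$, regarded as an element of $S^p_m[S^p_n]$. Using (\ref{1-tensor}), (\ref{DirectSum}) and the above identities, complete contractivity of $T$ yields
$$
\|b_1\|_p^p\,\|[x_{ij}]\|_{S^p_m[S^p_n]}^p\,+\,\|b_2\|_p^p\,\|[t(x_{ij})]\|_{S^p_m[S^p_n]}^p \,\leq\, \|[x_{ij}]\|_{S^p_m[S^p_n]}^p.
$$
Since $T$ is an isometry, applying this computation to a rank-one tensor (as in (\ref{sep=iso})) gives $\|b_1\|_p^p+\|b_2\|_p^p=1$. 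Substituting and cancelling, if $b_2\neq 0$ then
$$
\bignorm{I_{S^p_m}\otimes t\colon S^p_m[S^p_n]\to S^p_m[S^p_n]}\,\leq\,1
$$
for every $m\geq 1$, i.e., $\|t\colon S^p_n\to S^p_n\|_{cb}\leq 1$. This contradicts Lemma \ref{5.5}(i), which gives $\|t\|_{cb}=n^{2|1/2-1/p|}>1$ precisely because $p\neq 2$ and $n\geq 2$. Hence $b_2=0$, so $\sigma=0$, and $J=\pi$ is a $*$-homomorphism.

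For the general case I would proceed exactly as in the final part of the proof of Theorem \ref{jan5}. Write $\M=\M_1\mathop{\oplus}\limits^\infty\M_2$ with $\M_1=\ker(\sigma)$, so that $\sigma_{|\M_2}$ is injective; it then suffices to show that $\M_2$ is abelian. If it is not, Lemma \ref{Contain-M2} furnishes a non-zero $*$-homomorphism $\gamma\colon M_2\to \M_2\cap L^1(\M_2)$ and, after rescaling by some $\delta>0$, the map $\delta^{-1/p}\gamma$ is trace-preserving, hence extends to a complete isometry $L^p(M_2)\to L^p(\M_2)$ by Lemma \ref{TP-maps}. The composition $\delta^{-1/p}T\circ\gamma$ is then a completely contractive isometry $L^p(M_2)\to L^p(\N)$, and by the matricial case its Jordan homomorphism $J\circ\gamma$ is multiplicative. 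Thus $\sigma\circ\gamma$ is simultaneously multiplicative and anti-multiplicative; since it is injective ($M_2$ is simple so $\gamma$ is injective, and $\sigma_{|\M_2}$ is injective by construction), $M_2$ must be commutative, a contradiction. Hence $\M_2$ is abelian, so $\sigma_{|\M_2}$, being an anti-$*$-homomorphism into a $w^*$-algebra whose $\sigma$-image is commutative, coincides with a $*$-homomorphism, and $J$ is multiplicative on all of $\M$.

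The main obstacle is the matricial step: one must verify that complete contractivity, once combined with the isometry identity $\|b_1\|_p^p+\|b_2\|_p^p=1$, cleanly isolates the cb-norm of the transposition on $S^p_n$. The role of the hypothesis $p\neq 2$ is then forced upon us by Lemma \ref{5.5}(i), whose lower bound $n^{2|1/2-1/p|}$ is strictly greater than $1$ exactly in this regime; this is the whole reason the result fails at $p=2$ and why the $S^1$-contractive criterion of Theorem \ref{jan5} is needed to cover that case.
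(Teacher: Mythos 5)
Your proposal is correct and follows essentially the same route as the paper: the authors likewise reduce to the matricial case, reuse the objects $T_1,T_2,\rho_\pi,\rho_\sigma,b_1,b_2$ from the proof of Theorem \ref{jan5}, replace the $S^1$-valued estimates by (\ref{1-tensor}) and (\ref{DirectSum}), and invoke Lemma \ref{5.5}(i) in place of part (ii) to force $b_2=0$. The general case is then handled exactly as you describe, via Lemma \ref{Contain-M2} and the observation that $\delta^{-1/p}\gamma$ is a complete isometry.
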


\begin{proof}
The implication ``$(i)\Rightarrow (ii)$" follows from Proposition \ref{4.4}
so we only need to prove ``$(ii)\Rightarrow (i)$". It turns out that
the proof of the similar implication in Theorem \ref{jan5} applies 
for this case, up to a few changes that we now explain.

Assume first that $\M=M_n$, with $n\geq 2$, and
consider $T_1, T_2, \rho_\pi,\rho_\sigma, b_1,b_2$ given by
the proof of Theorem \ref{jan5}. By Lemma \ref{TP-maps},
$\rho_{\pi}\colon L^p(\N_1)\to L^p(M_n\overline{\otimes}
(\epsilon_1\N_1\epsilon_1))$ and 
$\rho_{\sigma}\colon L^p(\N_2)\to L^p(M_n\overline{\otimes}
(\epsilon_2\N_2\epsilon_2))$ are complete isometries.
Further for any $m\geq 1$ and any $[x_{ij}]_{1\leq i,j\leq m}$
in $S^p_m[S^p_n]$, we have
$$
\norm{[x_{ij}\otimes b_1}_{L^p(M_m\overline{\otimes}
M_n\overline{\otimes}(\epsilon_1\tiny{N_1}\epsilon_1))}
\,=\, \norm{[x_{ij}}_{S^p_m[S^p_n]}\norm{b_1}_p,
$$
by (\ref{1-tensor}). Hence 
$$
\|b_1\|_p\|[x_{ij}]\|_{S^p_m[S^p_n]}
\leq\|[T_1(x_{ij})]\|_{L^p(M_m\overline{\otimes}\tiny{\N_1})}.
$$
Similarly
$$
\|b_2\|_p\|[x_{ij}]\|_{S^p_m[S^p_n]}
\leq\|[T_2(x_{ij})]\|_{L^p(M_m\overline{\otimes}\tiny{\N_2})}.
$$
Moreover by (\ref{DirectSum}),
$$
\norm{[T(x_{ij})]}^p_{L^p(M_m\overline{\otimes}\tiny{\N})}
\,=\, 
\norm{[T_1(x_{ij})]}^p_{L^p(M_m\overline{\otimes}\tiny{\N_1})}\,+\,
\norm{[T_2(x_{ij}]}^p_{L^p(M_m\overline{\otimes}\tiny{\N_2})}.
$$
Then using Lemma \ref{5.5} (i), 
the argument in the proof Theorem \ref{jan5} shows that 
$b_2=0$ and hence that $T$ has a direct Yeadon type factorization.

In the general case, the proof of Theorem \ref{jan5} applies almost verbatim,
using the simple fact that $\delta^{-\frac1p}\gamma$ is a complete isometry.
\end{proof}

\begin{remark}\label{ex0}
Let
$n\geq 2$ and consider $T\colon S^p_n\mathop{\oplus}\limits^p S^p_n \to 
S^p_n\mathop{\oplus}\limits^p S^p_n$  defined by 
$$
T(x,y) =\bigl(x,n^{-\frac1p}t(x)\bigr),\qquad
x,y\in S^p_n.
$$
Then $T$ is a separating map and by Lemma \ref{5.5}, we have 
$\norm{T}=\norm{T}_{S^1}=\norm{T}_{cb}$. However 
$T$ does not have a direct Yeadon type factorization.
This shows that Theorems \ref{jan5} and \ref{5.6} cannot hold true if we remove the
isometric assumption on $T$.
\end{remark}

\begin{remark}\label{ex0} 
Let $T\colon L^p(\mathcal{M})\to L^p(\mathcal{N})$ be an isometry.
The proof of Theorem \ref{jan5} actually shows that 
$T$ admits a direct Yeadon type factorization if and only if
$T$ is $S^1_2$-contractive, that is, 
$$
\bignorm{T\otimes I_{S^1_2}\colon L^p(\M; S^1_2)\longrightarrow L^p(\N; S^1_2)}\leq 1.
$$
Likewise if $p\not=2$, the proof of Theorem \ref{5.6} 
shows that 
$T$ admits a direct Yeadon type factorization if and only if
$T$ is $2$-contractive.
\end{remark}

Note that  
Theorem \ref{5.6} and the above remark extend \cite[Proposition 3.2]{JRS}.
Theorem \ref{jan5} can be regarded as a variant of the latter. Its main
feature is that it also applies to $p=2$. We emphasize this in the next statements.

\begin{corollary}\label{JRS-L2}
An isometry $T\colon L^2(\M)\to L^2(\N)$ 
admits a direct Yeadon type factorization if and only if
it is $S^1$-contractive.
\end{corollary}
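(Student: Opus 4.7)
The proof is essentially a one-line appeal to Theorem \ref{jan5}: that theorem establishes the equivalence ``admits a direct Yeadon type factorization $\iff$ $S^1$-contractive'' for \emph{any} isometry $T\colon L^p(\M)\to L^p(\N)$ with $1\leq p<\infty$, so specializing to $p=2$ yields the corollary verbatim. The point of stating this separately is its contrast with \cite[Proposition 3.2]{JRS}, which gives the analogous equivalence via complete contractivity but requires $p\neq 2$.

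The forward direction is immediate from Proposition \ref{4.2}: any bounded operator with a direct Yeadon type factorization is $S^1$-bounded with $\|T\|_{S^1}=\|T\|$, so an isometry of this type is automatically $S^1$-contractive.

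For the converse, I would simply invoke Theorem \ref{jan5} applied at $p=2$, but let me trace which ingredients are actually used there in this case, since Yeadon's theorem itself does \emph{not} apply at $p=2$. The argument begins by noting that an $S^1$-contractive $T$ is in particular $\ell^1$-contractive (Remark \ref{l1}); hence by \cite[Theorem 4.2]{LMZ}, $T$ is separating, and so, by the Hong--Ray--Wang / Le Merdy--Zadeh characterization quoted in Section 1, admits a Yeadon type factorization $(w,B,J)$. The non-trivial step is then to promote the Jordan homomorphism $J$ to a genuine $*$-homomorphism, which is exactly the content of the second half of the proof of Theorem \ref{jan5}: after replacing $T$ by $w^*T(\,\cdotp)$ and decomposing $J=\begin{pmatrix}\pi&0\\0&\sigma\end{pmatrix}$, one reduces to $\M=M_n$ via composition with the completely positive (hence $S^1$-contractive, by Theorem \ref{ls1}) isometry $\delta^{-1/p}\gamma$ supplied by Lemma \ref{Contain-M2}, uses Lemma \ref{last} to represent $\pi$ and $\sigma\circ t$ in tensor form, and then applies Lemma \ref{5.5}(ii) (the transposition having regular norm $n$ on $S^p_n$) together with Lemma \ref{Sum-S1} to force the anti-multiplicative summand $\sigma$ to vanish. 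Nothing in this chain of arguments uses $p\neq 2$, which is precisely why the $S^1$-framework succeeds at $p=2$ where the complete contraction framework of \cite{JRS} does not.

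Accordingly, the actual text of the proof can just be: \emph{``Apply Theorem \ref{jan5} with $p=2$.''} The only step whose legitimacy at $p=2$ deserves a parenthetical remark is the invocation of \cite[Theorem 4.2]{LMZ} in place of Yeadon's theorem, since it is this result (and not Yeadon's) that supplies the Yeadon type factorization of a separating $L^2$-isometry.
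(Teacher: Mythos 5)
Your proposal is correct and matches the paper exactly: the corollary is stated without a separate proof precisely because it is the case $p=2$ of Theorem \ref{jan5}, whose statement and proof are uniform in $1\leq p<\infty$. Your parenthetical observation that the Yeadon type factorization at $p=2$ is supplied by Remark \ref{l1} together with \cite[Theorem 4.2]{LMZ} rather than by Yeadon's theorem is also exactly how the proof of Theorem \ref{jan5} handles that step.
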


\begin{corollary}\label{cpi}
Any completely positive isometry $T\colon L^2(\M)\to L^2(\N)$ 
admits a direct Yeadon type factorization.
\end{corollary}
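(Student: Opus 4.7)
The plan is to derive Corollary \ref{cpi} as an immediate consequence of two earlier results: Theorem \ref{ls1} on $S^1$-boundedness of completely positive maps, and Corollary \ref{JRS-L2} characterizing $S^1$-contractive isometries on $L^2$.

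First I would invoke Theorem \ref{ls1}: since $T\colon L^2(\M)\to L^2(\N)$ is completely positive, it is $S^1$-bounded with
\[
\|T\|_{S^1}\,=\,\|T\|.
\]
Because $T$ is assumed to be an isometry, $\|T\|=1$, and hence $\|T\|_{S^1}\leq 1$. In other words, $T$ is $S^1$-contractive.

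Next I would apply Corollary \ref{JRS-L2}, which states that an isometry $L^2(\M)\to L^2(\N)$ admits a direct Yeadon type factorization if and only if it is $S^1$-contractive. The previous step provides exactly the hypothesis required, so the conclusion is immediate.

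There is no real obstacle here: both ingredients have already been established, and the only thing to verify is that the hypotheses of Corollary \ref{JRS-L2} are met via Theorem \ref{ls1}. The content of the statement lies not in its proof but in the combination of the two nontrivial results it rests upon, in particular the $L^2$-case of Corollary \ref{JRS-L2} which is the main novelty of the paper.
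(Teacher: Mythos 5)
Your proposal is correct and follows exactly the paper's own argument: the paper deduces the corollary from Theorem \ref{ls1} (completely positive implies $S^1$-bounded with $\norm{T}_{S^1}=\norm{T}$, hence $S^1$-contractive for an isometry) together with Theorem \ref{jan5}, of which Corollary \ref{JRS-L2} is just the $p=2$ restatement you invoke.
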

\begin{proof}
This follows from Theorem \ref{ls1} and Theorem \ref{jan5}.
\end{proof}

\begin{remark}
Assume here that $\M,\N$ are semifinite and hyperfinite von Neumann algebras.
In the case when $p\not=2$, Theorem \ref{jan5} follows from Theorem \ref{5.6},
by Proposition \ref{3.8} and \cite[Proposition 2.2]{P2}. Moreover the
$L^2$-case of Theorem \ref{jan5}, and hence
Corollaries \ref{JRS-L2} and \ref{cpi}, have a much simpler proof.
Indeed under the hyperfinite assumption, suppose that $T\colon L^2(\mathcal{M})\to L^2(\mathcal{N})$ 
is an $S^1$-contractive isometry. By Proposition \ref{3.8},
$T$ is completely regular with $\norm{T}_{reg}\leq 1$. Applying (\ref{Reg-E})
with the specific operator space $E=S^2_2[{\rm Max}(\ell^1_2)]$ we obtain that 
\begin{equation}\label{Simple}
\bignorm{T\otimes I_{S^2_2}\otimes I_{\ell^1_2}\colon
L^2(\M)\bigl[S^2_2[{\rm Max}(\ell^1_2)]\bigr]\longrightarrow 
L^2(\N)\bigl[S^2_2[{\rm Max}(\ell^1_2)]\bigr]}\leq 1.
\end{equation}
According to \cite[Theorem 1.9]{P5}, we have a Fubini type isometric identification
between $L^2(\M)\bigl[S^2_2[{\rm Max}(\ell^1_2)]\bigr]$
and $L^2(M_2\overline{\otimes}\M)
[{\rm Max}(\ell^1_2)]$. Combining with \cite[(7)]{LMZ}, we then have 
$$
L^2(\M)\bigl[S^2_2[{\rm Max}(\ell^1_2)]\bigr]\simeq L^2(M_2\overline{\otimes}\M;\ell^1_2).
$$
We have a similar result for $\N$. Consequently 
(\ref{Simple}) implies that 
$$
I_{S^2_2}\otimes T\colon L^2(M_2\overline{\otimes}\M)\longrightarrow
L^2(M_2\overline{\otimes}\N)
$$
is $\ell^1_2$-contractive. Further $L^2(M_2\overline{\otimes}\M)$ (resp. 
$L^2(M_2\overline{\otimes}\N)$) coincides with the Hilbertian tensor product
of $S^2_2$ and $L^2(\M)$ (resp. $L^2(\N)$). Hence 
$I_{S^p_2}\otimes T$ is an isometry.
It therefore follows from \cite[Theorem 4.2]{LMZ} that 
$I_{S^2_2}\otimes T$ admits a Yeadon type factorization. 
By \cite[Theorem 3.6]{HRW}, this implies that $T$ admits a direct Yeadon type factorization.
\end{remark}

\vskip 0.5cm
\noindent
{\bf Acknowledgement.} 
The work leading to this paper started whilst the second author was visiting 
``Laboratoire de Math\'ematiques de Besan\c con" (LmB). She greatly acknowledges 
LmB for hospitality and excellent working condition.  The first named author is 
supported by the French ``Investissement d'Avenir" program, project
ISITE-BFC (contract ANR-15-IDEX-03).


\end{document}